\date{}
\theoremstyle{plain}
\newtheorem{thm}{Theorem}[section]
\newtheorem{lem}[thm]{Lemma}
\newtheorem{pro}[thm]{Proposition}
\theoremstyle{remark}
\theoremstyle{definition}
\newcommand{\tr}{\mathbb{R}}
\def\sln{\operatorname{SL}(n)}
\def\gln{\operatorname{GL}(n)}
\def\lin{\operatorname{lin}}
    \newcommand{\rmnum}[1]{\romannumeral #1}
    \newcommand{\Rmnum}[1]{\expandafter\@slowromancap\romannumeral #1@}
\numberwithin{equation}{section}
\begin{document}
\title
{$L_p$ Minkowski Valuations on polytopes}

\author[]{Jin Li$^{1,2}$}
\author[]{Gangsong Leng$^1$}
\address[address1]{Department of Mathematics, Shanghai University, Shanghai 200444, China}
\address[address2]{Institut f\"{u}r Diskrete Mathematik und Geometrie, TU Wien, Wien 1040 Austria}

\email[Jin Li]{\href{mailto: Jin Li
<lijin2955@gmail.com>}{lijin2955@gmail.com}}
\email[Gangsong Leng]{\href{mailto: Gangsong Leng
<lenggangsong@163.com>}{lenggangsong@163.com}}

\begin{abstract}
For $1 \leq p < \infty$, Ludwig, Haberl and Parapatits classified $L_p$ Minkowski valuations intertwining the special linear group with additional conditions such as homogeneity and continuity. In this paper,a complete classification of $L_p$ Minkowski valuations intertwining the special linear group on polytopes without any additional conditions is established for $p \geq 1$ including $p = \infty$. For $n=3$ and $p=1$, there exist valuations not mentioned before.
\end{abstract}

\subjclass[2010]{52A20, 52B45}

\keywords{$L_\infty$ Minkowski valuation, $L_\infty$ projection body, $L_p$ Minkowski valuation, function-valued valuation, $\sln$ contravariant, $\sln$ covariant}

\thanks{}

\maketitle
\section{Introductions}
Let $\mathcal {K}_o ^n$ be the set of convex bodies (i.e., compact convex sets) in $\mathbb{R}^n$ containing the \text{origin}, $\mathcal {P}_o ^n$ the set of polytopes in $\mathbb{R}^n$ containing the origin and $\mathcal {T}_o^n$ the set of simplices in $\mathbb{R}^n$ \text{containing} the origin as one of their vertices.

For $1 \leq p \leq \infty$ and $K,L \in \mathcal {K}_o ^n$, the \emph{$L_p$ Minkowski sum} of $K$ and $L$ is defined by its support function as
\begin{align}\label{205}
h_{K +_p L}(x)= (h_K (x) ^p + h_L (x)^p)^{1/p}, ~~x \in \mathbb{R}^n.
\end{align}
Here $h_K$ is the support function of $K$; see Section \ref{s2}. When $p = \infty$, the definition (\ref{205}) should be interpreted as $h_{K +_\infty L}(x) = h_K (x) \vee h_L (x)$, the maximum of $h_K (x)$ and $h_L (x)$. When $p=1$, the definition (\ref{205}) gives the ordinary Minkowski addition.

An \emph{$L_p$ Minkowski valuation} is a function $Z: \mathcal {P}_o ^n \rightarrow \mathcal {K}_o ^n$ such that
\begin{align}\label{107}
    Z(K \cup L) +_p Z(K \cap L) = ZK +_p ZL,
\end{align}
whenever $K,L,K \cup L,K \cap L \in \mathcal {P}_o ^n$. In some cases, we will just consider valuations defined on $\mathcal {T}_o ^n$ that means (\ref{107}) holds whenever $K,L,K \cup L,K \cap L \in \mathcal {T}_o ^n$.

For $1 \leq p < \infty$, Ludwig \cite{Lud05}, Haberl \cite{Hab12b} and Parapatits \cite{Par14a}, \cite{Par14b} classified $L_p$ Minkowski valuations intertwining the special linear group, $\sln$, with some additional conditions such as homogeneity and continuity.

A map $Z$ from $\mathcal{K}_o ^n$ to the power set of $\mathbb{R}^n$ is called $\sln$ contravariant if
$$Z (\phi K) = \phi ^{-t} ZK$$
for any $K \in \mathcal{K}_o ^n$ and any $\phi \in \sln$. The map $Z$ is called $\sln$ covariant if
$$Z (\phi K) = \phi ZK$$
for any $K \in \mathcal{K}_o ^n$ and any $\phi \in \sln$.
Notice that $\{ o \}$ is the only subset of $\mathbb{R}^n$ invariant under all $\sln$ transforms. Thus if $Z$ is $\sln$ contravariant (or covariant), then
\begin{align}\label{37}
Z \{ o \} = \{ o \}.
\end{align}

Generalizing results for homogeneous or translation invariant valuations by Ludwig \cite{Lud02b,Lud05}, Haberl \cite{Hab12b} and Parapatits \cite{Par14a}, \cite{Par14b} established the following classification theorem.
\begin{thm}[Haberl \cite{Hab12b} and Parapatits \cite{Par14a}]\label{Lpcontra}
Let $n \geq 3$. A map $Z : \mathcal{P}_o ^n \to \mathcal {K}_o ^n$ is an $\sln$ contravariant Minkowski valuation if and only if there exist constants
$c_1,c_2,c_3 \in \mathbb{R}$ with $c_1 \geq 0$ and $c_1 +c_2 +c_3 \geq 0$ such that
$$ZP = c_1 \Pi P + c_2 \Pi_o P + c_3 \Pi_o (-P)$$
for every $P \in \mathcal{P}_o ^n$.

For $1 < p < \infty$, a map $Z : \mathcal{P}_o ^n \to \mathcal {K}_o ^n$ is an $\sln$ contravariant $L_p$ Minkowski valuation if and only if there exist constants
$c_1,c_2 \geq 0$ such that
$$ZP = c_1 \hat{\Pi}_p^+ P +_p c_2 \hat{\Pi}_p ^- P$$
for every $P \in \mathcal{P}_o ^n$.
\end{thm}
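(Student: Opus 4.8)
\emph{Proof strategy.} The plan is as follows. For the sufficiency, one verifies directly that $P\mapsto\Pi P$, $P\mapsto\Pi_oP$, $P\mapsto\Pi_o(-P)$ and, for $p>1$, $P\mapsto\hat{\Pi}_p^{\pm}P$ are $\sln$ contravariant $L_p$ Minkowski valuations — the valuation identity (\ref{107}) holds because the defining facet/surface-area data are valuations, and $\sln$ contravariance follows from the transformation behaviour of these data under $\gln$ — and that the stated inequalities on $c_1,c_2,c_3$ (resp.\ $c_1,c_2\ge0$) are exactly what is needed for $ZP$ to be a convex body containing $o$. So the substance lies in the necessity, which I would prove in three steps.

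First I would reduce to simplices. Putting $f_K=h_{ZK}^p$ (so $f_K=h_{ZK}$ when $p=1$), the relation (\ref{107}) becomes $f_{K\cup L}+f_{K\cap L}=f_K+f_L$, so for each fixed direction $x$ the map $K\mapsto f_K(x)$ is a real-valued valuation on $\mathcal P_o^n$; this is the function-valued valuation point of view. Every $P\in\mathcal P_o^n$ admits a cone dissection from $o$ into members of $\mathcal T_o^n$, so by inclusion--exclusion $Z$ is determined by its restriction to $\mathcal T_o^n$. Since two full-dimensional members of $\mathcal T_o^n$ are $\sln$ equivalent exactly when they have equal volume (after relabelling vertices to match orientation), $\sln$ contravariance reduces everything to the one-parameter family $ZT_\lambda$, $T_\lambda=[o,\lambda e_1,e_2,\dots,e_n]$, $\lambda>0$, together with the values of $Z$ on lower-dimensional simplices; the latter are more rigid, and I would treat them first, essentially by induction on dimension.

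Next I would set up functional equations by dissection. Splitting the edge $[(a+b)e_1,e_2]$ of $T_{a+b}$ and using (\ref{107}) gives, after identifying the two full-dimensional pieces with $\sln$-images of $T_a$ and $T_b$ and the intersection with a degenerate simplex $E_{a,b}$, an identity
\begin{align*}
ZT_{a+b} \;+_p\; ZE_{a,b} \;=\; \phi_{a,b}^{-t}\, ZT_a \;+_p\; \psi_{a,b}^{-t}\, ZT_b
\end{align*}
with explicit $\phi_{a,b},\psi_{a,b}\in\sln$. The symmetries of $T_\lambda$ realised inside $\sln$ (for $n\ge3$ the alternating group permuting $e_2,\dots,e_n$, and the larger group permuting all of $e_1,\dots,e_n$ when $\lambda=1$) already force $h_{ZT_\lambda}$ into a rigid form, and evaluating the displayed identity at directions where $\phi_{a,b}^{-t}$ and $\psi_{a,b}^{-t}$ act trivially — and where the already-understood term $ZE_{a,b}$ is under control — turns it into a Cauchy-type functional equation for $\lambda\mapsto h_{ZT_\lambda}(x)$, linear in the values when $p=1$ and in their $p$-th powers when $p>1$.

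The crux, and the step I expect to be the main obstacle, is to solve these functional equations with no continuity, measurability or homogeneity hypothesis, and then to glue back. Here one uses that the unknowns are pieces of support functions of convex bodies containing $o$: nonnegativity and convexity, together with the symmetry constraints and the equations arising from \emph{all} admissible dissections, exclude the pathological additive solutions and force $h_{ZT_\lambda}$ to be an affine combination — with exactly the asserted sign pattern — of $h_{\Pi T_\lambda}$, $h_{\Pi_oT_\lambda}$, $h_{\Pi_o(-T_\lambda)}$ when $p=1$, and the corresponding $L_p$ combination of $h_{\hat{\Pi}_p^{+}T_\lambda}$ and $h_{\hat{\Pi}_p^{-}T_\lambda}$ when $p>1$. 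One then checks that these candidate operators reproduce the prescribed values on every simplex and every dissection, so that the reconstruction glues to a well-defined valuation on all of $\mathcal P_o^n$ and — in contrast to certain low-dimensional covariant situations — no sporadic valuation can appear for $n\ge3$ in the contravariant case.
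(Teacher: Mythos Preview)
This theorem is not proved in the present paper. It is stated as a known result, with attribution to Haberl \cite{Hab12b} and Parapatits \cite{Par14a}, and is used only as background for the paper's own contributions (Theorems~\ref{thm1.1}--\ref{thm1.7}). There is therefore no proof here against which your proposal can be compared.

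That said, your outline is broadly consistent with the methodology the paper employs for its own results and with the strategy of the cited sources: pass from $Z$ to the function-valued valuation $P\mapsto h_{ZP}^p$, reduce to $\mathcal{T}_o^n$ via inclusion--exclusion, exploit $\sln$ contravariance to reduce to a one-parameter family of simplices, derive Cauchy-type equations from dissections of $T^n$ by hyperplanes through the origin, and use positivity/sublinearity of support functions to rule out pathological solutions. One point to flag: in the contravariant case the relevant ``Cauchy equation'' is obtained by cutting $sT^n$ with the hyperplanes $H_\lambda$ (as the paper does in Section~3 for $p=\infty$), and the inverse-transpose action sends the parameter into the \emph{argument} of the support function rather than into the body; your formulation in terms of $T_\lambda=[o,\lambda e_1,e_2,\dots,e_n]$ and maps $\phi_{a,b},\psi_{a,b}$ acting on $ZT_a,ZT_b$ is workable but you should make sure the bookkeeping of contravariance (i.e.\ $h_{Z\phi T}(x)=h_{ZT}(\phi^{-1}x)$) is carried through correctly when you evaluate at special directions. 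For a detailed execution you will need to consult \cite{Hab12b} and \cite{Par14a} directly.
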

Here $\Pi$ is the classical \emph{projection body}, while $\hat{\Pi}_p^+$ and $\hat{\Pi}_p^-$ are the \emph{asymmetric $L_p$ projection bodies} first defined in \cite{Lud05}; see Section \ref{s2}. $\Pi_o$ is a valuation defined by $h_{\Pi_o P} = h_{\Pi P} - h_{\hat{\Pi}^+ P}$.

\begin{thm}[Haberl \cite{Hab12b} and Parapatits \cite{Par14b}]\label{lpco}
Let $n \geq 3$, $1 \leq p < \infty$ and $\{e_i\}_{i=1}^n$ be the standard basis of $\tr^n$. A map $Z : \mathcal{P}_o ^n \to \mathcal {K}_o ^n$ is an $\sln$ covariant $L_p$ Minkowski valuation which is continuous at the line segment $[o,e_1]$ if and only if there exist constants
$c_1,\dots,c_4 \geq 0$ such that
$$ZP = c_1 M_p^+ P +_p c_2 M_p ^- P +_p c_3 P +_p c_4 (-P)$$
for every $P \in \mathcal{P}_o ^n$.
\end{thm}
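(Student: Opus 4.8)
The plan is to translate everything into the language of support functions and then to reduce, via $\sln$ covariance, to a functional-equation problem on simplices. Put $f_P := h_{ZP}$. Since $h_{\phi K}(x) = h_K(\phi^t x)$, the covariance $Z(\phi P) = \phi ZP$ becomes $f_{\phi P}(x) = f_P(\phi^t x)$ for $\phi \in \sln$, and the valuation identity \eqref{107} becomes the pointwise relation $f_{K \cup L}(x)^p + f_{K \cap L}(x)^p = f_K(x)^p + f_L(x)^p$; in particular, for each fixed $x$ the map $P \mapsto f_P(x)^p$ is a nonnegative real-valued valuation on $\mathcal{P}_o^n$. Conversely, a family $(f_P)$ of support functions satisfying these relations, together with continuity at $[o,e_1]$, is exactly the data of such a $Z$. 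The "if" direction is then a direct verification: each of $M_p^+$, $M_p^-$, the identity $P \mapsto P$ and $P \mapsto -P$ is an $\sln$ covariant $L_p$ Minkowski valuation --- for $M_p^\pm$ because $h_{M_p^\pm P}(\cdot)^p$ equals, up to a normalizing constant, $\int_P \langle \pm y, \cdot\rangle_+^p\,dy$ and this integral is additive under $K \cup L, K \cap L$, while for $\pm P$ it is trivial --- and each is continuous on $\mathcal{P}_o^n$; for $c_1,\dots,c_4 \ge 0$ the $L_p$ combination is well defined and inherits all three properties.

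For the "only if" direction, I would first determine $Z$ on simplices. Modulo $\sln$, an $n$-dimensional simplex with a vertex at the origin is determined by its volume, so it is enough to analyze $Z$ on the one-parameter family $T_\lambda := \operatorname{conv}\{o, \lambda e_1, e_2, \dots, e_n\}$, $\lambda > 0$, together with its faces (which are approached as limits and handled by the continuity hypothesis). The engine is dissection: given a simplex $T = \operatorname{conv}\{o, v_1, \dots, v_n\}$ and a point $w = (1-t)v_1 + t v_2$ on the edge $[v_1, v_2]$, the hyperplane through $o, w, v_3, \dots, v_n$ cuts $T$ into the two simplices $\operatorname{conv}\{o, v_1, w, v_3, \dots, v_n\}$ and $\operatorname{conv}\{o, w, v_2, v_3, \dots, v_n\}$ meeting in the degenerate simplex $\operatorname{conv}\{o, w, v_3, \dots, v_n\}$, all having $o$ as a vertex. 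Inserting this into the valuation identity and using $\sln$ covariance to carry each of the four simplices back to a normal form produces, for each direction $x$, functional equations for $t \mapsto f_{T_\lambda}(x)$ (and for the degenerate simplices) in the single real parameter $t$; after splitting according to the region of $x$ on which the various vertex functionals dominate, these reduce to Cauchy/Pexider-type equations, whose solutions consist of additive functions plus the expected polynomial-in-the-volume, piecewise-homogeneous-in-$x$ pieces.

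This functional-equation analysis is the heart of the proof and the main obstacle: one must extract a workable finite system of equations from the dissections despite having no homogeneity or continuity beyond a single point, solve it, and check that the constants produced are consistent across the whole family $\{T_\lambda\}$ and across all directions $x$. The hypothesis that $Z$ is continuous at the segment $[o, e_1]$ is what forces the additive solutions of the Cauchy equations to be regular (linear) rather than pathological, and more generally fixes the behaviour at the degenerate simplices: it is precisely this hypothesis that excludes the sporadic extra valuations which, as the present paper shows, genuinely appear when $n = 3$ and $p = 1$. Carrying this through, one obtains that on every simplex $T$, $h_{ZT}(x)^p$ is a fixed nonnegative linear combination of $\int_T \langle y, x\rangle_+^p\,dy$, $\int_T \langle -y, x\rangle_+^p\,dy$, $h_T(x)^p$ and $h_{-T}(x)^p$ --- that is, $ZT = c_1 M_p^+ T +_p c_2 M_p^- T +_p c_3 T +_p c_4(-T)$ with constants $c_i \ge 0$ independent of $T$ --- the two $\max$-type terms $h_{\pm T}^p$ arising exactly from the part of the general solution that is concentrated on the faces (ultimately the vertices) of $T$ and so is not captured by the absolutely continuous moment-body integrals. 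The inequalities $c_i \ge 0$ are forced by $f_P \ge 0$.

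Finally I would extend from simplices to all of $\mathcal{P}_o^n$. Every polytope in $\mathcal{P}_o^n$ decomposes into simplices with a vertex at the origin (cone the origin with a triangulation of the boundary) in a way compatible with \eqref{107}, and the candidate valuation $P \mapsto c_1 M_p^+ P +_p c_2 M_p^- P +_p c_3 P +_p c_4(-P)$ satisfies the same inclusion--exclusion relations; since it agrees with $Z$ on all simplices, a standard valuation-extension/uniqueness argument, applied for each $x$ to the real-valued valuations $P \mapsto h_{ZP}(x)^p$ in the spirit of Groemer's extension theorem, shows the two coincide on all of $\mathcal{P}_o^n$, which completes the proof.
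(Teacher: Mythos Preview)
This theorem is not proved in the present paper; it is quoted from Haberl \cite{Hab12b} and Parapatits \cite{Par14b} as background. The paper's own contribution is precisely to \emph{remove} the continuity hypothesis (Theorems \ref{thm1.4}, \ref{thm1.6}, \ref{thm1.7}), and those proofs follow a strategy very close to your sketch: pass to the function-valued valuation $\Phi P = h_{ZP}^p$ (Section~\ref{Slp}), use the dissections $T^d = (T^d\cap H_\lambda^-)\cup(T^d\cap H_\lambda^+)$ together with $\sln$ covariance to obtain functional equations on the family $sT^d$ (cf.\ Lemmas \ref{lem5.8} and \ref{lem5.7}), and then extend from simplices to $\mathcal P_o^n$ by inclusion--exclusion. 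So your overall plan matches both the cited works and the paper's own machinery.

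One point where your account is slightly off is the role of the continuity hypothesis. The Cauchy-type equations that arise from the dissections are tamed not by continuity at $[o,e_1]$ but by the a priori bound $h_{ZP}^p \ge 0$ (this is exactly the ``bounded from below'' hypothesis in Lemma~\ref{lpf}); that already forces the additive part to be linear in the volume parameter. What continuity at $[o,e_1]$ actually does is pin down the behaviour on lower-dimensional simplices so that it matches the full-dimensional answer: in the paper's language it forces $a_1=a_2$ and $b_1=b_2$ in $\Phi_{p;a_1,a_2}$, collapsing the extra family of valuations to the four-term form. Without it one gets the larger family of Lemma~\ref{lem5.4}, which for $p=1$, $n=3$ survives as the genuine new valuation $D_{a_1,a_2,b_1,b_2}$ of Theorem~\ref{thm1.7}. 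Apart from this misattribution of roles, your sketch is sound and essentially the same approach.
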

Here $M_p^+$, $M_p^-$ are the \emph{asymmetric $L_p$ moment bodies} first defined in \cite{Lud05}; see Section \ref{s2}.

Haberl and Schuster \cite{HS09a} established affine isoperimetric inequalities for asymmetric $L_p$ projection bodies and asymmetric $L_p$ moment bodies. For other results on $L_p$ Minkowski valuations, see \cite{abardia2011p,abardia2013m,Lud03,Lud10b,PS12,SS06,Sch08,Sch2010,SW12,Lud12,Ober2014,Tsa12,Wan11}.
$L_p$ projection bodies and $L_p$ moment bodies ($1<p<\infty$) were first studied in \cite{LYZ00a} as part of $L_p$ Brunn-Minkowski theory developed by Lutwak, Yang, and Zhang, and many others; see \cite{HS09b,Lut93,Lut96,LYZ00b,LYZ02a,LYZ04a,LYZ05}.

As first result of this paper, we establish a classification of $L_\infty$ Minkowski valuations. We remark that the $L_\infty$ sum of $K,L \in \mathcal {K}^n$ is equal to its convex hull, $[K,L]$.

\begin{thm}\label{thm1.1}
Let $n \geq 3$. A map $Z : \mathcal{P}_o ^n \to \mathcal {K}_o ^n$ is an $\sln$ contravariant $L_\infty$ Minkowski valuation if and only if there exist constants $c_1, c_2 \geq 0$ such that
$$ZP = c_1 \hat{\Pi}_\infty ^+ P +_\infty c_2 \hat{\Pi}_\infty ^- P$$
for every $P \in \mathcal{P}_o ^n$.
\end{thm}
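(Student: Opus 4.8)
The plan is to follow the now-standard route for classifying $\mathrm{SL}(n)$ equivariant Minkowski valuations on polytopes: reduce to simplices with a vertex at the origin, solve a Cauchy-type functional equation there, and then extend from $\mathcal{T}_o^n$ to $\mathcal{P}_o^n$ via the inclusion--exclusion/triangulation machinery. First I would record that sufficiency is essentially immediate: $\hat\Pi_\infty^\pm$ are themselves $\mathrm{SL}(n)$ contravariant $L_\infty$ Minkowski valuations (this is the $p\to\infty$ analogue of the $L_p$ statement, and the valuation property transfers under $+_\infty$ since $h_{K+_\infty L}=h_K\vee h_L$ is a lattice operation compatible with \eqref{107}), and a nonnegative $+_\infty$-combination of such valuations is again one; so the content is necessity.

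For necessity, I would first exploit $\mathrm{SL}(n)$ contravariance on the standard simplex $T^n=[o,e_1,\dots,e_n]$. The stabilizer of $T^n$ in $\mathrm{SL}(n)$ (permutations of $e_1,\dots,e_n$ composed with the reflection fixing the vertex set, up to sign normalization) forces $Z T^n$ to have a support function built from a single real parameter, and one pins down $Z T^n$ to lie in the two-parameter family spanned by $h_{\hat\Pi_\infty^+ T^n}$ and $h_{\hat\Pi_\infty^- T^n}$; equivalently, working on a facet, $h_{ZT^n}$ restricted to a coordinate hyperplane direction is determined by how $Z$ behaves on lower-dimensional ``cone'' simplices. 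The key technical step is the dissection identity: cutting a simplex $T$ (with origin vertex) by a hyperplane through the origin and through the opposite facet splits it into two simplices $T_1,T_2$ with $T_1\cup T_2=T$ and $T_1\cap T_2$ a lower-dimensional simplex on which $Z$ vanishes by degeneracy; applying \eqref{107} with $p=\infty$ gives
\begin{align}\label{diss}
h_{ZT}(x) = h_{ZT_1}(x)\vee h_{ZT_2}(x)
\end{align}
for all such admissible dissections. Feeding in the one-parameter freedom from the stabilizer and letting the dissection point vary over the facet, \eqref{diss} becomes a functional equation whose solutions — because $\max$ replaces $+$ or $(\cdot)^p$ — are exactly the ``piecewise-linear in the right variables'' functions, i.e. support functions of $c_1\hat\Pi_\infty^+ +_\infty c_2\hat\Pi_\infty^-$ evaluated on $T$. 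I would handle $n=3$ and $n\ge 4$ uniformly here, since the obstruction that produces the extra valuations in Theorem 4 (the $n=3$, $p=1$ case) is special to $p=1$ and does not arise for the lattice operation $\vee$.

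Once $Z$ is determined on $\mathcal{T}_o^n$, the final step is to propagate to all of $\mathcal{P}_o^n$. Every $P\in\mathcal{P}_o^n$ admits a triangulation into simplices with the origin as a common vertex, and repeated use of \eqref{107} (valuation extension along a triangulation, as in Ludwig--Reitzner/Haberl--Parapatits) expresses $h_{ZP}$ through the values on those simplices; since $\hat\Pi_\infty^+$, $\hat\Pi_\infty^-$ and hence $c_1\hat\Pi_\infty^+ +_\infty c_2\hat\Pi_\infty^-$ are already valuations agreeing with $Z$ on $\mathcal{T}_o^n$, they agree on all of $\mathcal{P}_o^n$. The main obstacle I anticipate is verifying that the $L_\infty$ (max) operation is compatible with the triangulation/inclusion--exclusion argument: unlike $+_p$ for $p<\infty$, $+_\infty$ is idempotent and not cancellative, so one must argue directly at the level of support functions that the identity forced on $h_{ZP}$ by a triangulation is consistent and independent of the chosen triangulation — i.e. that $\max$-valuations extend from simplices to polytopes. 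I expect this to follow by induction on the number of simplices in the triangulation together with the cone-valued degeneracy $Z\{o\}=\{o\}$ from \eqref{37}, but it is the step that requires the most care.
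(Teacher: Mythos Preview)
Your strategy is essentially the paper's own: vanish on lower-dimensional simplices, use the dissection of $sT^n$ by a hyperplane through the origin together with an $L_\infty$ Cauchy lemma to pin down $Z(sT^n)$, then induct over a triangulation hierarchy to reach all of $\mathcal{P}_o^n$. Two places deserve more care than your sketch gives them. First, the claim that $Z$ vanishes on the $(n-1)$-dimensional slice $T_1\cap T_2$ ``by degeneracy'' is not pure stabilizer-degeneracy: the stabilizer argument (your $\phi$ fixing $T$ and acting freely in the normal direction) only handles $\dim T\le n-2$ directly; for $\dim T=n-1$ the paper has to feed the valuation identity itself back in, using the $d=n-2$ result to kill the intersection term and then reach a contradiction from $\lambda\vee(1-\lambda)=1$ (Lemma~\ref{lem3.1}). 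Second, on the extension step you correctly flag non-cancellativity of $\vee$ as the issue; the resolution is precisely that the lower-dimensional vanishing just proved gives $h_{Z(P_1\cap P_2)}=0\le h_{ZP_i}$, so the valuation relation $h_{Z(P_1\cup P_2)}\vee h_{Z(P_1\cap P_2)}=h_{ZP_1}\vee h_{ZP_2}$ collapses to $h_{Z(P_1\cup P_2)}=h_{ZP_1}\vee h_{ZP_2}$, which is already deterministic --- no separate triangulation-independence argument is needed, and $Z\{o\}=\{o\}$ alone would not suffice.
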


The \emph{asymmetric $L_\infty$ projection body} $\hat{\Pi}_\infty ^+ :\mathcal {P}_o ^n \to \mathcal {K}_o ^n$ is defined by
\begin{align*}
\hat{\Pi} _\infty ^+ P = \left[ o,\frac{u_i}{h_{P}(u_i)}: u_i \in \mathcal {N}(P) \setminus \mathcal {N}_o(P) \right],
\end{align*}
and
\begin{align*}
\hat{\Pi}_\infty ^- P = - \hat{\Pi}_\infty ^+ P.
\end{align*}
Here $\mathcal {N} (P)$ is the set of outer unit normals to facets (that is $n-1$ dimensional faces) of $P$ and $\mathcal {N}_o (P)$ is the set of outer unit normals to facets of $P$ which contain the origin.
Both $\hat{\Pi} _\infty ^+$ and $\hat{\Pi} _\infty ^-$ are the limits of $\hat{\Pi} _p ^+$ and $\hat{\Pi} _p ^-$ as $p \to \infty$. So they are clearly $L_\infty$ Minkowski valuations. Also, $\hat{\Pi}_\infty^+ $ is an extension of the polarity. Indeed,
if a convex body $K$ contains the origin in its interior, then
$\hat{\Pi} _\infty ^+ K = K^*$, the polar body of $K$. All the details can be found in Section \ref{s2}.

If a valuation $Z_p$ is an $L_p$ Minkowski valuation, then the limit $\lim\limits_{p \to \infty} Z_p$ is an $L_\infty$ Minkowski valuation. But there could be more $L_\infty$ Minkowski valuations than the limits of $L_p$ cases. Indeed, Theorem \ref{thm1.3} shows that there are additional examples.

\begin{thm}\label{thm1.3}
Let $n \geq 3$. A map $Z : \mathcal{P}_o ^n \to \mathcal {K}_o ^n$ is an $\sln$ covariant $L_\infty$ Minkowski valuation if and only if there exist constants $0 \leq a_1 \leq \dots \leq a_n$, $0 \leq b_1 \leq \dots \leq b_n$ such that
\begin{align*}
ZP = a_d P +_\infty (- b_d P)
\end{align*}
for every $d$-dimensional convex polytope $P \in \mathcal{P}_o ^n$, $1 \leq d \leq n$, while $Z \{ o \} = \{ o \}$.
\end{thm}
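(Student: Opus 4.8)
The ``if'' direction is the routine part: covariance of $P\mapsto a_d P+_\infty(-b_d P)=[a_d P,-b_d P]$ is immediate, since every $\phi\in\sln$ commutes with dilations and with the convex hull operation and preserves $\dim P$. For the valuation identity one checks the cases of the decomposition $[P,Q]=P\cup Q$: if one of $P,Q$ contains the other it is trivial, and otherwise $P,Q$ and $P\cup Q$ all have one common dimension $d$ while $P\cap Q$ has dimension $d'\in\{d-1,d\}$; since $o\in P\cap Q$ and $0\le a_{d'}\le a_d$, $0\le b_{d'}\le b_d$, the term $Z(P\cap Q)=[a_{d'}(P\cap Q),-b_{d'}(P\cap Q)]$ is absorbed and both sides collapse to $[a_d(P\cup Q),-b_d(P\cup Q)]$. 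This is precisely where the monotonicity hypotheses enter.

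For the ``only if'' direction put $T^d=[o,e_1,\dots,e_d]$ and identify $\mathbb R^d$ with $\lin\{e_1,\dots,e_d\}$. First I would show that $ZP\subseteq\lin P$ for every $d$-dimensional $P\in\mathcal P_o^n$ with $d\le n-1$: choosing the transvections $\tau\in\sln$ that fix $\lin P$ pointwise and act nontrivially on a complementary subspace, one has $\tau P=P$, hence $ZP=\tau(ZP)$, and boundedness of the orbit of each point of $ZP$ under all such $\tau$ forces $ZP\subseteq\lin P$. Consequently $Z$ restricts to an $\operatorname{SL}(d)$ covariant $L_\infty$ Minkowski valuation on the polytopes in $\mathbb R^d$. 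Moreover, for $n\ge3$ and $d\le n-1$, $\sln$ acts transitively on the $d$-simplices with a vertex at $o$, so $Z$ on all of them is determined by the single convex body $K_d:=ZT^d\subseteq\mathbb R^d$ through $Z[o,v_1,\dots,v_d]=\psi(K_d)$, where $\psi$ is the linear map $e_i\mapsto v_i$; and $K_d$ is invariant under all coordinate permutations of $\mathbb R^d$, since these stabilize $T^d$ once corrected in the remaining coordinates.

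The heart of the argument is to pin down each $K_d$, by induction on $d$. For $d=1$, $K_1=[-b_1 e_1,a_1 e_1]$ for some $a_1,b_1\ge0$, and transitivity together with splitting a two-sided segment at $o$ (using $Z\{o\}=\{o\}$) yields the asserted formula on all $1$-dimensional polytopes. For $d\ge2$ I would cut $T^d$ by the hyperplane $\{x_1=x_2\}$ into $U_1=[o,e_1,m,e_3,\dots,e_d]$ and $U_2=[o,e_2,m,e_3,\dots,e_d]$, $m=\tfrac12(e_1+e_2)$, so that $U_1\cap U_2=W:=[o,m,e_3,\dots,e_d]$ is a $(d-1)$-simplex, $U_1\cup U_2=T^d$, and the valuation identity reads
\[
K_d +_\infty ZW = \psi_1 K_d +_\infty \psi_2 K_d ,
\]
where $\psi_1,\psi_2\in\sln$ restrict on $\lin T^d$ to the maps sending $e_2\mapsto m$, respectively $e_1\mapsto m$, and fixing the other basis vectors, and $ZW=a_{d-1} W+_\infty(-b_{d-1} W)$ by the inductive hypothesis. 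The permutation symmetry of $K_d$ together with this identity (and, when convenient, the analogues coming from the cuts $\{x_i=x_j\}$) should force $K_d=a_d T^d+_\infty(-b_d T^d)$ for some $a_d,b_d\ge0$, and the requirement that $ZW$ lie inside the right-hand side then gives $a_d\ge a_{d-1}$, $b_d\ge b_{d-1}$. The top dimension $d=n$ is handled separately, since the localization above is unavailable: one runs the same cut on the orbit representatives $T^n_\lambda:=[o,\lambda e_1,e_2,\dots,e_n]$, $\lambda>0$ (the intersection again being an $(n-1)$-simplex, already classified), and solves the resulting functional equation in $\lambda$, using the $A_n$-symmetry of $ZT^n_1$, to get $ZT^n_\lambda=a_n T^n_\lambda+_\infty(-b_n T^n_\lambda)$ with $a_n\ge a_{n-1}$, $b_n\ge b_{n-1}$.

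Once the formula is known on all simplices it must be propagated to arbitrary $P\in\mathcal P_o^n$ by a triangulation argument carried out on support functions (which is delicate because $(\mathcal K_o^n,+_\infty)$ is not a group), organised around the fact that $P\mapsto h_P(u)$ is an ordinary real-valued valuation for each fixed $u$ (since $h_{K\cup L}(u)+h_{K\cap L}(u)=h_K(u)+h_L(u)$ whenever $K,L,K\cup L,K\cap L$ are convex) and the comparison of $Z$ with the candidate $P\mapsto a_{\dim P} P+_\infty(-b_{\dim P} P)$, which by the ``if'' part is an $\sln$ covariant $L_\infty$ Minkowski valuation agreeing with $Z$ on every simplex. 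I expect the main obstacle to be solving $K_d +_\infty ZW = \psi_1 K_d +_\infty \psi_2 K_d$ and concluding that $K_d$ is \emph{exactly} the kite $[a_d e_1,\dots,a_d e_d,-b_d e_1,\dots,-b_d e_d]$: the first genuinely hard instance is $d=2$, where nothing below is available except the segment case, and there I would pass to support functions on $\mathbb R^2$, turn $+_\infty$ into a pointwise maximum, exploit the $e_1\leftrightarrow e_2$ symmetry and positive homogeneity to reduce to a one-variable identity along each ray, and exclude every non-kite solution; the case $d=n$ is delicate for a different reason, namely that $ZT^n_\lambda$ is not confined to a proper subspace, so one must in addition show the solution depends on $\lambda$ only through the dilation. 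The localization step and the final extension are, by contrast, routine adaptations of standard techniques.
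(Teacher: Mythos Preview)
Your overall plan---localize $ZP$ to $\lin P$, determine $ZT^d$ by induction on $d$ using the valuation identity for a hyperplane cut through the origin, then extend from simplices to all of $\mathcal P_o^n$---is exactly the paper's. The substantive divergence is that you commit to the \emph{single} cut $\{x_1=x_2\}$ (plus its coordinate permutations), whereas the paper exploits the full one-parameter family $H_\lambda=\{(1-\lambda)x_1=\lambda x_2\}$, $0<\lambda<1$.

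For $d\le n-1$ this is mainly a matter of efficiency. The paper chooses $\lambda$ depending on $x$ so that one of $\phi_1^t x,\phi_2^t x$ has strictly fewer nonzero coordinates than $x$; this gives a clean inner induction on that number and pins down $h_{K_d}(x)$ in one step per coordinate. Your midpoint identity never drops a coordinate, so you would have to iterate the bisection and combine it with convexity of $h_{K_d}$; this is plausible in the positive cone (one shows $h_{K_d}(e_1+te_2)=a_d$ first on $[0,\tfrac12]$, then on $[0,\tfrac34]$, and so on), but you have not indicated how to treat the mixed-sign directions, and the argument is visibly heavier than the paper's.

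For $d=n$ the gap is more serious. Evaluating at $e_n$, the paper's family of cuts on $sT^n$ (all $s>0$, all $\lambda\in(0,1)$) produces the $L_\infty$ Cauchy equation
\[
f(\mu_1+\mu_2)\vee a_{n-1}=f(\mu_1)\vee f(\mu_2)\qquad\text{for all }\mu_1,\mu_2>0,
\]
with $f(\mu)=h_{\mu^{-1/n}Z(\mu^{1/n}T^n)}(e_n)$, and a dedicated lemma (their Lemma~2.1) shows this forces $f$ constant \emph{without} any continuity hypothesis. Your cut $\{x_1=x_2\}$ on $T^n_\lambda$ yields, for each $\lambda$, only the single split $\mu_1=\tfrac{\lambda^2}{1+\lambda}$, $\mu_2=\tfrac{\lambda}{1+\lambda}$, and the cuts $\{x_i=x_j\}$ with $i,j\ge2$ only give $f(\lambda)=f(\lambda/2)$. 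Since $Z$ is not assumed continuous, it is not clear---and you have not argued---that this one-parameter family of relations forces $f$ constant. The clean fix is precisely to allow the full family $H_\lambda$ and invoke the $L_\infty$ Cauchy lemma, which is what the paper does.
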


If $\dim P =n$, then $\lim\limits_{p \to \infty} M_p ^+ P = P$ and $\lim\limits_{p \to \infty} M_p ^- P = -P$. If $\dim P < n$, $\lim\limits_{p \to \infty} M_p ^+ P = \{o \}$ and $\lim\limits_{p \to \infty} M_p ^- P = \{o \}$. This is the reason that $\lim\limits_{p \to \infty} M_p ^+$ and $\lim\limits_{p \to \infty} M_p ^-$ do not show up in Theorem \ref{thm1.3}; see Section \ref{s2} for details. In Theorem \ref{thm1.3}, we do not have any continuity assumptions. It inspires us to also find a classification result for $\sln$ covariant $L_p$ Minkowski valuations without any continuity assumptions for finite $p$.
\begin{thm}\label{thm1.4}
Let $n \geq 3$ and $1 < p < \infty$. A map $Z : \mathcal{P}_o ^n \to \mathcal {K}_o ^n$ is an $\sln$ covariant $L_p$ Minkowski valuation if and only if there exist constants
$c_1,\dots,c_4 \geq 0$ such that
$$ZP = c_1 M_p^+ P +_p c_2 M_p ^- P +_p c_3 P +_p c_4 (-P) $$
for every $P \in \mathcal{P}_o ^n$.
\end{thm}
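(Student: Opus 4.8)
The plan is to deduce the result from the corresponding $L_\infty$-type arguments used for Theorem \ref{thm1.3} together with the continuity-assisted classification in Theorem \ref{lpco}. The strategy is: first show that an arbitrary $\sln$ covariant $L_p$ Minkowski valuation $Z$ is automatically ``tame'' enough at the line segment $[o,e_1]$ that Theorem \ref{lpco} applies, i.e. prove that continuity at $[o,e_1]$ is not an extra hypothesis but a consequence of the valuation property plus $\sln$ covariance. Once that is established, Theorem \ref{lpco} immediately yields the stated form $ZP = c_1 M_p^+ P +_p c_2 M_p^- P +_p c_3 P +_p c_4(-P)$, and the converse (that each such $Z$ is indeed an $\sln$ covariant $L_p$ Minkowski valuation) is classical. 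So the whole content is the reduction to the continuous case.

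First I would analyze $Z$ on lower-dimensional polytopes. By $\sln$ covariance, the restriction of $Z$ to $d$-dimensional polytopes through the origin ($1 \le d \le n-1$) is itself, after choosing coordinates, an $\operatorname{SL}(d)$-covariant (up to the action of the stabilizer) $L_p$ Minkowski valuation on a lower-dimensional space, and one shows by an induction on dimension — the base case $d=1$ being an explicit computation on segments $[o,e_1]$ and their scalings, using the valuation identity on the subdivision of a segment into two subsegments — that $Z$ restricted to $d$-dimensional polytopes is a nonnegative $L_p$-combination of the two ``obvious'' covariant maps available there; in particular $Z[o,e_1]$ is a fixed segment $[o, \lambda e_1]$ and $Z$ is continuous along the family of segments $t[o,e_1]$. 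The key point to extract is: evaluating $Z$ on a small $n$-simplex near $[o,e_1]$ and subdividing it, the valuation equation (\ref{107}) forces the ``defect'' of $Z$ from continuity at $[o,e_1]$ to satisfy a Cauchy-type functional equation that, combined with the bound coming from $Z$ taking values in $\mathcal{K}_o^n$ (so $h_{ZP} \ge 0$ and $h_{ZP}$ is finite), must vanish — precisely the same mechanism that produces the ordered constants $0 \le a_1 \le \dots \le a_n$ in Theorem \ref{thm1.3}, but here the $L_p$ rigidity (for $p>1$ the function $t \mapsto t^p$ is strictly convex) kills the jump that was allowed at $p=\infty$ and at $p=1$.

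Concretely, the steps in order are: (i) reduce to $Z$ on $\mathcal{T}_o^n$ by the standard fact that a Minkowski valuation on $\mathcal{P}_o^n$ is determined by its values on simplices via the inclusion–exclusion/triangulation argument (this uses that $+_p$ is associative and cancellative for $p<\infty$, so (\ref{107}) can be iterated); (ii) establish the induction on dimension to pin down $Z$ on all polytopes of dimension $<n$ and thereby compute $Z[o,e_1]$; (iii) write, for an $n$-simplex $T = [o, e_1, \varepsilon_2 e_2, \dots, \varepsilon_n e_n]$ with $\varepsilon_i \to 0$, the support function $h_{ZT}$ in terms of the ``atoms'' on the faces obtained by pushing $T$ onto a triangulation, and show $h_{ZT} \to h_{Z[o,e_1]}$ pointwise, i.e. continuity at $[o,e_1]$ holds for free; (iv) invoke Theorem \ref{lpco} to conclude, and check the converse directly. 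The main obstacle I expect is step (iii): one has to control $Z$ on a genuinely $n$-dimensional but ``thin'' simplex using only the abstract valuation identity — there is no homogeneity to normalize with — and rule out a wild jump at the boundary stratum, which is exactly where the $p>1$ (strict convexity) hypothesis has to be used in an essential way, mirroring but strengthening the argument behind Theorem \ref{thm1.3}. A secondary subtlety is bookkeeping the stabilizer of the lower-dimensional flat inside $\sln$ (it is not all of $\operatorname{SL}(d)$, but the extra unipotent directions act trivially on the flat), so that the inductive hypothesis can legitimately be applied.
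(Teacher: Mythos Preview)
Your high-level strategy --- prove that continuity at $[o,e_1]$ is automatic and then invoke Theorem~\ref{lpco} --- is reasonable in principle, but the outline has a genuine gap at exactly the point where all the work lies.

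The claim in step~(ii), that $Z$ restricted to $d$-dimensional polytopes ($d<n$) is an $L_p$-combination of $P\mapsto P$ and $P\mapsto -P$, is precisely what has to be proved, not something one can assume in an induction. The paper shows (Lemma~\ref{lem5.8}) that without any continuity hypothesis, on lower-dimensional polytopes one only gets
\[
h_{ZP}^p=\Phi_{p;a_1,a_2}P+\Phi_{p;b_1,b_2}(-P),
\]
where $\Phi_{p;a_1,a_2}$ is an explicit $\gln$-covariant valuation built from the faces of $P$ through the origin, and the two parameters $a_1,a_2$ are a~priori independent. In particular $h_{ZT^2}^p(e_1)=a_2$ while $h_{Z[o,e_1]}^p(e_1)=a_1$, so continuity at $[o,e_1]$ is \emph{equivalent} to $a_1=a_2$ (and $b_1=b_2$). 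Your induction step from $d=1$ to $d=2$ would have to establish this, and you give no mechanism for it. (Also, your base case ``subdivision of a segment into two subsegments'' does not apply: one of the pieces fails to contain the origin, so it is not in $\mathcal P_o^n$ and the valuation identity is unavailable there.)

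Step~(iii) is then misdirected: the discontinuity, if any, is already visible on $2$- and $3$-dimensional simplices approaching $[o,e_1]$, not on thin $n$-simplices, and it is not governed by a Cauchy-type functional equation but by whether $(\Phi_{p;a_1,a_2}T^d+\Phi_{p;b_1,b_2}(-T^d))^{1/p}$ can be a support function when $a_1\neq a_2$. The paper's actual argument (Lemma~\ref{lem5.1}) is a concrete convex-geometric one: from the sublinearity of $h:=\lim_{\alpha\to 0^+}\alpha^{-1}(\Phi(\alpha T^3))^{1/p}$ one first gets $a_1\le a_2$, and then, using that for $p>1$ the body $a_2^{1/p}T^2+_p b_2^{1/p}(-T^2)$ has $a_2^{1/p}e_1$ as an exposed vertex (this is where strict convexity of $t\mapsto t^p$ enters), one forces $a_2^{1/p}e_1$ into the body $K$ with $h_K=h$ and deduces $a_2^{1/p}\le h_K(e_1+e_2+e_3)=a_1^{1/p}$. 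This is the step your outline is missing; the phrases ``Cauchy-type equation'' and ``$L_p$ rigidity kills the jump'' do not substitute for it.
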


\begin{thm}\label{thm1.6}
Let $n \geq 4$. A map $Z : \mathcal{P}_o ^n \to \mathcal {K}_o ^n$ is an $\sln$ covariant Minkowski valuation if and only if there exist constants $c_1,\dots,c_4 \geq 0$ such that
$$ZP = c_1 M^+ P + c_2 M ^- P +  c_3 P + c_4 (-P)$$
for every $P \in \mathcal{P}_o ^n$.
\end{thm}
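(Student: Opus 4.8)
The plan is to establish the forward implication (that every such $Z$ has the stated form); the converse is routine, since each of the maps $M^+$, $M^-$, $P \mapsto P$ and $P \mapsto -P$ is an $\sln$ covariant Minkowski valuation — for $M^{\pm}$ because $P \mapsto \int_P \langle\,\cdot\,,y\rangle_{\pm}\,dy$ is a valuation and $|\det\phi| = 1$ for $\phi \in \sln$ — and a nonnegative linear combination of $\sln$ covariant Minkowski valuations is again one. For the forward direction I would pass to support functions: $\sln$ covariance reads $h_{Z\phi P}(x) = h_{ZP}(\phi^{t}x)$, and for each fixed $x \in \tr^n$ the functional $P \mapsto h_{ZP}(x)$ is a real-valued valuation on $\mathcal{P}_o^n$ vanishing at $\{o\}$. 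By the standard triangulation / inclusion-exclusion machinery for valuations on polytopes, such a functional is determined by its restriction to $\mathcal{T}_o^n$, and conversely any assignment on $\mathcal{T}_o^n$ consistent under the dissection relations among origin-simplices extends uniquely; so it suffices to pin down $Z$ on $\mathcal{T}_o^n$ and then observe that the resulting formula, already known to be a valuation, reproduces $Z$ on all of $\mathcal{P}_o^n$.

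Next I would analyze $Z$ on simplices. Every $n$-dimensional simplex in $\mathcal{T}_o^n$ is $\sln$-equivalent to exactly one $T_t = \mathrm{conv}\{o,e_1,\dots,e_{n-1},te_n\}$, $t>0$ (the parameter being a normalized volume), while for $d<n$ all $d$-dimensional simplices in $\mathcal{T}_o^n$ are mutually $\sln$-equivalent. Using $\sln$ covariance — notably under the even permutations of the coordinate directions stabilizing the standard simplex $\mathrm{conv}\{o,e_1,\dots,e_n\}$ — together with the valuation identity for the two origin-simplices obtained by slicing a simplex through a vertex (e.g.\ the hyperplane $\{x_i = x_j\}$ cuts $T_t$ into two simplices of half the volume meeting in an $(n-1)$-simplex), one derives functional equations relating $h_{ZT_t}$, $h_{ZT_{t/2}}$ and the value of $Z$ on an $(n-1)$-simplex, and, iterating and varying the cut, a Cauchy-type equation in the volume parameter. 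The decisive extra ingredient compared with the continuous case is that $h_{ZT_t}(\,\cdot\,)$ must be sublinear; combining sublinearity (and $ZT_t \in \mathcal{K}_o^n$) with these equations eliminates the pathological solutions and forces $h_{ZT_t} = c_1\,h_{M^+T_t} + c_2\,h_{M^-T_t} + c_3\,h_{T_t} + c_4\,h_{-T_t}$ with nonnegative coefficients depending, a priori, only on $\dim T_t$. The same treatment in each lower dimension yields the analogous form there (with the $M^{\pm}$ terms absent), with coefficients $c_i^{(d)}$.

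By the extension of the first step, $Z$ then has the form $c_1^{(d)}M^+P +_1 c_2^{(d)}M^-P +_1 c_3^{(d)}P +_1 c_4^{(d)}(-P)$ on every $d$-dimensional $P \in \mathcal{P}_o^n$. To remove the dimension dependence, I would pick $K,L \in \mathcal{P}_o^n$ with $K\cup L$ convex, $\dim(K\cup L)=d$ and $\dim(K\cap L)=d-1$ (a bipyramid-type split works); then the valuation identity $h_{Z(K\cup L)} + h_{Z(K\cap L)} = h_{ZK} + h_{ZL}$, combined with the corresponding identities for $M^{\pm}$ and for the identity map and with the vanishing of $M^{\pm}$ in dimension $<n$, reduces — because Minkowski addition of support functions is cancellative — to $(c_3^{(d-1)}-c_3^{(d)})\,h_{K\cap L} + (c_4^{(d-1)}-c_4^{(d)})\,h_{-(K\cap L)} = 0$; choosing $K\cap L$ not centrally symmetric forces $c_3^{(d-1)}=c_3^{(d)}$ and $c_4^{(d-1)}=c_4^{(d)}$, and descending from $d=n$ gives uniform constants $c_1,\dots,c_4\ge 0$. (This is exactly the step that breaks for $p=\infty$, where $+_\infty$ is not cancellative, which is why Theorem \ref{thm1.3} carries dimension-dependent coefficients while Theorem \ref{thm1.4} and the present theorem do not.) Reassembling by the inclusion-exclusion of the first step yields $ZP = c_1 M^+P + c_2 M^-P + c_3 P + c_4(-P)$ for every $P \in \mathcal{P}_o^n$.

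The main obstacle is the functional-equation analysis on $n$-simplices in the second step, and it is there that the hypothesis $n\ge 4$ enters: without a continuity assumption the equations governing $h_{ZT_t}$ acquire, in low dimension, genuine extra solutions, and for $n=3$ some survive the sublinearity constraint — these are the exceptional valuations referred to in the abstract — so one must exploit the larger stabilizer and the wider supply of vertex-cuts available when $n\ge 4$ to squeeze every solution into the four-parameter family. (Alternatively, for $n\ge4$ one might try to show directly that any such $Z$ is automatically continuous at $[o,e_1]$ and then invoke Theorem \ref{lpco} at $p=1$; but the direct argument is needed anyway to locate the $n=3$ exceptions and is more self-contained.) A minor technical point is to verify that the cone/triangulation decomposition of an arbitrary $P\in\mathcal{P}_o^n$ — whether $o$ lies in the interior of $P$ or on its boundary — can be arranged so that every polytope appearing stays in $\mathcal{P}_o^n$, so that the valuation identity applies at each step; this is routine, and the self-consistency of the final formula is automatic since its right-hand side is already known to be an $\sln$ covariant Minkowski valuation.
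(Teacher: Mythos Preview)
Your step 3 asserts that on each simplex the slice relations together with sublinearity force $h_{ZT}$ into the four-parameter form $c_1 h_{M^+T} + c_2 h_{M^-T} + c_3^{(d)} h_T + c_4^{(d)} h_{-T}$. This is false for every $n\ge 3$: the $\sln$ covariant Minkowski valuations on $\mathcal{T}_o^n$ form a genuine \emph{six}-parameter family $c_1 M^+ + c_2 M^- + D_{a_1,a_2,b_1,b_2}$ (this is precisely Theorem~\ref{thm1.5} of the paper). For $a_1\ne a_2$ the function $h_{D_{a_1,a_2,b_1,b_2}T^d}$ is not of the form $c_3 h_{T^d}+c_4 h_{-T^d}$ (compare Proposition~\ref{pro5.1}), yet it is a bona fide support function and satisfies every slice equation you would derive. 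Consequently your step 4 starts from a false premise; and even if you reformulated it with the correct six-parameter form on simplices, the bipyramid cancellation argument could not separate $a_1$ from $a_2$, because $P\mapsto \Phi_{1;a_1,a_2}P+\Phi_{1;b_1,b_2}(-P)$ is already a valuation on all of $\mathcal{P}_o^n$ (Lemma~\ref{lem5.3}), so the valuation identity on any split holds identically and yields no constraint.

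What actually eliminates the extra two parameters for $n\ge 4$ is a failure of \emph{sublinearity} on a single polytope outside $\mathcal{T}_o^n$. The paper first classifies the $\sln$ covariant function-valued valuations on $\mathcal{P}_o^n$ as exactly this six-parameter family (Lemma~\ref{lem5.4}), and then tests subadditivity of $\Phi_{1;a_1,a_2}+\Phi_{1;b_1,b_2}(-\,\cdot\,)$ on the polytope $[-e_1,e_1,e_2,e_3,e_4]$ at three well-chosen vectors; the inequality forces $a_2\le a_1$, hence $a_1=a_2$ and likewise $b_1=b_2$ (Lemma~\ref{lem5.1}). The test polytope is a $4$-simplex with the origin in the relative interior of an edge, so it lies in $\mathcal{P}_o^n\setminus\mathcal{T}_o^n$ and exists only when $n\ge 4$ --- that is the real entry point of the dimension hypothesis, not a larger stabilizer or a richer supply of vertex-cuts on origin-simplices. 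For $n=3$ no such obstruction is available and the full six-parameter family survives on all of $\mathcal{P}_o^3$ (Theorem~\ref{thm1.7}).
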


\begin{thm}\label{thm1.7}
A map $Z : \mathcal{P}_o ^3 \to \mathcal {K}_o ^3$ is an $\sln$ covariant Minkowski valuation if and only if there exist constants
$a_1,a_2,b_1,b_2,c_1,c_2 \geq 0$ satisfying $a_1 \leq a_2$, $b_1 \leq b_2$, $a_2-a_1 \leq b_2$ and $b_2-b_1 \leq a_2$ such that
$$ZP = c_1 M^+ P + c_2 M ^- P + D_{a_1,a_2,b_1,b_2} P$$
for every $P \in \mathcal{P}_o ^3$.
\end{thm}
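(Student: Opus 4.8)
The \textbf{sufficiency} direction splits into three independent facts: that $M^+$ and $M^-$ are $\sln$ covariant Minkowski valuations, that $D_{a_1,a_2,b_1,b_2}$ is an $\sln$ covariant Minkowski valuation, and that the four inequalities $a_1\le a_2$, $b_1\le b_2$, $a_2-a_1\le b_2$, $b_2-b_1\le a_2$ guarantee $D_{a_1,a_2,b_1,b_2}P\in\mathcal{K}_o^3$ for every $P$. The full statement then follows at once, since a Minkowski sum of $\sln$ covariant valuations is again $\sln$ covariant, a Minkowski sum of members of $\mathcal{K}_o^3$ again lies in $\mathcal{K}_o^3$, and (\ref{37}) is automatic. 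Covariance and the valuation property of $M^\pm$ are classical and already used for Theorems~\ref{lpco} and \ref{thm1.6}, so the only genuine work is the analysis of $D_{a_1,a_2,b_1,b_2}$: one writes out its support function over a triangulation of $P$ into members of $\mathcal{T}_o^3$, checks the Minkowski valuation identity (\ref{107}) (with $p=1$) piece by piece, and verifies sublinearity of the resulting function. The four inequalities are precisely the conditions under which the $2$-dimensional boundary contributions glue to convex $3$-dimensional data; the extremal case should be a tetrahedron one of whose facets contains the origin.

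For \textbf{necessity} I would follow the three-stage template used for Theorems~\ref{thm1.4} and \ref{thm1.6}, the new feature being that in $\mathbb{R}^3$ the $2$-dimensional faces behave like the exceptional planar case rather than like the faces occurring in dimension $\ge 4$ (for $1<p<\infty$ the $L_p$ structure suppresses the extra valuations this would produce, which is why Theorem~\ref{thm1.4} has no analogue of $D_{a_1,a_2,b_1,b_2}$). In the first stage one exploits that every $P\in\mathcal{P}_o^3$ has a triangulation into simplices of $\mathcal{T}_o^3$: by inclusion--exclusion for valuations, $Z$ is determined by its restriction to $\mathcal{T}_o^3$, and conversely it is enough to match $Z$ with the claimed operator on $\mathcal{T}_o^3$, that operator being a valuation on all of $\mathcal{P}_o^3$ by the sufficiency step. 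Lower-dimensional polytopes are then caught by restriction; note that $M^\pm P=\{o\}$ whenever $\dim P<3$, so on such $P$ the value of $Z$ is carried entirely by the $D$-part.

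In the second stage one determines $ZT$ for $T\in\mathcal{T}_o^3$. For the standard $d$-simplex $T^d=[o,e_1,\dots,e_d]$ ($d=1,2,3$), covariance under the subgroup of $\sln$ mapping $T^d$ onto itself confines $ZT^d$ to a small family; combining this with the Minkowski valuation identity (\ref{107}) applied to a dissection of a simplex into two simplices related by suitable unipotent elements of $\sln$ produces Cauchy-type functional equations. Their solutions give, on the $3$-simplex, Minkowski combinations of $M^+T^3$, $M^-T^3$, $T^3$, $-T^3$, by essentially the same computation that underlies the $n\ge 4$ case of Theorem~\ref{thm1.6}; on the $2$-simplex they give a strictly larger family, because the $\sln$ symmetries available to a $2$-simplex sitting in $\mathbb{R}^3$ differ from those of $\operatorname{SL}(2)$ acting on $\mathbb{R}^2$. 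Reconciling the $3$-dimensional data with the $2$-dimensional data along simplices that have a $2$-face in a coordinate plane isolates the coefficients $c_1,c_2$ and the parameters $a_1,a_2,b_1,b_2$ and identifies $Z$ on $\mathcal{T}_o^3$ with $c_1 M^+ + c_2 M^- + D_{a_1,a_2,b_1,b_2}$. In the third stage this identity is propagated to all of $\mathcal{P}_o^3$ via the reduction of stage one, and applying $ZP\in\mathcal{K}_o^3$ to a short list of test polytopes (for instance a tetrahedron and a triangular prism) forces $c_1,c_2\ge 0$ together with the four inequalities, which closes the equivalence.

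\textbf{The main obstacle} is the $2$-dimensional part of the second stage and the global consistency it demands. One must carry out the planar-type $\sln$ analysis for $2$-simplices embedded in $\mathbb{R}^3$, where the symmetries genuinely differ from those of $\operatorname{SL}(2)$ on $\mathbb{R}^2$, and then prove that the extra freedom it uncovers assembles coherently along \emph{every} $2$-face of \emph{every} $3$-polytope --- not just on simplices --- so that it is captured exactly by the single operator $D_{a_1,a_2,b_1,b_2}$ and by nothing larger. Establishing this global consistency, and extracting from it the sharp inequalities $a_2-a_1\le b_2$ and $b_2-b_1\le a_2$, is the delicate step; the remainder is a routine, if lengthy, adaptation of the arguments already developed for Theorems~\ref{thm1.4} and \ref{thm1.6}.
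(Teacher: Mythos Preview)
Your overall architecture is reasonable, but the diagnosis of \emph{where} the extra parameters live is wrong, and this misreads the mechanism that separates $n=3$ from $n\ge 4$.

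You assert that on the $3$-simplex $T^3$ the Cauchy-type functional equations yield only Minkowski combinations of $M^+T^3$, $M^-T^3$, $T^3$, $-T^3$, ``by essentially the same computation that underlies the $n\ge 4$ case,'' and that the extra two degrees of freedom then appear first on $2$-simplices. This is not what happens. The dissection equations on $T^d$ (for every $d$, including $d=n$) have the \emph{same} solution set in all dimensions: the six-parameter family $c_1 h_{M^+T^n}+c_2 h_{M^-T^n}+\Phi_{1;a_1,a_2}T^n+\Phi_{1;b_1,b_2}(-T^n)$, and $\Phi_{1;a_1,a_2}T^3$ is \emph{not} of the form $c\,h_{T^3}$ unless $a_1=a_2$ (it involves the face and edge terms $\sum_F h_F$, $\sum_E h_E$). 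In the paper this is Lemma~\ref{lem5.4}, proved via the function-valued valuation $P\mapsto h_{ZP}$ rather than directly on convex bodies. What changes for $n\ge 4$ is a \emph{separate} step (Lemma~\ref{lem5.1}): sublinearity is tested on the polytope $[-e_1,e_1,e_2,e_3,e_4]$, which forces $a_1=a_2$. In $\mathbb{R}^3$ no such test polytope exists, and sublinearity yields only the four inequalities. So the extra freedom is not produced by the $2$-simplex analysis and then ``reconciled'' with a rigid $3$-simplex picture; it is present on $T^3$ from the start, and only the \emph{constraint} that would kill it is missing. Your stage-two plan, as written, would compute the wrong family on $T^3$ and then look for parameters in the wrong place.

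On sufficiency, the hard point is showing that $h_{D_{a_1,a_2,b_1,b_2}P}$ is sublinear for an arbitrary $P\in\mathcal{P}_o^3$, not just a simplex. ``Writing out the support function over a triangulation and verifying sublinearity'' does not work directly: a sum of support functions minus other support functions need not be sublinear, and triangulating does not help. The paper handles this by rewriting $h_{D_{a_1,a_2,b_1,b_2}P}$ as $h_{P_2}-h_{P_1}$ with $P_1=(a_2-a_1)\sum_{E\in\mathcal{E}_o(P)}E+(b_2-b_1)\sum_{E\in\mathcal{E}_o(P)}(-E)$ and a suitable $P_2$, and then proving that $P_1$ is a \emph{summand} of $P_2$ via Schneider's criteria (reduction to $2$-dimensional projections, then an edge-containment check). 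The four inequalities enter exactly here. This summand argument is the substantive content of the sufficiency proof and is absent from your sketch.
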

The convex body $D_{a_1,a_2,b_1,b_2} P$ is a generalization of the difference body. We remark that it was omitted in the classifcation by Ludwig \cite[Theorem 1]{Lud05}.
Denote by $\mathcal {E}_o (P)$ the set of edges of $P$ that contain the origin and by $\mathcal {F}_o (P)$ the set of $2$-dimensional faces of $P$ that contain the origin.
For $P \in \mathcal {P}_o ^3$,
\begin{align*}
h_{D_{a_1,a_2,b_1,b_2} P}
&= a_1 h_P + (a_2-a_1) \sum_{F \in \mathcal {F}_o (P)}h_{F} - (a_2-a_1) \sum_{E \in \mathcal {E}_o (P)}h_{E} \\
&\qquad + b_1 h_{-P} + (b_2-b_1) \sum_{F \in \mathcal {F}_o (P)}h_{-F} - (b_2-b_1) \sum_{E \in \mathcal {E}_o (P)}h_{-E}
\end{align*}
if $\dim P =3$;
\begin{align*}
h_{D_{a_1,a_2,b_1,b_2} P}
&= (2a_2-a_1) h_P - (a_2-a_1) \sum_{E \in \mathcal {E}_o (P)}h_{E} \\
&\qquad +(2b_2-b_1) h_{-P} - (b_2-b_1) \sum_{E \in \mathcal {E}_o (P)}h_{-E}
\end{align*}
if $\dim P =2$; and
\begin{align*}
h_{D_{a_1,a_2,b_1,b_2} P}
&= a_1 h_P + b_1 h_{-P}
\end{align*}
if $\dim P =1$.
That $h_{D_{a_1,a_2,b_1,b_2} P}$ is a support function is guaranteed by the conditions on $a_1,a_2,b_1,b_2$.

Theorem \ref{thm1.4}, \ref{thm1.6} and \ref{thm1.7} are based on the classification of function-valued valuations (Lemma \ref{lem5.4}).
The map $Z: \mathcal{P}_o ^n \to \mathcal{K}_o ^n$ is an $L_p$ Minkowski valuation if and only if $\Phi : P \mapsto h_{ZP}^p$ is a function-valued valuation; see Section \ref{Slp} for more details.
There exist additional \emph{complicated} function-valued valuations ($P \mapsto \Phi_{p;a_1,a_2}P + \Phi_{p;b_1,b_2}(-P)$; see the definition in Section \ref{Slp}) if we do not assume continuity like Haberl \cite{Hab12b} and Parapatits \cite{Par14b} did. However, in generally, they are not $L_p$ Minkowski valuations for $p>1$.
For $p=1$, $h_{D_{a_1,a_2,b_1,b_2} P} = \Phi_{1;a_1,a_2}P + \Phi_{1;b_1,b_2}(-P)$ for $\dim P \leq 3$. For $n \geq 4$, $P \mapsto \Phi_{1;a_1,a_2}P + \Phi_{1;b_1,b_2}(-P)$ is also a function-valued valuation on $\mathcal {P}_o^n$.  But the example used for $n \geq 4$ and $p = 1$ in Lemma \ref{lem5.1} shows that $\Phi_{1;a_1,a_2}[-e_1,e_1,e_2,e_3,e_4] + \Phi_{1;b_1,b_2}(-[-e_1,e_1,e_2,e_3,e_4])$ is not a support function.
That means $D_{a_1,a_2,b_1,b_2}$ even cannot be extended to simplices that contain the origin in one of their edges for dimension greater than or equal to $4$.
However, Theorem \ref{thm1.5} shows that $D_{a_1,a_2,b_1,b_2}$ can be extended to a valuation on $\mathcal {T}_o^n$ also for $n \geq 4$.


\section{Preliminaries and Notation}\label{s2}
Let $\mathbb{R}^n$ be the $n$-dimensional Euclidean space and $\{e_i\}_{i=1}^n$ its standard basis. For $1 \leq d \leq n-1$, we will also use $\mathbb{R}^d$ to denote the linear space spanned by $\{ e_1, \dots, e_d \}$. The usual scalar product of two vectors $x,y \in \mathbb{R}^n$ shall be denoted by $x \cdot y$. The convex hull of a set $A \subset \mathbb{R}^n$ is denoted by $[A]$.

Let $a,b \in \mathbb{R}$. We write $a \vee b := \max \{a,b \}$.

Let $\mathcal {K}^n$ be the set of convex bodies in $\mathbb{R}^n$. For $K \in \mathcal {K}^n$, $\text{relint}\,K$, $\text{relbd}\, K$, $K^c$ and $\lin K$ denote the relative interior, the relative boundary, the relative complement with respect to the affine hull of $K$, and the linear hull of $K$, respectively. We mention that $\text{relint}\,K \neq \emptyset$ if $K \neq \emptyset$.

Let $Gr(n,j)$ be the set of $j$-dimensional linear subspaces in $\mathbb{R}^n$.
For $x \in \mathbb{R}^n$, $A \subset \mathbb{R}^n$, $V \in Gr(n,j)$, let $x|V$ be the orthogonal projection of $x$ onto $V$ and $A|V = \{x|V: x \in A \}$. We also write $x|K$ for the orthogonal projection of $x$ onto the linear hull of $K \in \mathcal {K}_o^n$.

The \emph{support function} of a convex body $K$ is defined by
\begin{align*}
h_K(x) = \max \{ x \cdot y : y \in K\}
\end{align*}
for any $x \in \mathbb{R}^n$. The support function is sublinear, i.e., it is homogeneous,
\begin{align*}
h_K(\lambda x) = \lambda h_K(x)
\end{align*}
for any $x \in \mathbb{R}^n$, $\lambda \geq 0$, and subadditive,
\begin{align*}
h_K(x+y) \leq h_K(x) + h_K(y)
\end{align*}
for any $x,y \in \mathbb{R}^n$. The support function is also continuous on $\mathbb{R}^n$ by its convexity. A convex body is uniquely determined by its support function, and for any sublinear function $h$, there exists a convex body $K$ such that $h_K = h$. It is easy to see that
\begin{align}\label{210}
h_{\lambda K} = \lambda h_K
\end{align}
for any $\lambda \geq 0$ and $K \in \mathcal {K}^n$. Also,
\begin{align*}
h_{\phi K} (x) = h_K (\phi^t x)
\end{align*}
for $x \in \tr^n$, $\phi \in \gln$ and $K \in \mathcal {K}^n$.

For $K, L \in \mathcal {K}^n$, if $K \cup L$ is convex, then
$$h_{K \cup L} = \max \{h_K, h_L \}, ~~h_{K \cap L} = \min \{h_K, h_L \}.$$
Hence the identity map is an $L_p$ Minkowski valuation on $\mathcal {K}^n$ (or on $\mathcal {P}_o^n$).

The \emph{face} of $K \in \mathcal {K}^n$ with normal vector $u \in S^{n-1}$ is $F(K,u) = \{y \in K: y \cdot u = h_K(u) \}$.

A \emph{hyperplane} $H$ through the origin with a normal vector $u$ is defined by $\{ x \in\mathbb{R}^n : x \cdot u = 0 \}$. Furthermore define $H^- := \{ x \in\mathbb{R}^n : x \cdot u \leq 0 \}$ and $H^+ := \{ x \in\mathbb{R}^n : x \cdot u \geq 0 \}$. For $0 < \lambda < 1$, let $H_\lambda$ be the hyperplane through the origin with normal vector $(1-\lambda) e_1- \lambda e_2$.

The following $\sln$ transforms $\phi_1,\phi_2,\phi_3,\phi_4$ depending on $\lambda$, $0 < \lambda < 1$, will be useful.
$$\phi _1 e_1 = \lambda e_1 + (1-\lambda) e_2,~\phi _1 e_2 = e_2,~\phi _1 e_n = \frac{1}{\lambda} e_n,~\phi _1 e_i = e_i,~\text{for}~3 \leq i \leq n-1,$$
$$\phi _2 e_1 = e_1,~\phi _2 e_2 = \lambda e_1 + (1-\lambda) e_2,~\phi _2 e_n = \frac{1}{1-\lambda} e_n,~\phi _2 e_i = e_i,~\text{for}~3 \leq i \leq n-1,$$
$$\phi_3 e_1 = (\frac{1}{\lambda})^{1/n} (\lambda e_1 + (1-\lambda) e_2),~\phi_3 e_2 = (\frac{1}{\lambda})^{1/n} e_2,~\phi_3 e_i = (\frac{1}{\lambda})^{1/n} e_i,~\text{for}~3 \leq i \leq n,$$
and
$$\phi_4 e_1 = (\frac{1}{1-\lambda})^{1/n} e_1,~\phi_4 e_2 = (\frac{1}{1-\lambda})^{1/n} (\lambda e_1 + (1-\lambda) e_2),
\phi_4 e_i = (\frac{1}{1-\lambda})^{1/n} e_i,~\text{for}~3 \leq i \leq n.$$

For $1 \leq d \leq n$, let $T^d = [o,e_1,e_2,e_3, \dots,e_d]$ and $\hat{T}^{d-1} = [o,e_1,e_3 \dots,e_d]$. Hence, for $s > 0$,
$sT^{d}\cap H_\lambda ^- = \phi _1 sT^{d}$, $sT^{d}\cap H_\lambda ^+ = \phi _2 sT^{d}$ and $sT^d \cap H_\lambda =\phi _1 s\hat{T}^{d-1}$ for $2 \leq d \leq n-1$. Also,
$sT^{n}\cap H_\lambda ^- = \phi_3 \lambda^{1/n} sT^{n}$, $sT^{n}\cap H_\lambda ^+ = \phi_4 (1-\lambda)^{1/n}sT^{n}$ and $sT^n \cap H_\lambda =\phi _1 \lambda^{1/n}s\hat{T}^{n-1}$.

The \emph{asymmetric $L_p$ moment body} of a star body $K$ is defined by
\begin{align*}
h_{M_p^+ K} (x) = \left(\int_K (\max \{x \cdot y,0\})^p dy \right)^{1/p},~~x \in \mathbb{R}^n
\end{align*}
and
\begin{align*}
h_{M_p^- K} (x) = \left(\int_K (\max \{-x \cdot y,0\})^p dy \right)^{1/p},~~x \in \mathbb{R}^n.
\end{align*}
Both $M_p^+,M_p^-$ are $\sln$ covariant $L_p$ Minkowski valuations. Positive combinations of $M_p^+$ and $M_p^-$ were first characterized as $(\frac{n}{p}+1)$-homogeneous and $\sln$ covariant $L_p$ Minkowski valuations by Ludwig \cite{Lud05}. Also see Theorem \ref{lpco}.
For $\dim K = n$,
\begin{align*}
h_{M_\infty^+ K} (x) = \lim\limits_{p \to \infty} h_{M_p^+ K} (x) = \max_{y \in K} \{x \cdot y \} = h_K (x),~~x \in \mathbb{R}^n
\end{align*}
and for $\dim K <n$, $M_\infty^+ K = \{o\}$.


The \emph{projection body} of $K \in \mathcal {K}^n$ is defined by
\begin{align*}
h_{\Pi K} (x) = \frac{1}{2} \int_{S^{n-1}} |x \cdot u| dS_K(u), ~~x \in \mathbb{R}^n,
\end{align*}
where $S_K$ is the \emph{surface area measure} of $K$. For a Borel set $\omega \subset S^{n-1}$, $S_K(\omega)$ is the $(n-1)$-Hausdorff measure of $\{x \in \operatorname*{bd} K: \nu_K(x) \in \omega \}$, where $\nu_K(x)$ are outer normal vectors to $K$ at $x$.

The \emph{cone-volume measure} of $K \in \mathcal{K}_o^n$ is defined by $dv_K (u) = h_K (u) d S_K(u)$. The \emph{asymmetric $L_p$ projection body} of $P \in \mathcal {P}_o ^n$ is defined by
\begin{align*}
h_{\hat{\Pi}_p^+ P} (x) = \left(\int_{S^{n-1} \setminus \mathcal {N}_o (P)} (\frac{\max \{x \cdot u,0\}}{h_P(u)})^p dv_P(u)\right)^{1/p}
\end{align*}
for any $x \in \mathbb{R}^n$ and
\begin{align*}
h_{\hat{\Pi}_p^- P} (x) = \left(\int_{S^{n-1} \setminus \mathcal {N}_o (P)} (\frac{\max \{-x \cdot u,0\}}{h_P(u)})^p dv_P(u)\right)^{1/p} = h_{\hat{\Pi}_p^+ P} (-x)
\end{align*}
for any $x \in \mathbb{R}^n$. Positive combinations of $\hat{\Pi}_p^+$ and $\hat{\Pi}_p^-$ were first characterized as $(\frac{n}{p}-1)$-homogeneous, $\sln$ contravariant $L_p$ Minkowski valuations by Ludwig \cite{Lud05}. Also see Theorem \ref{Lpcontra}. For $p=1$, $\Pi_o$ defined by $h_{\Pi_o P} = h_{\Pi P} - h_{\hat{\Pi}^+ P}$ is an additional valuation.


When $p \to \infty$, we have
$$\lim\limits_{p \to \infty} h_{\hat{\Pi}_p^+ P} (x) = \max_{u_i \in \mathcal{N} (P) \setminus \mathcal{N}_o (P) } \{\frac{x \cdot u_i}{h_P(u_i)},0\} = h_{\hat{\Pi}_\infty^+ P} (x).$$
Hence $\hat{\Pi}_\infty^+$ is a $(-1)$-homogeneous, $\sln$ contravariant $L_\infty$ Minkowski valuation. For $K \in \mathcal {K}^n$ containing the origin in its interior,
\begin{align*}
\lim\limits_{p \to \infty} h_{\hat{\Pi}_p^+ K} (x) = \lim\limits_{p \to \infty} \left(\int_{S^{n-1}} (\frac{\max \{x \cdot u,0\}}{h_K(u)})^p dv_K(u)\right)^{1/p} = \operatorname*{ess~sup}_{u \in S^{n-1}} \frac{x \cdot u}{h_K(u)}.
\end{align*}
Here the essential supremum is with respect to the cone-volume measure. We have
\begin{align*}
\frac{x \cdot u}{h_K(u)} = \frac{1}{\rho_K(x)} \frac{\rho_K(x) x \cdot u}{h_K(u)} \leq  \frac{1}{\rho_K(x)} \frac{\rho_K(x) x \cdot u}{\rho_K(x) x \cdot u} = \frac{1}{\rho_K(x)},
\end{align*}
where equality holds when $h_K(u) = \rho_K(x) x \cdot u$. Here $\rho_K(x):= \max \{\lambda>0: \lambda x \in K \}$ is the radial function of $K$. Also since there exists a normal vector $u$ at $\rho_K(x) x$ such that $u \in \text{supp}~v_K$, the support set of $v_K$, and $u \mapsto \frac{x \cdot u}{h_K(u)}$ is continuous,
$$h_{\hat{\Pi}_\infty^+ K} (x) = \operatorname*{ess~sup}_{u \in S^{n-1}} \frac{x \cdot u}{h_K(u)} = \frac{1}{\rho_K(x)} = h_{K^*}(x).$$

%
%

The following lemma will be used to classify $L_\infty$ Minkowski valuations. It is an $L_\infty$ version of the Cauchy functional equation.
\begin{lem}\label{cauchy}
If a function $f:(0,\infty) \to [0,\infty)$ satisfies
\begin{align}\label{52b}
f(x+y) \vee a = f(x) \vee f(y),
\end{align}
for any $x,y > 0$, where $a \geq 0$ is a constant, then
\begin{align*}
  f(z) = f(1) \geq a
\end{align*}
for any $z>0$.
\end{lem}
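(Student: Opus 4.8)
The plan is to extract from \eqref{52b} that $f$ is bounded below by $a$, then to upgrade the relation to an honest $\max$-type Cauchy equation, and finally to combine monotonicity with dyadic rescaling to force $f$ to be constant.

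First I would put $x = y$ in \eqref{52b}, obtaining $f(2x) \vee a = f(x)$ for every $x > 0$; in particular $f(x) \geq a$ and $f(x) \geq f(2x)$ for all $x > 0$. Since this already shows $f(x+y) \geq a$, the maximum with $a$ on the left-hand side of \eqref{52b} can be dropped, so \eqref{52b} is equivalent to
\[
f(x+y) = f(x) \vee f(y), \qquad x, y > 0.
\]
From this I read off two facts. Taking $z > x > 0$ and writing $z = x + (z-x)$ gives $f(z) = f(x) \vee f(z-x) \geq f(x)$, so $f$ is non-decreasing on $(0,\infty)$. Taking $y = x$ gives $f(2x) = f(x)$, and iterating yields $f(2^k x) = f(x)$ for every integer $k \geq 0$.

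To conclude, fix arbitrary $x, y > 0$ and choose $k \in \mathbb{N}$ with $2^k x \geq y$. Monotonicity together with $f(2^k x) = f(x)$ gives $f(x) \leq f(y) \leq f(2^k x) = f(x)$, hence $f(y) = f(x)$. Therefore $f$ is constant, necessarily equal to $f(1)$, and $f(1) \geq a$ by the first step. The argument is short and I do not anticipate a genuine obstacle; the one point that must be spotted is that the pointwise bound $f \geq a$ is precisely what is needed to remove the $\vee a$ and reduce \eqref{52b} to the familiar form $f(x+y) = f(x) \vee f(y)$, after which non-decreasing plus $f(2x)=f(x)$ does the rest.
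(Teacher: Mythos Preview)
Your argument is correct and cleaner than the paper's. The key observation you make---that $f(2x)\vee a=f(x)$ forces $f\geq a$ everywhere, so the $\vee a$ can be dropped from \eqref{52b}---is precisely the simplification the paper does not exploit. The paper instead carries the $\vee a$ throughout: it first shows by a two-sided induction that $f(2^k)=f(1)$ for every integer $k$, with a case split on whether $a<f(1)$ or $a=f(1)$, and then squeezes a general $z$ between consecutive dyadic points via two further applications of \eqref{52b}, again distinguishing the two cases. Your route avoids all case analysis: once \eqref{52b} reads $f(x+y)=f(x)\vee f(y)$, monotonicity together with $f(2x)=f(x)$ forces constancy in a couple of lines.

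One cosmetic point: in the final sandwich $f(x)\leq f(y)\leq f(2^k x)=f(x)$, the first inequality uses $x\leq y$, which you have not assumed. Either insert a ``without loss of generality $x\leq y$'' (you are proving $f(x)=f(y)$, so this is harmless), or note that the symmetric choice of $k'$ with $2^{k'}y\geq x$ gives $f(x)\leq f(2^{k'}y)=f(y)$. The fix is immediate and does not affect the substance of the argument.
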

\begin{proof}
For $x=y=1$ in (\ref{52b}), we directly get $f(1) \geq a$.
We will prove $f(z) = f(1)$ in two steps.

Step \ding {172}: Let $k$ be an integer. We will show, by induction, that
\begin{align}\label{b1}
f(2^k) = f(1).
\end{align}

The case $k=0$ is trivial.
Taking $x=y=2^k$ in (\ref{52b}), we get
\begin{align}\label{b3}
f(2^{k+1}) \vee a = f(2^k) \vee f(2^k).
\end{align}
for any integer $k$.
Hence $$a \leq f(2^k)$$
for any $k$.
For $k \geq 1$, assume that (\ref{b1}) holds for $k-1$.
By (\ref{b3}),
if $a<f(1)$, we have $$f(2^k) = f(2^{k-1}) = f(1);$$
if $a=f(1)$, we have $$f(2^k) \leq f(2^{k-1}) = f(1) = a \leq f(2^k).$$
Thus, (\ref{b1}) holds for $k \geq 1$.

For $k \leq -1$, assume that (\ref{b1}) holds for $k+1$.
Since (\ref{b3}) and $a \leq f(1)$, we have $$f(2^k) = f(2^{k+1}) = f(1).$$
Thus we obtain that (\ref{b1}) holds for any integer $k$.

Step \ding {173}: Let $z >0$. There exists an integer $k$ such that $2^k \leq z < 2^{k+1}$.
Taking $x+y=2^{k+1}$, $x = z$ in (\ref{52b}), we obtain that
\begin{align*}
f(2^{k+1}) \vee a = f(z) \vee f(2^{k+1} - z).
\end{align*}
Since $a \leq f(1)$ and
$f(2^{k+1})= f(1)$ (step \ding {172}), we have
\begin{align}\label{33a}
f(z) \leq f(1).
\end{align}
for any $z >0$.

We assume $z \neq 2^k$.
If $a < f(1)$, taking $x+y=z$, $x=2^k$ in (\ref{52b}),
we obtain that
\begin{align*}
f(z) \vee a = f(2^k) \vee f(z-2^k).
\end{align*}
By (\ref{33a}), $f(z-2^k) \leq f(1)$.
Also since $f(2^k) = f(1)$ from step \ding {172},
we have $$f(z) = f(1).$$
If $a = f(1)$, taking $x=y = z$ in (\ref{52b}), we get
\begin{align*}
f(2z) \vee a = f(z) \vee f(z).
\end{align*}
Then, we have
$$f(1) = a \leq f(z) \leq f(1).$$
The proof is complete.
\end{proof}

The following statements will be used to determine $L_\infty$ Minkowski valuations by their values on $\mathcal {T}_o^n$.

Define $\mathcal{P}_1 := \mathcal {T}_o^n$ and $\mathcal{P}_i := \mathcal{P}_{i-1} \cup \{ P_1 \cup P_2 \in \mathcal {P}_o^n: P_1,P_2 \in \mathcal{P}_{i-1} ~\text{with~disjoint~relative~interiors}\}$ recursively. Note that for any $P \in \mathcal {P}_o^n$, there exists an $i$ such that $P \in \mathcal{P}_{i}$.

Let $H \subset \mathbb{R}^n$ be a hyperplane through the origin. For any $P \in \mathcal{P}_{i}$, $i \geq 1$, we also have
\begin{align}\label{303}
P \cap H \in \mathcal{P}_{i}.
\end{align}
Indeed, for any $T \in \mathcal {T}_o^n$, we have $T \cap H \in \mathcal {T}_o^n$. Assume that for any $P \in \mathcal{P}_{i-1}$, $i \geq 2$, we have $P \cap H \in \mathcal{P}_{i-1}$. Then for any $P = P_1 \cup P_2$, where $P_1,P_2 \in \mathcal{P}_{i-1}$ have disjoint relative interiors, we have
$$P \cap H = (P_1 \cap H) \cup (P_2 \cap H).$$
If $P_1 \cap H$ and $P_2 \cap H$ have disjoint relative interiors, then $P \cap H \in \mathcal{P}_{i}$. Otherwise, only two possibilities could happen: $(P_1 \cap H) \subset (P_2 \cap H)$ and $(P_2 \cap H) \subset (P_1 \cap H)$. For both possibilities, we have $P \cap H \in \mathcal{P}_{i-1} \subset \mathcal{P}_i$.

\section{$\sln$ contravariant $L_\infty$ Minkowski valuations}
In this section, we first show that any $\sln$ contravariant $L_\infty$ \text{Minkowski} valuation on $\mathcal{T}_o ^n$ vanishes on lower dimensional simplices in $\mathcal{T}_o ^n$.

\begin{lem}\label{lem3.1}
Let $n \geq 3$. If $Z : \mathcal{T}_o ^n \to \mathcal{K}_o ^n$ be an $\sln$ contravariant $L_\infty$ Minkowski valuation, then $ZT= \{o \}$ for any $T \in \mathcal{T}_o ^n$ satisfying $\dim T < n$.
\end{lem}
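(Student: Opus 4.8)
The plan is to exploit $\sln$ contravariance together with the valuation property on carefully chosen dissections of a simplex, pushing the dimension of the "support" of $Z$ down step by step until it collapses to $\{o\}$. Since $Z$ is $\sln$ contravariant, for any $T \in \mathcal{T}_o^n$ with $\dim T = d < n$ it suffices to treat one representative, e.g. $T = s T^d$ for some $s>0$, because every $d$-dimensional simplex in $\mathcal{T}_o^n$ with the origin as a vertex is the image of $sT^d$ under a suitable $\phi \in \sln$, and then $Z(\phi\, sT^d) = \phi^{-t} Z(sT^d)$. So the task reduces to showing $Z(sT^d) = \{o\}$ for all $1 \le d \le n-1$ and all $s > 0$.

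First I would set up the fundamental dissection identity. Using the hyperplane $H_\lambda$ through the origin with normal $(1-\lambda)e_1 - \lambda e_2$ and the transforms $\phi_1, \phi_2$ recorded in Section \ref{s2}, we have for $2 \le d \le n-1$ the decomposition $sT^d = (sT^d \cap H_\lambda^-) \cup (sT^d \cap H_\lambda^+)$ with $sT^d \cap H_\lambda^- = \phi_1 sT^d$, $sT^d \cap H_\lambda^+ = \phi_2 sT^d$, and intersection $\phi_1 s\hat T^{d-1}$. All four of these are simplices in $\mathcal{T}_o^n$, so the $L_\infty$ Minkowski valuation property (\ref{107}) applies:
\begin{align*}
Z(sT^d) +_\infty Z(\phi_1 s\hat T^{d-1}) = Z(\phi_1 sT^d) +_\infty Z(\phi_2 sT^d).
\end{align*}
Applying contravariance, $Z(\phi_i sT^d) = \phi_i^{-t} Z(sT^d)$ and similarly for $\hat T^{d-1}$. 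Translating to support functions via $h_{\phi K}(x) = h_K(\phi^t x)$ and $h_{K +_\infty L} = h_K \vee h_L$, this becomes a functional identity in which the dependence on $\lambda$ is explicit. The strategy is then: (i) an induction on $d$ — handle the base case $d=1$ (where $sT^1 = [o, se_1]$ is a segment and one shows directly, using an $\sln$ map fixing $e_1$ but scaling other coordinates, that $Z[o,se_1]$ is forced to be $\sln$-invariant hence $\{o\}$ by (\ref{37})), then for $2 \le d \le n-1$ use the identity together with the inductive hypothesis $Z(s\hat T^{d-1}) = \{o\}$ (note $\hat T^{d-1}$ is $(d-1)$-dimensional). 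With that term gone, $Z(sT^d) +_\infty \{o\} = Z(sT^d)$, so we get $h_{Z(sT^d)}(x) = h_{Z(sT^d)}(\phi_1^t x) \vee h_{Z(sT^d)}(\phi_2^t x)$ for every $\lambda$ and every $x$.

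The heart of the argument — and the main obstacle — is extracting $Z(sT^d) = \{o\}$ from this one-parameter family of $\vee$-identities. The approach is to specialize $x$ to a direction on which $\phi_1^t$ and $\phi_2^t$ act by scaling. From the definitions, $\phi_1^t$ and $\phi_2^t$ involve the vector $e_1^* $-type functionals shifted by $\lambda$; evaluating at $x = e_n$ (or a vector in the span of $e_3,\dots,e_{n-1},e_n$ on which $\phi_1,\phi_2$ act diagonally) turns the identity into an $L_\infty$-Cauchy-type relation of the form $f(\text{something}) \vee a = f(\cdot) \vee f(\cdot)$, at which point Lemma \ref{cauchy} forces $h_{Z(sT^d)}$ to be constant along a ray — but a support function constant and positive along a ray through and away from the origin is impossible unless it is identically $0$ there. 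More robustly, one iterates the two-piece identity (as in the $\mathcal{P}_i$ filtration of Section \ref{s2}, equation (\ref{303})): repeatedly subdividing $sT^d$ by hyperplanes through the origin and using that the lower-dimensional slices all have $Z = \{o\}$ by induction, one shows $h_{Z(sT^d)}$ satisfies a homogeneity forcing it to scale incompatibly with being a support function unless it vanishes. I expect the delicate points to be: checking that all intermediate sets genuinely lie in $\mathcal{T}_o^n$ (the origin remains a vertex), and ruling out a nonzero constant solution using the $L_\infty$-Cauchy lemma together with the fact that $h_{Z(sT^d)}$ must be sublinear and satisfy $h_{Z(sT^d)}(o)=0$. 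Once $Z(sT^d) = \{o\}$ for the standard simplex, contravariance and $Z\{o\} = \{o\}$ (from (\ref{37})) immediately give $ZT = \{o\}$ for every $T \in \mathcal{T}_o^n$ with $\dim T < n$.
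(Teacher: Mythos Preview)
Your inductive scheme has a real gap in the step $d-1 \Rightarrow d$. After killing the intersection term you obtain (note: contravariance gives $h_{Z(\phi K)}(x)=h_{ZK}(\phi^{-1}x)$, so your $\phi_i^t$ should be $\phi_i^{-1}$)
\[
h_{Z(sT^d)}(x)=h_{Z(sT^d)}(\phi_1^{-1}x)\vee h_{Z(sT^d)}(\phi_2^{-1}x),\qquad 0<\lambda<1,
\]
and specializing to $x=e_n$ indeed yields $h_{Z(sT^d)}(e_n)=\lambda h_{Z(sT^d)}(e_n)\vee(1-\lambda)h_{Z(sT^d)}(e_n)$, hence $h_{Z(sT^d)}(\pm e_n)=0$. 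But that is only one pair of directions; for every other $x$ the maps $\phi_1^{-1},\phi_2^{-1}$ mix the first two coordinates nontrivially and no ``$L_\infty$ Cauchy'' relation drops out. Your appeal to Lemma~\ref{cauchy} is misplaced (the identity at $e_n$ is already the trivial $c=\lambda c\vee(1-\lambda)c$, no Cauchy lemma needed), and the ``more robustly, iterate subdivisions'' paragraph is not an argument: further hyperplane cuts through the origin produce the same type of identity with the special direction still tied to $e_n$. As written, you have not shown $h_{Z(sT^d)}$ vanishes in directions other than $\pm e_n$. (Your base case $d=1$ is essentially right, but the phrasing is off: $Z[o,se_1]$ is not $\sln$-invariant, only invariant under $\phi^{-t}$ for $\phi$ in the stabilizer of $[o,se_1]$; one then argues directly that a compact set invariant under that subgroup must be $\{o\}$, rather than invoking~(\ref{37}).)

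The paper closes this gap by using the stabilizer of $T$ much more aggressively than you do. For $\dim T=d\le n-2$ one takes $\phi=\begin{pmatrix}I_d&A\\0&B\end{pmatrix}\in\sln$ with $B\in\operatorname{SL}(n-d)$; these fix $T$, so $h_{ZT}(x)=h_{ZT}(\phi^{-1}x)$. Since $n-d\ge 2$, the block $B$ can send the last $n-d$ coordinates of $x$ to any nonzero vector, and then $A$ makes the first $d$ coordinates arbitrary, so $h_{ZT}$ is constant on a dense set and therefore identically $0$---no dissection, no induction. Only for $d=n-1$ is the stabilizer too small: it yields merely $h_{ZT^{n-1}}(x)=h_{ZT^{n-1}}(x_ne_n)$, and \emph{then} the single dissection by $H_\lambda$ (with the intersection term already $0$ by the $d=n-2$ case) finishes via the one-line identity above. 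If you want to keep an induction on $d$, you must still run this stabilizer reduction at each step before invoking the dissection; the dissection alone does not carry the induction.
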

\begin{proof}
Let $T \in \mathcal{T}_o ^n$ and $\dim T =d <n$. We can assume (w.l.o.g.) that the linear hull of $T$ is $ \lin \{ e_1, \dots, e_d \}$, the linear space spanned by $ \{ e_1, \dots, e_d \}$. Let $\phi := \left[ {\begin{array}{*{20}{c}}
I&A \\
0&B
\end{array}} \right] \in \sln$, where $I \in \mathbb{R}^{d \times d}$ is the identity matrix, $A \in \mathbb{R}^{d \times (n-d)}$ is an arbitrary matrix, $B \in \mathbb{R}^{(n-d) \times (n-d)}$ is a matrix with det $B = 1$, $0 \in \mathbb{R}^{(n-d)\times d}$ is the zero matrix. Also let $x = \left( {\begin{array}{*{20}{c}}
{x'}\\
{x''}
\end{array}} \right) \in \mathbb{R}^{d \times (n-d)}$ and $x'' \neq 0$. Then $\phi T = T$, and with the $\sln$ contravariance of $Z$, we have
\begin{align*}
h_{ZT} (x) = h_{Z \phi T} (x) = h_{ZT}(\phi ^{-1} x) = h_{ZT} \left( {\begin{array}{*{20}{c}}
{x' - AB^{-1}x''}\\
{B^{-1}x''}
\end{array}} \right).
\end{align*}

For $d \leq n-2$, we can choose an suitable matrix $B$ such that $B^{-1}x''$ be any nonzero vector on $\lin \{ e_{d+1},\dots,e_n \}$. After fixing $B$ we can also choose an suitable matrix $A$ such that $x' - AB^{-1}x''$ is any vector in $\lin \{ e_1, \dots, e_d \}$. So $h_{ZT} (x)$ is constant on a dense set of $\mathbb{R}^n$. By the continuity of the support function, we get $h_{ZT} (x) = 0$.

For $d = n-1$, $B=1$. We can choose an suitable $A$ such that $x' - AB^{-1}x''=0$, and then $h_{ZT} (x) = h_{ZT} (x_n e_n)$, where $x_n$ is the $n$-th coordinate of $x$. By the $\sln$ contravariance of $Z$, we only need to show that $h_{Z(sT^{n-1})} (x) = h_{Z(sT^{n-1})} (x_ne_n)= 0$ for any $s > 0$.

For $0 < \lambda < 1$, define $H_\lambda$ and $\phi _1,\phi_2 \in \sln$ as in Section \ref{s2}. Since $Z$ is a valuation,
\begin{align*}
h_{Z(sT^{n-1})} (e_n) \vee h_{Z(sT^{n-1} \cap H_\lambda)} (e_n) = h_{Z(sT^{n-1}\cap H_\lambda ^-)} (e_n) \vee h_{Z(sT^{n-1} \cap H_\lambda ^+)} (e_n).
\end{align*}
From the conclusion above for $d=n-2$, we get
\begin{align*}
h_{Z(sT^{n-1})} (e_n) = h_{Z(sT^{n-1}\cap H_\lambda ^-)} (e_n) \vee h_{Z(sT^{n-1} \cap H_\lambda ^+)} (e_n).
\end{align*}
Also by the $\sln$ contravariance of $Z$, we obtain
\begin{align*}
h_{Z(sT^{n-1})} (e_n) &= h_{Z(\phi _1 sT^{n-1})} (e_n) \vee h_{Z(\phi _2 sT^{n-1})} (e_n) \\
&= h_{Z( sT^{n-1})} (\phi _1 ^{-1} e_n) \vee h_{Z (sT^{n-1})} (\phi _2 ^{-1} e_n) \\
&= h_{Z( sT^{n-1})} (\lambda e_n) \vee h_{Z (sT^{n-1})} ((1-\lambda) e_n).
\end{align*}

If $h_{Z(sT^{n-1})} (e_n) \neq 0$, we get
\begin{align*}
\lambda \vee (1-\lambda) = 1
\end{align*}
for any $0<\lambda <1$. This is a contradiction.
Hence, $h_{Z(sT^{n-1})} (e_n)= 0$ for any $s > 0$.
\end{proof}

The following lemma establishes a homogeneity property.
\begin{lem}\label{lem3.2}
Let $n \geq 3$. If $Z : \mathcal{P}_o ^n \to \mathcal {K}_o ^n$ is an $\sln$ contravariant $L_\infty$ Minkowski valuation, then
\begin{align}\label{101}
h_{Z(sT^{n})} (\pm e_i) = sh_{ZT^{n}} (\pm e_i),~~1 \leq i \leq n,
\end{align}
for any $s >0$.
\end{lem}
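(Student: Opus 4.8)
The plan is to exploit the dissection identities for $sT^n$ under the hyperplane $H_\lambda$ recorded in Section \ref{s2}, namely $sT^n \cap H_\lambda^- = \phi_3 \lambda^{1/n} sT^n$, $sT^n \cap H_\lambda^+ = \phi_4 (1-\lambda)^{1/n} sT^n$, and $sT^n \cap H_\lambda = \phi_1 \lambda^{1/n} s\hat T^{n-1}$, together with the valuation property applied to the pair $(sT^n \cap H_\lambda^-, sT^n \cap H_\lambda^+)$. Since $\hat T^{n-1}$ is a simplex of dimension $n-1 < n$ containing the origin as a vertex, by Lemma \ref{lem3.1} we have $Z(\phi_1 \lambda^{1/n} s\hat T^{n-1}) = \{o\}$, so the intersection term drops out and the valuation equation becomes
\begin{align*}
h_{Z(sT^n)}(x) = h_{Z(\phi_3 \lambda^{1/n} sT^n)}(x) \vee h_{Z(\phi_4 (1-\lambda)^{1/n} sT^n)}(x)
\end{align*}
for every $x \in \tr^n$. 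Using $\sln$ contravariance this reads $h_{Z(sT^n)}(x) = h_{Z(\lambda^{1/n} sT^n)}(\phi_3^t x) \vee h_{Z((1-\lambda)^{1/n} sT^n)}(\phi_4^t x)$; and since $\phi_3$, $\phi_4$ are explicit (up to the shear in the $e_1,e_2$ plane they are just dilations by $\lambda^{-1/n}$ and $(1-\lambda)^{-1/n}$), evaluating at $x = \pm e_i$ should, after computing $\phi_3^t(\pm e_i)$ and $\phi_4^t(\pm e_i)$, reduce the right-hand side to an expression in the values $h_{Z(rT^n)}(\pm e_j)$ at dilated parameters $r$.

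Concretely, writing $g_i(s) := h_{Z(sT^n)}(e_i)$ and $\tilde g_i(s) := h_{Z(sT^n)}(-e_i)$, the goal is to massage the displayed identity into a functional equation of Cauchy type in the single real variable $s$, so that Lemma \ref{cauchy} (or a simple variant) forces $g_i(s)$ and $\tilde g_i(s)$ to be linear in $s$. The key auxiliary step will be a change of variables: substitute $\lambda^{1/n} s = x$, $(1-\lambda)^{1/n} s = y$, so that $x, y > 0$ range over all pairs with $x^n + y^n = s^n$; combined with the scaling behaviour of the $\phi_3^t, \phi_4^t$ factors this should produce something of the form $g_i(s) = (\text{dilation factor})\, g_j(x) \vee (\text{dilation factor})\, g_k(y)$. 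One then needs to track carefully which index $j$ (resp. $k$) each $e_i$ is sent to: because $\phi_3^t$ and $\phi_4^t$ act on $e_1, e_2$ by a shear transpose, evaluating at $e_1$ or $e_2$ mixes the two directions, while evaluating at $e_i$ for $i \ge 3$ is clean. Handling the mixing for $i = 1, 2$ — and for the $-e_i$ cases — is where the bookkeeping concentrates; one likely needs to run the argument for $i \ge 3$ first, then feed that information back in to resolve $i = 1, 2$.

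The main obstacle I anticipate is precisely this index-mixing under the shears $\phi_3, \phi_4$: unlike the pure-dilation situation one would like, the transposed maps do not preserve the coordinate axes, so one cannot immediately isolate a one-variable functional equation for a fixed $i$. The remedy is to first establish \eqref{101} for $\pm e_n$ (where $\phi_3^t e_n = \lambda^{-1/n} e_n$ and $\phi_4^t e_n = (1-\lambda)^{-1/n} e_n$, giving a clean equation $g_n(s)\vee a = \lambda^{-1/n} g_n(\lambda^{1/n} s) \vee (1-\lambda)^{-1/n} g_n((1-\lambda)^{1/n}s)$ — or better, after absorbing the dilation factor, an exact $L_\infty$-Cauchy equation of the shape \eqref{52b} after writing $u = \lambda^{1/n} s$, $v = (1-\lambda)^{1/n} s$), invoke Lemma \ref{cauchy} to get $g_n(s) = s\, g_n(1)$, and similarly for $-e_n$ and for $e_i$, $i \ge 3$, by the symmetry of $T^n$ in the coordinates $e_2, \dots, e_n$ (or by choosing the analogous hyperplanes $H_\lambda$ adapted to other coordinate pairs). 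Finally, to reach $e_1$ and $e_2$, use an $\sln$ map permuting the relevant coordinates so that $T^n$ is carried to itself (up to the already-handled cases) and transfer the homogeneity. I expect the whole argument to be short once the right substitution reducing to \eqref{52b} is in place; the only real care needed is that the constant $a$ appearing in Lemma \ref{cauchy} is genuinely a constant independent of $s$, which it is because it comes from the (vanishing) contribution of the lower-dimensional slice.
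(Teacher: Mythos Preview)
Your approach is exactly the paper's: dissect $sT^n$ along $H_\lambda$, kill the intersection term via Lemma \ref{lem3.1}, apply contravariance, evaluate at $\pm e_n$, change variables to reach the $L_\infty$ Cauchy equation, and invoke Lemma \ref{cauchy}; then transfer to the remaining $e_i$ by an $\sln$ symmetry of $T^n$.

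Two small corrections. First, you have the contravariance formula wrong: $Z(\phi K)=\phi^{-t}ZK$ gives $h_{Z(\phi K)}(x)=h_{ZK}(\phi^{-1}x)$, not $h_{ZK}(\phi^t x)$ (the latter is the covariant case). So at $x=e_n$ the factor is $\phi_3^{-1}e_n=\lambda^{1/n}e_n$, not $\lambda^{-1/n}e_n$, and the function fed to Lemma \ref{cauchy} is $f(\lambda)=\lambda^{1/n}h_{Z(\lambda^{1/n}T^n)}(e_n)$ (with $a=0$, since the slice term vanishes outright). Second, your worry about index-mixing for $i=1,2$ is unnecessary: for $n\ge 3$ every $e_i$ can be mapped to $e_n$ by an \emph{even} coordinate permutation, which lies in $\sln$ and fixes $T^n$, so the paper disposes of all $i\neq n$ in one line before doing any computation.
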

\begin{proof}
Since $Z$ is $\sln$ contravariant, we only need to show that (\ref{101}) holds for $i=n$.

Define $H_\lambda$ and $\phi_3,\phi_4 \in \sln$ as in Section \ref{s2}. Since $Z$ is a valuation,
\begin{align*}
h_{Z(sT^{n})} (x) \vee h_{Z(sT^{n} \cap H_\lambda)} (x) = h_{Z(sT^{n}\cap H_\lambda ^-)} (x) \vee h_{Z(sT^{n} \cap H_\lambda ^+)} (x),
\end{align*}
for any $x \in \mathbb{R}^n$, $s>0$.
By Lemma \ref{lem3.1}, $h_{Z(sT^{n} \cap H_\lambda)} (x) = 0$. Thus,
\begin{align*}
h_{Z(sT^{n})} (x)= h_{Z(sT^{n}\cap H_\lambda ^-)} (x) \vee h_{Z(sT^{n} \cap H_\lambda ^+)} (x).
\end{align*}
Note that $sT^{n}\cap H_\lambda ^- = \phi_3 \lambda^{1/n} sT^{n}$, $sT^{n}\cap H_\lambda ^+ = \phi_4 (1-\lambda)^{1/n}sT^{n}$. Since $Z$ is $\sln$ \text{contravariant}, we have
\begin{align}\label{a7}
h_{Z(sT^{n})} (x)&= h_{Z(\phi_3 \lambda^{1/n} sT^{n})} (x) \vee h_{Z(\phi_4 (1-\lambda)^{1/n}sT^{n})} (x) \nonumber\\
&= h_{Z(\lambda^{1/n} sT^{n})} (\phi_3 ^{-1} x) \vee h_{Z((1-\lambda)^{1/n}sT^{n})} (\phi_4^{-1}x),
\end{align}
where $x=(x_1, \dots, x_n)^t$, $\phi_3 ^{-1} x = \lambda^{1/n} (\frac{1}{\lambda} x_1, \frac{\lambda -1 }{\lambda}x_1 +x_2, x_3,\dots,x_n)^t$ and $\phi_4 ^{-1} x = (1-\lambda)^{1/n}(x_1 - \frac{\lambda}{1-\lambda} x_2, \frac{1}{1-\lambda}x_2,x_3,\dots,x_n)^t$.
If we choose $x = e_n$, then
\begin{align*}
h_{Z(sT^{n})} (e_n)
=h_{\lambda ^{1/n} Z(\lambda ^{1/n} sT^{n})} (e_n) \vee h_{(1- \lambda)^{1/n}Z((1-\lambda)^{1/n}sT^{n})} (e_n)
\end{align*}
for any $0 < \lambda <1$ and $s>0$.
Taking $\lambda = \frac{\lambda_1}{\lambda_2}$, $0 < \lambda_1 < \lambda_2$ and then taking $s = \lambda_2 ^{1/n}$, with (\ref{210}), we get
\begin{align}\label{a1}
h_{\lambda_2^{1/n}Z(\lambda_2^{1/n}T^{n})} (e_n)
= h_{\lambda_1^{1/n}Z(\lambda_1^{1/n}T^{n})} (e_n) \vee h_{(\lambda_2-\lambda_1)^{1/n}Z((\lambda_2-\lambda_1)^{1/n}T^{n})} (e_n)
\end{align}
for any $0 < \lambda_1 < \lambda_2$.

Let $f(\lambda) = h_{\lambda^{1/n}Z(\lambda^{1/n}T^{n})} (e_n)$, $\lambda >0$. Hence $f$ satisfies the condition in Lemma \ref{cauchy}.
Thus we have $$h_{\lambda^{1/n}Z(\lambda^{1/n}T^{n})} (e_n) = h_{ZT^{n}} (e_n).$$
This shows $h_{Z(sT^{n})} (e_n) = sh_{ZT^{n}} (e_n)$ for any $s>0$. Similarly, $h_{Z(sT^{n})} (-e_n) = sh_{ZT^{n}} (-e_n)$ for any $s>0$.

\end{proof}

\begin{proof}[\textbf{Proof of Theorem \ref{thm1.1}}]
In Section \ref{s2}, we have already shown that $\hat{\Pi}_\infty ^+$ and $\hat{\Pi}_\infty ^-$ are $\sln$ contravariant $L_\infty$ Minkowski valuations. Hence $c_1 \hat{\Pi}_\infty ^+ P +_\infty c_2 \hat{\Pi}_\infty ^- P$ is an $\sln$ contravariant $L_\infty$ Minkowski valuation.

Now we need to show that if $Z : \mathcal{P}_o ^n \to \mathcal {K}_o ^n$ is an $\sln$ contravariant $L_\infty$ Minkowski valuation, then there exists constants $c_1, c_2 \geq 0$ such that
\begin{align}\label{27a}
ZP = c_1 \hat{\Pi}_\infty ^+ P +_\infty c_2 \hat{\Pi}_\infty ^- P
\end{align}
for any $P \in \mathcal{P}_o ^n$.

Let $c_1 = h_{Z(sT^n)} (e_1)$ and $c_2 = h_{Z(sT^n)} (-e_1)$. We first want to show that
$$Z(sT^n) = [-c_2s(e_1+\dots +e_n),c_1s(e_1+\dots +e_n)] = c_1 \hat{\Pi}_\infty ^+ (sT^n) +_\infty c_2 \hat{\Pi}_\infty ^- (sT^n)$$
for any $s >0$. The second equality follows directly from the definitions of $\hat{\Pi}_\infty ^+$ and $\hat{\Pi}_\infty ^-$.

We will show that the orthogonal projection of $Z(sT^{n})$ onto any plane spanned by $\{e_i,e_j\}$, $1 \leq i < j \leq n$ is the segment $[-c_2s(e_i+e_j),c_1s(e_i+e_j)]$. By the $\sln$ contravariance of $Z$, we only need to show that $Z(sT^{n})|\mathbb{R}^2$ has the desired result. Since
\begin{align*}
h_{Z(sT^{n})} (x|\mathbb{R}^2) = h_{(Z(sT^{n}))|\mathbb{R}^2} (x),
\end{align*}
we only need to consider $h_{Z(sT^{n})} (\alpha e_1 + \beta e_2)$. Also since the support function is continuous, we will further assume that $\alpha, \beta$ are not zero.

If $\alpha, \beta$ have the same sign, taking $x=\alpha e_1 +\beta e_2$, $\lambda = \frac{\alpha}{\alpha+\beta}$ in (\ref{a7}), with (\ref{210}), we obtain that
\begin{align*}
h_{Z(sT^{n})} (\alpha e_1 +\beta e_2) = h_{\lambda^{1/n}Z(\lambda^{1/n}sT^{n})} ((\alpha+\beta)e_1) \vee h_{(1-\lambda)^{1/n}Z((1-\lambda)^{1/n}sT^{n})} ((\alpha+\beta)e_2).
\end{align*}
Combined with the Lemma \ref{lem3.2}, we get
\begin{align*}
h_{Z(sT^{n})} (\alpha e_1 +\beta e_2)
&=h_{Z(sT^{n})} ((\alpha+\beta)e_1) \vee h_{Z(sT^{n})}((\alpha+\beta)e_2).
\end{align*}
If $\alpha,\beta > 0$, we get
$$h_{Z(sT^{n})} (\alpha e_1 +\beta e_2)= c_1 s(\alpha+\beta)= h_{[-c_2s(e_1+e_2),c_1s(e_1+e_2)]}(\alpha e_1 +\beta e_2).$$
If $\alpha,\beta <0$, we get
$$h_{Z(sT^{n})} (\alpha e_1 +\beta e_2)= -c_2 s(\alpha+\beta)= h_{[-c_2s(e_1+e_2),c_1s(e_1+e_2)]}(\alpha e_1 +\beta e_2).$$

If $\alpha > -\beta >0$ or $-\alpha > \beta >0$, taking $x= (\alpha +\beta) e_1$, $\lambda = \frac{\alpha+\beta}{\alpha}$, $s=\lambda ^{-1/n}s$ in (\ref{a7}), with (\ref{210}), we obtain
\begin{align*}
h_{\lambda^{-1/n}Z(\lambda^{-1/n}sT^{n})} ((\alpha +\beta) e_1)
=h_{Z(sT^{n})} (\alpha e_1+\beta e_2) \vee h_{(\frac{1}{\lambda}-1)^{1/n}Z((\frac{1}{\lambda}-1)^{1/n}sT^{n})} ((\alpha+\beta)e_1).
\end{align*}
Combined with Lemma \ref{lem3.2}, we get
\begin{align*}
h_{Z(sT^{n})} (\alpha e_1+\beta e_2) &\leq h_{\lambda^{-1/n}Z(\lambda^{-1/n}sT^{n})} ((\alpha +\beta) e_1) \\
&=h_{Z(sT^{n})}((\alpha +\beta) e_1) \\
&=h_{[-c_2s(e_1+e_2),c_1s(e_1+e_2)]}(\alpha e_1 +\beta e_2).
\end{align*}

If $\beta > -\alpha >0$ or $-\beta > \alpha >0$, taking $x= (\alpha +\beta) e_2$, $\lambda = -\frac{\alpha}{\beta}$, $s=(1-\lambda)^{-1/n} s$ in (\ref{a7}), we obtain
\begin{align*}
&h_{(1-\lambda) ^{-1/n}Z((1-\lambda) ^{-1/n}sT^{n})} ((\alpha +\beta) e_2) \\
&=h_{(1-\lambda) ^{-1/n} \lambda^{1/n}Z((1-\lambda) ^{-1/n} \lambda^{1/n}sT^{n})} ((\alpha +\beta) e_2) \vee h_{Z(sT^{n})} (\alpha e_1+\beta e_2).
\end{align*}
Similarly, we get
$$h_{Z(sT^{n})} (\alpha e_1+\beta e_2) \leq h_{[-c_2s(e_1+e_2),c_1s(e_1+e_2)]}(\alpha e_1 +\beta e_2).$$

Combined, we get
$$h_{(Z(sT^{n}))|\mathbb{R}^2} (x) = h_{Z(sT^{n})} (x) \leq h_{[-c_2s(e_1+e_2),c_1s(e_1+e_2)]}(x)$$
for an arbitrary $x \in \mathbb{R}^2$ by the continuity of the support function.
Hence we get that $(Z(sT^{n}))|\mathbb{R}^2 \subset [-c_2s(e_1+e_2),c_1s(e_1+e_2)]$. Since $(Z(sT^{n}))|\mathbb{R}^2$ is convex, there exist real $a,b$ with $-c_2 \leq a \leq b \leq c_1$ such that $(Z(sT^{n}))|\mathbb{R}^2 = [as(e_1+e_2),bs(e_1+e_2)]$. However, $h_{(Z(sT^{n}))|\mathbb{R}^2}(e_1) = h_{Z(sT^{n})} (e_1)=c_1s$ and $h_{(Z(sT^{n}))|\mathbb{R}^2}(-e_1) = h_{Z(sT^{n})} (-e_1)= c_2s$ show that $a=-c_2$, $b=c_1$. Hence, $(Z(sT^{n}))|\mathbb{R}^2 = [-c_2s(e_1+e_2),c_1s(e_1+e_2)]$.

Since the orthogonal projection of $Z(sT^{n})$ onto any plane spanned by $\{e_i,e_j\}$, $1 \leq i < j \leq n$ is the segment $[-c_2s(e_i+e_j),c_1s(e_i+e_j)]$, we obtain that $Z(sT^{n}) = [-c_2s(e_1+\dots +e_n),c_1s(e_1+\dots +e_n)]$.

By the $\sln$ contravariance of $Z$, (\ref{27a}) holds true for every simplex in $\mathcal {T}_o^n$.
Assume that (\ref{27a}) holds on $\mathcal{P}_{i-1}$, $i \geq 2$. Let $P=P_1 \cup P_2 \in \mathcal{P}_{i}$, where $P_1,P_2 \in \mathcal{P}_{i-1}$ have disjoint relative interiors. We can assume $P \neq P_1$ and $P \neq P_2$. Set $d= \dim P_1 = \dim P_2$, $\dim (P_1 \cap P_2) = d-1$. By (\ref{303}), we have $P_1 \cap P_2 \in \mathcal{P}_{i-1}$. Hence,
$$h_{Z(P_1 \cap P_2)} = 0 \leq h_{ZP_i}$$
for $i=1,2$. Therefore $Z(P_1 \cup P_2)$ is uniquely determined by $h_{Z(P_1 \cup P_2)} = h_{ZP_1} \vee h_{ZP_2}$.
Thus (\ref{27a}) holds on $\mathcal{P}_{i}$. For any $P \in \mathcal{P}_o^n$, there exists $i$ such that $P \in \mathcal{P}_{i}$. Thus (\ref{27a}) holds on $\mathcal{P}_o^n$.
\end{proof}


\section{$\sln$ covariant $L_\infty$ Minkowski valuations}
We will use the following lemma by Ludwig \cite{Lud05} and Haberl \cite{Hab12b} for maps to $\mathcal{K}_o ^n$ and Parapatits \cite{Par14b} for maps to $C_p(\mathbb{R}^n)$, the set of $p$-homogenous continuous functions on $\mathbb{R}^n$. (Maps to $C_p(\mathbb{R}^n)$ are considered in Section \ref{Slp}.)

\begin{lem}\label{lem4.1}
Let $n \geq 2$. If a map $\Phi : \mathcal{P}_o ^n \to C_p(\mathbb{R}^n)$ is $\sln$ covariant, then
\begin{align*}
  \Phi(P) (x) = \Phi(P) (x|P), ~x \in \mathbb{R}^n
\end{align*}
for any $P \in \mathcal{P}_o ^n$. In particular, if a map $Z : \mathcal{P}_o ^n \to \mathcal{K}_o ^n$ is $\sln$ covariant (hence $P \mapsto h_{ZP}$ is also $\sln$ covariant), then $ZP \subset \lin P$, and
\begin{align*}
  h_{ZP} (x) = h_{ZP} (x|P), ~x \in \mathbb{R}^n
\end{align*}
for any $P \in \mathcal{P}_o ^n$.
\end{lem}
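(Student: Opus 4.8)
The plan is to use $\sln$ covariance to reduce to the case where $\lin P$ is a coordinate subspace, and then exploit the large subgroup of $\sln$ fixing $P$. Put $d=\dim P$; if $d=n$ there is nothing to prove, so assume $1\le d\le n-1$ (the degenerate case $P=\{o\}$ is treated at the end). Since $\gln$ acts transitively on $d$-dimensional subspaces and one can post-compose by a rescaling of the last coordinate to fix the determinant, there is $\psi\in\sln$ with $\psi\,\mathbb{R}^d=\lin P$; replacing $P$ by $\psi^{-1}P$, I may assume $\lin P=\mathbb{R}^d=\lin\{e_1,\dots,e_d\}$. This reduction is legitimate because $\psi^t$ carries $(\psi\,\mathbb{R}^d)^\perp$ onto $(\mathbb{R}^d)^\perp$, so that orthogonal projection onto $\lin P$ and the substitution $x\mapsto\psi^tx$ (by which covariance transports the argument) are compatible. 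Write $x=(x',x'')$ with $x'\in\mathbb{R}^d$ and $x''\in\lin\{e_{d+1},\dots,e_n\}$, so that $x|P=(x',0)$.

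Next I would exploit the stabiliser of $P$. For every $A\in\mathbb{R}^{d\times(n-d)}$ and every $B\in\mathbb{R}^{(n-d)\times(n-d)}$ with $\det B=1$, the block matrix $\phi=\left(\begin{smallmatrix}I&A\\0&B\end{smallmatrix}\right)$ lies in $\sln$ and fixes $\mathbb{R}^d=\lin P$ pointwise, hence $\phi P=P$. Since $\sln$ covariance of $\Phi$ reads $\Phi(\phi P)(x)=\Phi(P)(\phi^tx)$ (this is precisely $h_{\phi ZP}=h_{ZP}\circ\phi^t$ in the case $\Phi=h_{ZP}^p$), we obtain
\[
\Phi(P)(x',x'')=\Phi(P)\bigl(x',\,A^tx'+B^tx''\bigr)\qquad\text{for all admissible }A,B.
\]
Now fix $x'\neq0$. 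As $A$ ranges over $\mathbb{R}^{d\times(n-d)}$, the vector $A^tx'$ ranges over all of $\mathbb{R}^{n-d}$, so with $B=I$ we get $\Phi(P)(x',x'')=\Phi(P)(x',0)=\Phi(P)(x|P)$ for every $x''$.

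For the remaining directions $x=(0,x'')$, where $x|P=0$, I would invoke continuity: since $\Phi(P)\in C_p(\mathbb{R}^n)$, we have $\Phi(P)(0,x'')=\lim_{t\to0^+}\Phi(P)(te_1,x'')$, and for $t>0$ the previous step applies to $(te_1,x'')$ and gives $\Phi(P)(te_1,x'')=\Phi(P)(te_1,0)$; hence $\Phi(P)(0,x'')=\lim_{t\to0^+}\Phi(P)(te_1,0)=\Phi(P)(0,0)=0$, the last step by $p$-homogeneity. Together with the case $x'\neq0$ this yields $\Phi(P)(x)=\Phi(P)(x|P)$ for all $x$, proving the first assertion. (If $P=\{o\}$, then $\phi P=P$ for every $\phi\in\sln$, so $\Phi(P)$ is $\sln$-invariant, hence constant on $\mathbb{R}^n\setminus\{o\}$ and thus $\equiv0$ by homogeneity, which is again $\Phi(P)(x)=\Phi(P)(x|P)$.) For the body-valued statement, apply the above to $\Phi(P):=h_{ZP}\in C_1(\mathbb{R}^n)$; it gives $h_{ZP}(x)=h_{ZP}(x|P)$, and if some $y\in ZP$ had a nonzero component $y^\perp$ orthogonal to $\lin P$, then $h_{ZP}(y^\perp)\ge y^\perp\cdot y=|y^\perp|^2>0$ whereas $h_{ZP}(y^\perp|P)=h_{ZP}(o)=0$, a contradiction; hence $ZP\subset\lin P$.

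I expect the main obstacle to be exactly the directions $x$ orthogonal to $\lin P$ — above all when $\dim P=n-1$, where the orthogonal complement is one-dimensional, $\det B=1$ forces $B$ to act trivially there, and the stabiliser therefore gives no information about $\Phi(P)$ in that direction at all. The resolution is that the stabiliser is not needed there: continuity of $\Phi(P)$ recovers that value as a limit of values at nearby directions with nonzero component in $\lin P$, which the stabiliser does control. A secondary subtlety is the opening reduction to $\lin P=\mathbb{R}^d$: orthogonal projection does not commute with a general $\psi\in\sln$, so one must check that $\psi^t$ pairs $(\psi\,\mathbb{R}^d)^\perp$ with $(\mathbb{R}^d)^\perp$ before concluding that it suffices to prove the statement for $\psi^{-1}P$.
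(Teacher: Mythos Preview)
Your proof is correct. The paper does not supply its own proof of this lemma but attributes it to Ludwig, Haberl, and Parapatits; your block upper-triangular stabiliser argument is exactly the standard device (and is the same technique the paper itself deploys in Lemma~\ref{lem3.1} for the contravariant analogue), with the continuity step at $x'=0$ handling the one direction the stabiliser cannot reach.
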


The following Lemma determines the constants in Theorem \ref{thm1.3} and establishes a homogeneity property of $\sln$ covariant $L_\infty$ Minkowski valuations.
\begin{lem}\label{lem4.2}
Let $n \geq 3$. If $Z : \mathcal{P}_o ^n \to \mathcal {K}_o ^n$ is an $\sln$ covariant $L_\infty$ Minkowski valuation, then
\begin{align}\label{d1}
h_{Z(sT^d)} ( \pm e_1) = sh_{ZT^d} ( \pm e_1)
\end{align}
for $1 \leq d \leq n$ and $s>0$, while
\begin{align}\label{d2}
h_{ZT^1} ( \pm e_1) \leq \dots \leq h_{ZT^n} ( \pm e_1).
\end{align}
\end{lem}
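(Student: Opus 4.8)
The plan is to mirror the argument of Lemma \ref{lem3.2}, using the $\sln$ covariance in place of contravariance and replacing the role of the top-dimensional simplex with $T^d$ for each fixed $d$. First I would record that by Lemma \ref{lem4.1}, for a $d$-dimensional polytope $P$ the support function $h_{ZP}$ only depends on $x|P$, so $h_{ZP}(\pm e_1)$ is the natural ``one-dimensional'' quantity to track; in particular $h_{Z(sT^d)}(\pm e_1)$ makes sense as the object whose homogeneity we want. The key geometric input is the dissection identities from Section \ref{s2}: for $2 \le d \le n-1$ we have $sT^d \cap H_\lambda^- = \phi_1 sT^d$, $sT^d \cap H_\lambda^+ = \phi_2 sT^d$, and $sT^d \cap H_\lambda = \phi_1 s\hat T^{d-1}$, so applying the valuation property to the decomposition of $sT^d$ along $H_\lambda$ and using $\sln$ covariance gives an identity of the form $h_{Z(sT^d)}(x) \vee h_{Z(\phi_1 s\hat T^{d-1})}(x) = h_{Z(\phi_1 sT^d)}(x) \vee h_{Z(\phi_2 sT^d)}(x)$. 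Evaluating at $x = e_1$ and computing $\phi_1^{-1} e_1$, $\phi_2^{-1} e_1$ and the action on $\hat T^{d-1}$ should turn this into a relation among $h_{Z(sT^d)}(e_1)$, $h_{Z(\mu sT^d)}(e_1)$ at scaled parameters, plus a lower-dimensional term involving $\hat T^{d-1}$.

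Next I would handle the case $d = n$ separately using $\phi_3, \phi_4$ exactly as in Lemma \ref{lem3.2}: here the dissection piece $sT^n \cap H_\lambda$ is $(n-1)$-dimensional and, by Lemma \ref{lem4.1}, $h_{Z(sT^n\cap H_\lambda)}$ depends only on $x|(sT^n\cap H_\lambda)$, which is orthogonal to $e_1$ after an appropriate normalization — or more directly, one arranges the evaluation point so that the $\hat T$-term drops out or is subsumed by the maximum. In each case the goal is to produce a functional equation $f(\lambda_2) \vee a = f(\lambda_1) \vee f(\lambda_2 - \lambda_1)$ in the variable $\lambda$, where $f(\lambda) := h_{\mu(\lambda)Z(\mu(\lambda)T^d)}(e_1)$ for the correct normalization factor $\mu(\lambda)$ (a power of $\lambda$), and $a$ is the contribution of the lower-dimensional piece; then Lemma \ref{cauchy} forces $f$ constant, which unwinds to $h_{Z(sT^d)}(e_1) = s\, h_{ZT^d}(e_1)$, and symmetrically for $-e_1$. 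The monotonicity \eqref{d2} then follows because Lemma \ref{cauchy} also yields $f(1) \ge a$: the constant value $h_{ZT^d}(e_1)$ dominates the lower-dimensional contribution, which after unwinding the normalizations is exactly $h_{ZT^{d-1}}(e_1)$ (using the previously established homogeneity in dimension $d-1$ to identify $h_{Z(\text{scaled }\hat T^{d-1})}(e_1)$ with a scalar multiple of $h_{ZT^{d-1}}(e_1)$, and $\hat T^{d-1}$ being $\sln$-equivalent to $T^{d-1}$). So I would run the argument by induction on $d$, establishing \eqref{d1} and the inequality $h_{ZT^{d-1}}(\pm e_1) \le h_{ZT^d}(\pm e_1)$ simultaneously at each step, with the base case $d=1$ (or $d=2$) being either trivial or a direct application of the $\phi_1,\phi_2$ dissection with no lower-dimensional term.

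The main obstacle I anticipate is bookkeeping the normalization factors $\mu(\lambda)$ correctly so that the rescaled map $f$ genuinely satisfies the hypothesis of Lemma \ref{cauchy} — in particular, matching the volume-scaling exponents coming from $\phi_3, \phi_4$ in the top-dimensional case against the simpler scalings from $\phi_1, \phi_2$ in intermediate dimensions, and verifying that the constant $a$ extracted from the lower-dimensional facet is nonnegative and equals the right multiple of $h_{ZT^{d-1}}(e_1)$. A secondary subtlety is making sure, when evaluating \eqref{a7}-type identities at $e_1$ rather than $e_n$, that one still lands on a clean one-variable equation; this may require choosing the separating hyperplane $H_\lambda$ and the associated transforms so that $e_1$ behaves well, or reducing to the $e_n$ computation already done by an auxiliary coordinate permutation in $\sln$. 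Once the functional equation is in the correct normalized form, the rest is a routine invocation of Lemma \ref{cauchy} and Lemma \ref{lem4.1}.
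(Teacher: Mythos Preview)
Your plan is correct for $d=n$: the dissection via $\phi_3,\phi_4$ produces genuinely \emph{scaled} copies $\lambda^{1/n}sT^n$ and $(1-\lambda)^{1/n}sT^n$, so evaluating at $e_n$ (and then permuting to $e_1$) yields a functional equation of the form required by Lemma~\ref{cauchy}, and one reads off both the homogeneity and the inequality $h_{ZT^n}(\pm e_1)\ge h_{ZT^{n-1}}(\pm e_1)$ from $f(1)\ge a$. This matches the paper.

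The gap is in the range $2\le d\le n-1$. First, the homogeneity \eqref{d1} there is a one-line consequence of $\sln$ covariance alone: since $d<n$, the dilation by $s$ on $\lin T^d$ can be completed to a determinant-one map, so $Z(sT^d)=s\,ZT^d$ with no functional equation needed. More importantly, the maps $\phi_1,\phi_2$ do \emph{not} send $sT^d$ to scaled copies of $T^d$ but only to $\sln$-images, and in the covariant setting this means that at $x=e_1$ one obtains (since $\phi_1^t e_1=\lambda e_1$ but $\phi_2^t e_1=e_1+\lambda e_2$)
\[
a_d\ \vee\ \lambda\,a_{d-1}\;=\;\lambda\,a_d\ \vee\ h_{ZT^d}(e_1+\lambda e_2),
\]
which is \emph{not} of the Cauchy form $f(\lambda_2)\vee a=f(\lambda_1)\vee f(\lambda_2-\lambda_1)$ because of the two-variable term $h_{ZT^d}(e_1+\lambda e_2)$. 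The paper deduces $a_{d-1}\le a_d$ from this relation by an idea you do not mention: assuming $a_d<a_{d-1}$, the displayed identity forces $h_{ZT^d}(e_1+\lambda e_2)=\lambda\,a_{d-1}$ on an interval of $\lambda$, and then the \emph{subadditivity} of the support function $h_{ZT^d}$ yields a contradiction. So for $d\le n-1$ the monotonicity step is a convexity argument, not an application of Lemma~\ref{cauchy}; the ``secondary subtlety'' you flag is in fact the main obstacle, and a genuinely different ingredient is needed to overcome it.
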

\begin{proof}
Let $a_d : = h_{ZT^d} (e_1)$, $b_d : = h_{ZT^d} (-e_1)$ for $1 \leq d \leq n$.

If $d \leq n-1$, it is easy to see that $Z(sT^{d}) = s ZT^d$ by the $\sln$ covariance of $Z$. Hence (\ref{d1}) holds for $d \leq n-1$.

For $0 < \lambda < 1$, define $H_\lambda$, $\phi_1,~\phi_2,~\phi_3$, and $\phi_4$ as in Section \ref{s2}. Since $Z$ is an $L_\infty$ Minkowski valuation,
\begin{align}\label{21}
h_{Z(sT^{d})} (x) \vee h_{Z(sT^d \cap H_\lambda)} (x) = h_{Z(sT^d \cap H_\lambda ^-)} (x) \vee h_{Z(sT^d \cap H_\lambda ^+)} (x),~~x \in \mathbb{R}^n
\end{align}
for any $s >0$.

For $2 \leq d \leq n-1$, since $Z$ is $\sln$ covariant, we obtain
\begin{align}\label{17}
h_{ZT^d} (x) \vee h_{Z\hat{T}^{d-1}} (\phi _1 ^t x)
= h_{ZT^d} (\phi _1 ^t x) \vee h_{Z T^d} (\phi _2 ^t x),
\end{align}
where $x=(x_1, \dots, x_n)^t \in \mathbb{R}^n$, $\phi_1 ^t x = (\lambda x_1 + (1-\lambda)x_2, x_2, x_3,\dots,x_{n-1}, \frac{1}{\lambda} x_n)^t$ and $\phi_2 ^t x = (x_1, \lambda x_1 + (1-\lambda)x_2,x_3,\dots,x_{n-1},\frac{1}{\lambda}x_n)^t$.
Taking $x=e_1$, $s=1$ in (\ref{17}), we get
\begin{align*}
h_{ZT^d} (e_1) \vee h_{Z\hat{T}^{d-1}} (\lambda e_1)
= h_{ZT^d} (\lambda e_1) \vee h_{Z T^d} (e_1 + \lambda e_2).
\end{align*}
Also since support functions are homogeneous and continuous, and $h_{Z\hat{T}^{d-1}} (e_1) = a_{d-1}$ by the $\sln$ covariance of $Z$,
\begin{align}\label{18}
a_d \vee (\lambda a_{d-1})
= (\lambda a_d) \vee h_{Z T^d} (e_1 + \lambda e_2)
\end{align}
holds for $0 \leq \lambda \leq 1$.

We need to show that $a_{d-1} \leq a_d$. Indeed, if we assume $a_d < a_{d-1}$, then there exists $0 \leq \lambda_0 <1$ such that $a_{d-1} \lambda_0 = a_d$. Taking $\lambda_0 \leq \lambda \leq 1 $ in (\ref{18}), we get $h_{Z T^d} (e_1 + \lambda e_2)  = a_{d-1} \lambda$. However, choosing $\lambda_0 \leq \lambda_1 < \lambda_2 \leq 1$, by the sublinearity of the support function, we have
\begin{align*}
  a_{d-1} \lambda_2 =  h_{Z T^d} (e_1 + \lambda_2 e_2) &\leq h_{Z T^d} (e_1 + \lambda_1 e_2) + h_{Z T^d} ((\lambda_2 -\lambda_1) e_2) \\
   &= a_{d-1} \lambda_1 + a_d(\lambda_2 -\lambda_1),
\end{align*}
which is a contradiction to the assumption.

Similarly, taking $x=-e_1$ in (\ref{17}), we get $b_{d-1} \leq b_d$.

\vskip 5pt
If $d=n$, define $\phi_3,\phi_4 \in \sln$ as in Section \ref{s2}. Since $Z$ is $\sln$ covariant, (\ref{21}) shows that
\begin{align}\label{30}
h_{Z(sT^{n})} (x) \vee h_{Z(\lambda^{1/n}s\hat{T}^{n-1})} (\phi_3 ^t x)
=h_{Z(\lambda^{1/n} sT^{n})} (\phi_3 ^t x) \vee h_{Z((1-\lambda)^{1/n}sT^{n})} (\phi_4 ^t x),
\end{align}
where $x=(x_1, \dots, x_n)^t$, $\phi_3 ^t x = \lambda^{-1/n} (\lambda x_1 + (1-\lambda)x_2, x_2, x_3,\dots, x_n)^t$ and $\phi_4 ^t x = (1-\lambda)^{-1/n}(x_1, \lambda x_1 + (1-\lambda)x_2,x_3,\dots,x_n)^t$.
So if we choose $x =e_n$ in (\ref{30}), we have
\begin{align}\label{31}
h_{Z(sT^{n})} (e_n) \vee h_{\lambda^{-1/n}Z(\lambda^{1/n}s\hat{T}^{n-1})} (e_n)
= h_{\lambda^{-1/n}Z(\lambda^{1/n} sT^{n})} (e_n) \vee h_{(1-\lambda)^{-1/n}Z((1-\lambda)^{1/n}sT^{n})} (e_n)
\end{align}
for $0 < \lambda <1$, $s>0$. Since (\ref{d1}) holds for $d \leq n-1$ and $Z$ is $\sln$ covariant, we have $h_{\lambda^{-1/n}Z(\lambda^{1/n}s\hat{T}^{n-1})} (e_n) = a_{n-1}$.
Combining it with (\ref{210}), taking $\lambda = \frac{\lambda_1}{\lambda_2}$, $0 < \lambda_1 < \lambda_2$ and $s = \lambda_2 ^{1/n}$ in (\ref{31}), we get
\begin{align}\label{32}
h_{\lambda_2^{-1/n}Z(\lambda_2^{1/n}T^{n})} (e_n) \vee a_{n-1}
= h_{\lambda_1^{-1/n}Z(\lambda_1^{1/n}T^{n})} (e_n) \vee h_{(\lambda_2-\lambda_1)^{-1/n}Z((\lambda_2-\lambda_1)^{1/n}T^{n})} (e_n)
\end{align}
for $0 < \lambda_1 < \lambda_2$.

Let $f(\lambda) = h_{\lambda^{-1/n}Z(\lambda^{1/n}T^{n})} (e_n)$, $\lambda >0$. Hence $f$ satisfies the condition in Lemma \ref{cauchy}.
Thus we have $h_{\lambda^{-1/n}Z(\lambda^{1/n}T^{n})} (e_n) = h_{ZT^{n}} (e_n) \geq a_{n-1}$. Combined with the $\sln$ covariance of $Z$, we have
$$h_{Z(sT^{n})} (e_1) = sh_{ZT^{n}} (e_1),  h_{ZT^{n}} (e_1) \geq a_{n-1} = h_{ZT^{n-1}} (e_1).$$

Similarly, taking $x=-e_1$ in (\ref{30}), we get
$$h_{Z(sT^{n})} (-e_1) = sh_{ZT^{n}} (-e_1),  h_{ZT^{n}} (-e_1) \geq  h_{ZT^{n-1}} (-e_1).$$
\end{proof}

\begin{proof}[\textbf{Proof of Theorem \ref{thm1.3}}]
It is easy to see that the identity map and the reflection map are $\sln$ covariant $L_\infty$ Minkowski valuations. Hence
\begin{align}\label{27}
ZP = a_d P +_\infty (- b_d P) = [a_d P , - b_d P]
\end{align}
is also an $\sln$ covariant $L_\infty$ Minkowski valuation.

Now we will show that if $Z$ is an $\sln$ covariant $L_\infty$ Minkowski valuation, then (\ref{27}) holds.
We will first show that (\ref{27}) holds for simplices $sT^d$, $d \leq n$, $s > 0$. We will prove the result by induction on the dimension $d$. $Z \{ o \} = \{ o \}$ has been shown in (\ref{37}). Set $a_d : = h_{ZT^d} (e_1)$ and $b_d : = h_{ZT^d} (-e_1)$. Lemma \ref{lem4.2} shows that
$$0 \leq a_1 \leq \dots \leq a_n,~~0 \leq b_1 \leq \dots \leq b_n.$$

If $d=1$, by the $\sln$ covariance of $Z$, we have $Z[0, se_1] = sZ[0, e_1]$ for any $s > 0$. By Lemma \ref{lem4.1}, we get that $Z[0, e_1] = [-b_1,a_1]$. The case $d=1$ is done.

Assume that (\ref{27}) holds true for dimension $d-1$, $2 \leq d \leq n$. We want to show that (\ref{27}) also holds true for dimension $d$.

We will show by induction on the number $m$ of coordinates of $x$ not equal to zero that
\begin{align}\label{26}
  h_{Z (sT^d)} (x) = h_{[a_d sT^d , -b_d sT^d]} (x).
\end{align}

For $m=1$, (\ref{26}) holds true by (\ref{d1}), the $\sln$ covariance of $Z$ and the homogeneity of the support function. Assume that $(\ref{26})$ holds true for $m-1$. We need to show that $(\ref{26})$ also holds true for $m$. By the $\sln$ covariance of $Z$, we can assume w.l.o.g. that $x=x_1 e_1+\dots +x_m e_m$, $x_1,\dots,x_m \neq 0$.

Note that (\ref{17}) is a special form of (\ref{30}) for dimension $d \leq n-1$ since $Z(sT^d) = sZT^d$ for any $s>0$. We will use (\ref{30}) to get the value of $h_{ZT^d}$ not just for $d=n$ but also for $d \leq n-1$.

For $x_1 > x_2 > 0$ or $0 > x_2 >x_1$, taking $x=x_1 e_1 + x_3 e_3 + \dots +x_m e_m$, $\lambda = \frac{x_2}{x_1}$, $s=(1-\lambda) ^{-1/d}s$ in (\ref{30}), by (\ref{210}), we get
\begin{align}\label{38}
&h_{(1-\lambda) ^{1/d}Z((1-\lambda) ^{-1/d}sT^d)} (x_1e_1 + x_3 e_3 + \dots +x_m e_m)  \nonumber \\
 & \qquad \qquad \vee h_{(1-\lambda) ^{1/d}\lambda ^{-1/d}Z((1-\lambda) ^{-1/d}\lambda ^{1/d}s\hat{T}^{d-1}} (x_2 e_1 + x_3 e_3 + \dots +x_m e_m) \nonumber \\
= &h_{(1-\lambda) ^{1/d}\lambda ^{-1/d}Z((1-\lambda) ^{-1/d}\lambda ^{1/d}sT^d)} (x_2 e_1 + x_3 e_3 + \dots +x_m e_m)   \nonumber \\
 & \qquad \qquad \vee h_{Z(s T^d)} (x_1 e_1 + \dots +x_m e_m).
\end{align}

Since $a_{d-1} \leq a_d$, $b_{d-1} \leq b_d$, $|x_2| < |x_1|$, combining the induction assumption with the $\sln$ covariance of $Z$, we have
\begin{align*}
&h_{(1-\lambda) ^{1/d}Z((1-\lambda) ^{-1/d}sT^d)} (x_1e_1 + x_3 e_3 + \dots +x_m e_m)  \nonumber\\
& \qquad \qquad = \max \{ a_ds x_i, -b_ds x_i : 1 \leq i \leq m~\text{and}~i \neq 2\}  \nonumber\\
& \qquad \qquad \geq \max \{ a_{d-1} s x_i, -b_{d-1} s x_i : 2 \leq i \leq m\}  \nonumber\\
& \qquad \qquad = h_{(1-\lambda) ^{1/d}\lambda ^{-1/d}Z((1-\lambda) ^{-1/d}\lambda ^{1/d}s\hat{T}^{d-1}} (x_2 e_1 + x_3 e_3 + \dots +x_m e_m).
\end{align*}
It follows from (\ref{38}) that
\begin{align}\label{49}
&h_{Z(sT^d)} (x_1 e_1 + \dots +x_m e_m) \nonumber \\
& \qquad \qquad \leq h_{(1-\lambda) ^{1/d}Z((1-\lambda) ^{-1/d}sT^d)} (x_1e_1 + x_3 e_3 + \dots +x_m e_m) \nonumber \\
& \qquad \qquad = \max \{ a_ds x_i, -b_ds x_i : 1 \leq i \leq m\}.
\end{align}

For $x_2 > x_1 > 0$ or $0 > x_1 >x_2$,
taking $x=x_2 e_2 + x_3 e_3 + \dots +x_m e_m$, $1-\lambda = \frac{x_1}{x_2}$, $s=\lambda ^{-1/d}s$ in (\ref{30}), by (\ref{210}), we get
\begin{align}\label{40}
&h_{\lambda ^{1/d}Z(\lambda ^{-1/d}sT^d)} (x_2e_2 + x_3 e_3 + \dots +x_m e_m) \nonumber \\
& \qquad \qquad \vee h_{Z(s\hat{T}^{d-1})} (x_1 e_1 + \dots +x_m e_m) \nonumber \\
= &h_{Z(sT^d)} (x_1 e_1 + \dots +x_m e_m) \nonumber \\
 & \qquad \qquad \vee h_{(1-\lambda) ^{-1/d}\lambda ^{1/d}Z((1-\lambda) ^{1/d}\lambda ^{-1/d}s T^d)} (x_1 e_2 +x_3 e_3 + \dots +x_m e_m).
\end{align}
Similarly to the case $|x_2| < |x_1|$, since
$$h_{\lambda ^{1/d}Z(\lambda ^{-1/d}sT^d)} (x_2e_2 + x_3 e_3 + \dots +x_m e_m) \geq h_{Z(s\hat{T}^{d-1})} (x_1 e_1 + \dots +x_m e_m),$$
we get
\begin{align}\label{50}
&h_{Z(sT^d)} (x_1 e_1 + \dots +x_m e_m) \nonumber \\
& \qquad \qquad \leq h_{\lambda ^{1/d}Z(\lambda ^{-1/d}sT^d)} (x_2e_1 + x_3 e_3 + \dots +x_m e_m) \nonumber \\
& \qquad \qquad = \max \{ a_ds x_i, -b_ds x_i : 1 \leq i \leq m\}.
\end{align}

For $x_1 >0 > x_2$ or $x_2 > 0 > x_1$, taking $0 < \lambda = \frac{x_2}{x_2 - x_1} <1$ and $x= x_1 e_1 + \dots +x_m e_m$ in (\ref{30}), we get
\begin{align}\label{23}
&h_{Z(sT^d)} (x_1 e_1 + \dots +x_m e_m) \nonumber \\
& \qquad \qquad \vee h_{\lambda ^{-1/d}Z(\lambda ^{1/d}s\hat{T}^{d-1})} (x_2 e_2 + x_3 e_3 + \dots +x_m e_m) \nonumber \\
= & h_{\lambda ^{-1/d}Z(\lambda ^{1/d}sT^d)} (x_2 e_2 + x_3 e_3 + \dots +x_m e_m) \nonumber \\
& \qquad \qquad \vee h_{(1-\lambda) ^{-1/d}Z((1-\lambda) ^{1/d}sT^d)} (x_1 e_1 + x_3 e_3 + \dots +x_m e_m).
\end{align}
Combined with the induction assumption and the $\sln$ covariance of $Z$, we have
\begin{align}\label{24}
h_{Z (sT^d)} (x_1 e_1 + \dots +x_m e_m) \leq \max \{ a_d sx_i, -b_d sx_i : 1 \leq i \leq m \}.
\end{align}
Combining (\ref{49}), (\ref{50}) and (\ref{24}) with the continuity of the support function, we get
\begin{align*}
  h_{Z(sT^d)|\mathbb{R}^m} (x_1e_1 + \dots +x_m e_m) &= h_{Z (sT^d)} (x_1e_1 + \dots +x_m e_m) \\
  &\leq h_{[a_d sT^d , -b_d sT^d]} (x_1e_1 + \dots +x_m e_m) \\
  &= h_{[a_d sT^m ,-b_d sT^m]} (x_1e_1 + \dots +x_m e_m)
\end{align*}
for any $x_1,\dots,x_m \in \mathbb{R}$. Thus, $Z(sT^d)|\mathbb{R}^m \subset [a_d sT^m , -b_d sT^m]$.
For any $y \in [a_d sT^m , -b_d sT^m]$ with $y \neq a_d se_1$, we have $y \cdot e_1 < a_d s$, and also $h_{Z(sT^d)|\mathbb{R}^m}(e_1) =h_{Z (sT^d)}(e_1) = a_d s$.
Thus, we obtain $a_d se_1 \in Z(sT^d)|\mathbb{R}^m$. Similarly, $a_d se_i, -b_d se_i \in Z(sT^d)|\mathbb{R}^m$, $1 \leq i \leq m$. Hence, we have
\begin{align*}
  [a_d sT^m , -b_d sT^m] &= s[a_d e_1,\dots,a_d e_m,-b_d e_1,\dots,-b_d e_m] \\
  &\subset Z(sT^d)|\mathbb{R}^m \subset [a_d sT^m , -b_d sT^m].
\end{align*}
That means
\begin{align*}
h_{Z (sT^d)} (x_1e_1 + \dots +x_m e_m) = h_{[a_d sT^d , -b_d sT^d]} (x_1e_1 + \dots +x_m e_m)
\end{align*}
for any $x_1,\dots,x_m \in \mathbb{R}$. The induction is complete.

By the $\sln$ covariance of $Z$, (\ref{27}) holds true for any simplex in $\mathcal {T}_o^n$.
Assume that (\ref{27}) holds on $\mathcal{P}_{i-1}$, $i \geq 2$. Let $P=P_1 \cup P_2 \in \mathcal{P}_{i}$, where $P_1,P_2 \in \mathcal{P}_{i-1}$ have disjoint relative interiors. We can assume $P \neq P_1$ and $P \neq P_2$. Set $d= \dim P_1 = \dim P_2$, $\dim (P_1 \cap P_2) = d-1$. By (\ref{303}), we have $P_1 \cap P_2 \in \mathcal{P}_{i-1}$. Hence,
$$h_{Z(P_1 \cap P_2)} = h_{[a_{d-1} (P_1 \cap P_2) , -b_{d-1} (P_1 \cap P_2)]} \leq h_{[a_{d-1} P_i, -b_{d-1} P_i]} \leq h_{[a_d P_i, -b_d P_i]} = h_{ZP_i}$$
for $i=1,2$. Therefore
$$h_{Z(P_1 \cup P_2)} = h_{ZP_1} \vee h_{ZP_2} = h_{[a_{d} (P_1 \cup P_2) , -b_{d} (P_1 \cup P_2)]}.$$
Thus (\ref{27}) holds on $\mathcal{P}_{i}$. For any $P \in \mathcal{P}_o^n$, there exists $i$ such that $P \in \mathcal{P}_{i}$. Thus (\ref{27}) holds on $\mathcal{P}_o^n$.
\end{proof}


\section{$\sln$ covariant $L_p$ Minkowski valuations and function-valued valuations}\label{Slp}
First, let us consider function-valued valuations as Parapatits did in \cite{Par14a,Par14b}.
Let $1 \leq p < \infty$ throughout this section if there are no further remarks. The function $f: \mathbb{R}^n \to \mathbb{R}$ is \emph{$p$-homogenous} if
\begin{align*}
f(\lambda x) = \lambda ^p f(x), ~x \in \mathbb{R}^n
\end{align*}
for any $\lambda \geq 0$. Let $C_p(\mathbb{R}^n)$ be the set of $p$-homogenous continuous functions on $\mathbb{R}^n$. We call $\Phi : \mathcal{P}_o ^n \to C_p(\mathbb{R}^n)$ a \emph{valuation} if
\begin{align*}
\Phi (K \cup L) + \Phi (K \cap L) = \Phi (K) + \Phi (L)
\end{align*}
whenever $K \cup L, K \cap L, K,L \in \mathcal{P}_o ^n$. Here the addition is the ordinary addition of functions.

We call $\Phi : \mathcal{P}_o ^n \to C_p(\mathbb{R}^n)$ is \emph{$\sln$ \text{\rm(or} $\gln$\text{\rm)} covariant} if
$$\Phi (\phi K) (x) = \Phi (K) (\phi ^t x)$$
for any $K \in \mathcal{P}_o ^n$ and any $\phi \in \sln~(\text{or}~\gln)$.

The map $Z: \mathcal{P}_o ^n \to \mathcal{K}_o ^n$ is an $\sln$ (or $\gln$) covariant $L_p$ Minkowski valuation if and only if $\Phi : P \mapsto h_{ZP}^p$ is an $\sln$ (or $\gln$) covariant valuation.


\begin{lem}[Haberl \cite{Hab12b} and Parapatits \cite{Par14b}]\label{lpf}
Let $n \geq 3$ and $\Phi$ map $\mathcal{P}_o ^n$ to $C_p(\mathbb{R}^n)$. \text{Assume} further that, for every $y \in \mathbb{R}^n$, the function $s \mapsto \Phi(sT^n)(y)$ is bounded from below on some non-empty open interval $I_y \subset (0,+\infty)$. Also assume that $\Phi$ is continuous at the \text{interval} $[o,e_1]$. Then $\Phi$ is an $\sln$ covariant valuation if and only if there exist constants $c_1,c_2,c_3,c_4 \in \mathbb{R}$ such that
\begin{align*}
\Phi P = c_1 h_{M_p^+ P} ^p + c_2h_{M_p^- P}^p + c_3 h_{P}^p + c_4 h_{-P}^p
\end{align*}
for every $P \in \mathcal {P}_o ^n$.
\end{lem}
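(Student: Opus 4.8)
\textbf{The ``if'' direction} is a routine check. Each of the four maps $P\mapsto h_{M_p^+ P}^p$, $P\mapsto h_{M_p^- P}^p$, $P\mapsto h_P^p$, $P\mapsto h_{-P}^p$ takes values in $C_p(\mathbb R^n)$ and is $\sln$ covariant in the sense of this section: for $M_p^\pm$ this was recorded in Section \ref{s2}, and for $h_{\pm P}^p$ it follows from $h_{K\cup L}=h_K\vee h_L$, $h_{K\cap L}=h_K\wedge h_L$ together with the trivial identity $(a\vee b)^p+(a\wedge b)^p=a^p+b^p$ for $a,b\ge 0$. Real linear combinations of $\sln$ covariant $C_p(\mathbb R^n)$-valued valuations are again such, so every $\Phi$ of the stated form is an $\sln$ covariant valuation; each is moreover continuous in the polytope, in particular at $[o,e_1]$, and for fixed $y$ the map $s\mapsto\Phi(sT^n)(y)$ is continuous on $(0,\infty)$ and hence bounded below on, say, $(1,2)$. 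So the two regularity hypotheses hold.

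\textbf{The ``only if'' direction: reduction and base cases.} Let $\Phi$ be as in the hypothesis. By Lemma \ref{lem4.1}, $\Phi(P)(x)=\Phi(P)(x|P)$, so $\Phi$ is determined by its restrictions to polytopes of each fixed dimension $d$; identifying $\lin\{e_1,\dots,e_d\}$ with $\mathbb R^d$, the restriction of $\Phi$ to $d$-dimensional polytopes in $\mathbb R^d$ is covariant under every invertible linear map $\psi$ of $\mathbb R^d$ (extend $\psi$ to an element of $\sln$ preserving $\mathbb R^d$ setwise; its behaviour off $\mathbb R^d$ is discarded by Lemma \ref{lem4.1}). Taking $\psi$ a dilation yields $\Phi(sT^d)=s^p\Phi(T^d)$ for every $d\le n-1$. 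I would induct on $d$. For $d=0$, $\Phi\{o\}=\Phi\{o\}\circ\phi^t$ for all $\phi\in\sln$; since $\sln$ is transitive on $\mathbb R^n\setminus\{o\}$ and $\Phi\{o\}$ is $p$-homogeneous with $p>0$, $\Phi\{o\}\equiv 0$. For $d=1$, $p$-homogeneity forces $\Phi([o,e_1])=c_3\,h_{[o,e_1]}^p+c_4\,h_{-[o,e_1]}^p$ with $c_3,c_4\in\mathbb R$ (these are \emph{the} constants of the lemma), and $\Phi([o,se_1])=s^p\Phi([o,e_1])$; by $\sln$ covariance the claimed formula holds on all $1$-dimensional polytopes.

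\textbf{The inductive step, the top dimension, and globalisation.} For $2\le d\le n$ apply the valuation property to $sT^d$ cut by the hyperplane $H_\lambda$ of Section \ref{s2}: $\Phi(sT^d)+\Phi(sT^d\cap H_\lambda)=\Phi(sT^d\cap H_\lambda^-)+\Phi(sT^d\cap H_\lambda^+)$. The slice $sT^d\cap H_\lambda=\phi_1\lambda^{1/n}s\hat T^{d-1}$ is $(d-1)$-dimensional, hence known by the inductive hypothesis, while the two pieces are $\sln$-images of dilates of $T^d$ (via $\phi_1,\phi_2$ for $d\le n-1$ and $\phi_3,\phi_4$ for $d=n$, as in Section \ref{s2}). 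For $d\le n-1$ the dilation is trivialised by $\Phi(sT^d)=s^p\Phi(T^d)$, so reading off the identity as $x$ varies and using continuity and the known $(d-1)$-dimensional term pins down $\Phi(T^d)=c_3\,h_{T^d}^p+c_4\,h_{-T^d}^p$ with the same $c_3,c_4$ (the coefficients of the edge contributions match by evaluating along the edges of $T^d$). For $d=n$ the dilation is genuine: after the substitution $t=s^n$ and a normalisation of the form $f(t)=t^{-p/n}\Phi(t^{1/n}T^n)(x_0)$ for suitably chosen directions $x_0$, the change of variables $\lambda=t_1/(t_1+t_2)$ turns the identity into an inhomogeneous Cauchy equation whose inhomogeneity comes from the already-classified $(n-1)$-dimensional slice; the hypothesis that $s\mapsto\Phi(sT^n)(y)$ is bounded below on an interval makes the homogeneous solution linear in $t$, so $f$ is affine in $t$, i.e.\ $\Phi(sT^n)(x)=s^p A(x)+s^{n+p}B(x)$ for some $A,B\in C_p(\mathbb R^n)$. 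Testing this on enough directions and invoking continuity identifies $A=c_3\,h_{T^n}^p+c_4\,h_{-T^n}^p$ (same $c_3,c_4$) and $B=c_1\,h_{M_p^+ T^n}^p+c_2\,h_{M_p^- T^n}^p$ for some $c_1,c_2\in\mathbb R$. Finally $\sln$ covariance carries the formula from the standard simplices $sT^d$ to all of $\mathcal T_o^n$, and the exhaustion $\mathcal P_1\subset\mathcal P_2\subset\dots$ of Section \ref{s2}, together with the additivity of the candidate expression under inclusion--exclusion, carries it to all of $\mathcal P_o^n$ --- exactly as in the proofs of Theorems \ref{thm1.1} and \ref{thm1.3}. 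The continuity assumption at $[o,e_1]$ is what eliminates the extra valuations supported on faces and edges through the origin (the maps $\Phi_{p;a_1,a_2}$ of the introduction) and forces the constants to be consistent across dimensions.

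\textbf{Main obstacle.} The decisive step is the top-dimensional one: choosing the normalisation of $f$ so that the inhomogeneous Cauchy equation is in a form to which the boundedness hypothesis applies, and --- more delicately --- verifying that the linear-in-$t$ degree of freedom it leaves, regarded as a function of $x$, is precisely an unrestricted combination of $h_{M_p^+ T^n}^p$ and $h_{M_p^- T^n}^p$ and not some other $p$-homogeneous function. This last point needs the cut identity to be tested on a sufficiently rich family of directions and is the technical heart of the argument, carried out by Parapatits in \cite{Par14b}.
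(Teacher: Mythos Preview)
The paper does not give its own proof of this lemma: it is quoted from Haberl \cite{Hab12b} and Parapatits \cite{Par14b}, with only the remark following the statement that Haberl's method extends to the present setting. So there is no in-paper argument to compare your proposal against line by line.

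That said, your sketch follows the same template the paper uses for the neighbouring Lemma~\ref{lem5.4} (the version \emph{without} continuity at $[o,e_1]$): reduce via Lemma~\ref{lem4.1}, induct on the dimension $d$ of $T^d$, use the $H_\lambda$-cut and the maps $\phi_1,\dots,\phi_4$ to set up a functional equation, and for $d=n$ solve an (inhomogeneous) Cauchy equation where the lower-boundedness hypothesis forces linearity. In that sense your approach is the intended one and is essentially correct in outline.

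The one genuine gap is the role of continuity at $[o,e_1]$. In the paper's proof of Lemma~\ref{lem5.8} one finds $a_2=a_3=\dots=a_{n-1}$ (equations \eqref{57a}--\eqref{64a}), but $a_1=\Phi(T^1)(e_1)$ is \emph{not} forced to equal $a_2=\Phi(T^2)(e_1)$ by the valuation and covariance properties alone; it is precisely this discrepancy that produces the extra valuations $\Phi_{p;a_1,a_2}$. Your inductive step for $2\le d\le n-1$ asserts that one reads off $\Phi(T^d)=c_3 h_{T^d}^p+c_4 h_{-T^d}^p$ ``with the same $c_3,c_4$'', but without continuity that is exactly what fails. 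You need to invoke continuity explicitly: e.g.\ let $T^2$ degenerate to $[o,e_1]$ (say via $[o,e_1,\varepsilon e_2]$ as $\varepsilon\to 0$) and use $\Phi([o,e_1,\varepsilon e_2])(e_1)\to\Phi([o,e_1])(e_1)$ to conclude $a_2=a_1$ (and likewise $b_2=b_1$). Once that is nailed down the rest of your outline goes through as in the paper's proof of Lemma~\ref{lem5.4}.
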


In \cite{Hab12b}, Haberl just considered the valuation $P \mapsto h_{ZP}$, where $Z$ is a Minkowski valuation. Hence he has the restrictions that $c_1,c_2,c_3,c_4 \geq 0$. However, his method also can be used to get this Lemma for $p=1$. This also works for Lemma \ref{lem5.7} below.



We remove the assumption that $\Phi$ is continuous at the interval $[o,e_1]$ and get the following result.


\begin{lem}\label{lem5.4}
Let $n \geq 3$ and $\Phi$ map $\mathcal{P}_o ^n$ to $C_p(\mathbb{R}^n)$. Assume further that, for every $y \in \mathbb{R}^n$, the function $s \mapsto \Phi(sT^n)(y)$ is bounded from below on some non-empty open interval $I_y \subset (0,+\infty)$. Then $\Phi$ is an $\sln$ covariant valuation if and only if there exist constants $a_1,a_2,b_1,b_2,c_1,c_2 \in \mathbb{R}$ such that
\begin{align*}
\Phi P = c_1 h_{M_p^+ P} ^p + c_2h_{M_p^- P}^p + \Phi_{p;a_1,a_2}P + \Phi_{p;b_1,b_2}(-P)
\end{align*}
for every $P \in \mathcal {P}_o ^n$, where $\Phi_{p;a_1,a_2}$ is defined as follows.
\end{lem}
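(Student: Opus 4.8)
The plan is to follow the architecture of the proofs of Theorems \ref{thm1.1} and \ref{thm1.3}, but with the $L_\infty$ Cauchy equation of Lemma \ref{cauchy} replaced by the ordinary additive one: a function $g:(0,\infty)\to\mathbb R$ with $g(t_1+t_2)=g(t_1)+g(t_2)$ that is bounded from below on a non-empty open interval is linear. The boundedness hypothesis on $s\mapsto\Phi(sT^n)(y)$ is included precisely to supply this real-variable input and to exclude pathological additive solutions; otherwise no regularity of $\Phi$ is used. The ``if'' direction is a direct verification. Since $M_p^+,M_p^-$ are $\sln$ covariant $L_p$ Minkowski valuations (Section \ref{s2}), $h_{M_p^\pm}^p$ are $\sln$ covariant valuations into $C_p(\mathbb R^n)$; and $\Phi_{p;a_1,a_2}(\cdot)$, $\Phi_{p;b_1,b_2}(-\,\cdot)$, being built from $h_P^p$ and $p$-th powers of support functions of the faces of $P$ through the origin, satisfy the valuation identity because of how such faces behave under a dissection $P=P_1\cup P_2$ with $\dim(P_1\cap P_2)=\dim P-1$ (a $k$-face of $P$ through $o$ is a face of exactly one $P_i$, or is $P_1\cap P_2$ itself, or splits as a face of $P_1$ and a face of $P_2$ meeting along a $(k-1)$-face through $o$). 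Homogeneity in $s$ gives the required lower bound for each summand.

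For the ``only if'' direction, let $\Phi$ be an $\sln$ covariant valuation satisfying the hypothesis. By Lemma \ref{lem4.1}, $\Phi(P)(x)=\Phi(P)(x|P)$, so $\Phi$ is determined by its values on the simplices $sT^d$ ($1\le d\le n$, $s>0$), with $\Phi(\{o\})=0$ forced by $\sln$ covariance and $p$-homogeneity. For $d\le n-1$ a diagonal map in $\sln$ scaling the first $d$ coordinates by $s$ (compensating on a later coordinate), together with Lemma \ref{lem4.1}, gives $\Phi(sT^d)=s^p\,\Phi(T^d)$. For $d=n$, where scaling leaves $\sln$, I apply the valuation identity to $sT^n=(sT^n\cap H_\lambda^-)\cup(sT^n\cap H_\lambda^+)$, use $sT^n\cap H_\lambda^-=\phi_3\lambda^{1/n}sT^n$, $sT^n\cap H_\lambda^+=\phi_4(1-\lambda)^{1/n}sT^n$, $sT^n\cap H_\lambda=\phi_1\lambda^{1/n}s\hat T^{n-1}$, $\sln$ covariance, and the already-known lower-dimensional homogeneity to handle the $\hat T^{n-1}$ term; evaluating at $x=e_n$ and substituting $t=s^n$ yields, after an affine change of unknown, an additive Cauchy-type equation in $t$. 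The boundedness hypothesis forces its solution to be affine, i.e. $\Phi(sT^n)(y)=s^{n+p}\alpha(y)+s^p\beta(y)$ for some $\alpha,\beta\in C_p(\mathbb R^n)$. This is the analogue of Lemma \ref{lem4.2}, and it exhibits the two scaling degrees: $n+p$, the degree of $h_{M_p^\pm}^p$, and $p$, the degree of $h_P^p$.

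One then chooses $c_1,c_2$ so that $c_1h_{M_p^+}^p+c_2h_{M_p^-}^p$ absorbs the degree-$(n+p)$ part on $T^n$ (the ``interior'' contribution; that this part is of this form, and that the remainder then carries no degree-$(n+p)$ component in any dimension, falls out of the homogeneous case, as in Lemma \ref{lpf}). Set $\Psi:=\Phi-c_1h_{M_p^+}^p-c_2h_{M_p^-}^p$, still an $\sln$ covariant valuation, now with $\Psi(sT^d)=s^p\Psi(T^d)$ for all $d$. To determine $F_d:=\Psi(T^d)\in C_p(\mathbb R^n)$ I argue by induction on $d$ and, within each $d$, by induction on the number $m$ of non-zero coordinates of $x$, exactly as in the proof of Theorem \ref{thm1.3}: the additive analogue of the cutting identity \eqref{30} for $T^d$ expresses $F_d(x)$, for $x$ with $m$ non-zero coordinates, in terms of $F_d$ and $F_{d-1}$ at vectors with fewer non-zero coordinates plus scalar factors from $\phi_3,\phi_4$; solving these relations forces $F_d$ to depend on only two real parameters per sign — the edge value $a_1=F_1(e_1)$ and a value $a_2$ that stabilises from dimension $2$ on — together with the face-through-origin correction terms in the definition of $\Phi_{p;a_1,a_2}$ (and likewise $\Phi_{p;b_1,b_2}$ for $-e_1$). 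Finally one passes from simplices to all of $\mathcal P_o^n$ via the filtration $\mathcal P_i$ and \eqref{303}: if $\Phi$ and the candidate agree on $\mathcal P_{i-1}$, then for $P=P_1\cup P_2\in\mathcal P_i$ with $P_1\cap P_2\in\mathcal P_{i-1}$ a common facet the valuation identity determines $\Phi(P)$, and the two sides coincide on $\mathcal P_i$.

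I expect the main obstacle to be the last part: carrying the double induction that determines $\Psi$ while tracking the scalar prefactors coming from $\phi_3,\phi_4$, and checking that the only surviving boundary freedom really organises into the two-parameter families $\Phi_{p;a_1,a_2}$ — equivalently, that no further independent constant can enter in dimensions $\ge 3$ and that the inclusion–exclusion corrections over faces through the origin are exactly what $\sln$ covariance and the valuation identity force. A secondary delicate point is the clean separation, already at the level of $sT^n$, of the degree-$(n+p)$ and degree-$p$ parts and the identification of the former with $h_{M_p^\pm}^p$. The boundedness hypothesis is used only in the homogeneity step for $sT^n$, but it is indispensable there, as it is what rules out the pathological additive solutions that would otherwise survive the absence of a continuity assumption.
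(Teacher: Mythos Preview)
Your architecture differs from the paper's and, as written, has a genuine gap. The paper reverses your order of subtraction: it first classifies $\Phi$ on lower–dimensional polytopes (Lemma~\ref{lem5.8}), showing there exist $a_1,a_2,b_1,b_2$ with $\Phi P=\Phi_{p;a_1,a_2}P+\Phi_{p;b_1,b_2}(-P)$ for $\dim P\le n-1$; then $\Phi':=\Phi-\Phi_{p;a_1,a_2}-\Phi_{p;b_1,b_2}(-\,\cdot)$ is a \emph{simple} $\sln$ covariant valuation, and the classification of simple valuations (Lemma~\ref{lem5.7}, due to Haberl and Parapatits) supplies the $c_1 h_{M_p^+}^p+c_2 h_{M_p^-}^p$ term. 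No Cauchy argument is run here at all; the full-dimensional homogeneity analysis is entirely hidden inside the cited Lemma~\ref{lem5.7}.

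Your route instead tries to peel off $c_1,c_2$ first. The Cauchy step at $x=\pm e_n$ is fine and does give $\Phi(sT^n)(\pm e_n)=s^{n+p}\alpha_\pm+s^p\beta_\pm$. The gap is the next sentence: you then assert that the degree-$(n+p)$ part is $c_1 h_{M_p^+}^p+c_2 h_{M_p^-}^p$ and that the remainder $\Psi$ satisfies $\Psi(sT^n)=s^p\Psi(T^n)$ for \emph{all} arguments, appealing to Lemma~\ref{lpf}. But Lemma~\ref{lpf} carries a continuity hypothesis at $[o,e_1]$ that you do not have, so the citation is invalid. Matching the $s^{n+p}$-coefficients only at $\pm e_i$ does not by itself force $\Psi(sT^n)(y)$ to be degree-$p$ homogeneous in $s$ for general $y$; the cutting maps $\phi_3^t,\phi_4^t$ move $y$, so the Cauchy reduction does not immediately globalize. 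One can in principle rescue this by folding the $s$-dependence into your double induction on $(d,m)$ and proving $\Psi(sT^n)(y)=s^p\Psi(T^n)(y)$ simultaneously with the identification of $\Psi(T^n)$, but that is substantially more work than you indicate, and it amounts to reproving the content of Lemma~\ref{lem5.7} rather than citing it. The paper's ordering avoids the issue cleanly: for $d\le n-1$ homogeneity is free from $\sln$ covariance, so Lemma~\ref{lem5.8} needs no Cauchy input, and once $\Phi'$ is simple, Lemma~\ref{lem5.7} applies directly (its hypothesis is simplicity plus the same boundedness condition, not continuity).

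A secondary point: your one-sentence sketch of the ``if'' direction underestimates the work. That $\Phi_{p;a_1,a_2}$ is a valuation is the content of Lemma~\ref{lem5.3}, which requires a careful case split on the position of the origin in $K,L$ and a bijection between the relevant face families (in particular the identity $\mathcal D^{j-1}\setminus\mathcal F_{j-1,o}(K\cup L)=\mathcal N^j\setminus\mathcal F_{j,o}(K\cap L)$); your description of how faces through $o$ behave under a single facet-splitting is correct for case (iv) there but does not cover the generic case (iii) where $\dim(K\cap L)=\dim K$.
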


For $1 \leq j \leq \dim P-1$, let $\mathcal{F}_{j,o} (P)$ denote the set of $j$-dimensional faces of $P \in \mathcal {P}_o^n$ that contain the origin.
Let $a_1,a_2 \in \mathbb{R}$. For $P \in \mathcal {P}_o ^n$, define $\Phi_{p;a_1,a_2}(P)$ by
\begin{align*}
\Phi_{p;a_1,a_2}P = a_1 h_P^p
+ (a_2-a_1) \sum_{1 \leq j \leq \dim P -1} (-1)^j \sum_{F \in \mathcal{F}_{j,o} (P)}h_{F}^p
\end{align*}
if $\dim P$ is odd; and
\begin{align*}
\Phi_{p;a_1,a_2}P = (2a_2 -a_1) h_P^p
+ (a_2-a_1) \sum_{1 \leq j \leq \dim P -1} (-1)^j \sum_{F \in \mathcal{F}_{j,o} (P)} h_{F}^p
\end{align*}
if $\dim P $ is even.

For $1 < p < \infty$, $n \geq 3$ and $p=1$, $n \geq 4$,
if we further assume that $\Phi P$ is non-negative and $(\Phi P)^{1/p}$ is sublinear for every $P \in \mathcal{P}_o ^n$,
then we obtain Theorem \ref{thm5.1} which is equivalent to Theorem \ref{thm1.4} and Theorem \ref{thm1.6}.

\begin{thm}\label{thm5.1}
Let $n \geq 3$, $1 < p < \infty$ or $n \geq 4$, $p=1$, and $\Phi$ map $\mathcal{P}_o ^n$ to $C_p(\mathbb{R}^n)$. Assume further that $\Phi P$ is non-negative and $(\Phi P)^{1/p}$ is sublinear for every $P \in \mathcal{P}_o ^n$. Then $\Phi$ is an $\sln$ covariant valuation if and only if there exist constants $a_1,b_1,c_1,c_2 \geq 0$ such that
\begin{align*}
\Phi P = c_1 h_{M_p^+ P} ^p + c_2h_{M_p^- P}^p + a_1 h_{P}^p + b_1 h_{-P}^p
\end{align*}
for every $P \in \mathcal {P}_o ^n$.
\end{thm}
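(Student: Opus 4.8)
The plan is to obtain Theorem~\ref{thm5.1} from Lemma~\ref{lem5.4} by using the two extra hypotheses — $\Phi P\ge 0$ and $(\Phi P)^{1/p}$ sublinear — to kill the ``complicated'' terms $\Phi_{p;a_1,a_2}$, $\Phi_{p;b_1,b_2}$ whenever $a_1\ne a_2$ (resp. $b_1\ne b_2$). Sufficiency is immediate: $M_p^+$, $M_p^-$, the identity and $P\mapsto-P$ are $\sln$ covariant $L_p$ Minkowski valuations, so for $a_1,b_1,c_1,c_2\ge0$ the body $c_1^{1/p}M_p^+P+_pc_2^{1/p}M_p^-P+_pa_1^{1/p}P+_pb_1^{1/p}(-P)$ is well defined, is an $\sln$ covariant $L_p$ Minkowski valuation in $P$, and its support function raised to the $p$-th power is the asserted $\Phi P$, which is then non-negative with sublinear $p$-th root. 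For necessity, non-negativity gives the boundedness-from-below hypothesis of Lemma~\ref{lem5.4}, so I would start from $\Phi P=c_1h_{M_p^+P}^p+c_2h_{M_p^-P}^p+\Phi_{p;a_1,a_2}P+\Phi_{p;b_1,b_2}(-P)$ with real $a_1,a_2,b_1,b_2,c_1,c_2$; since $\Phi_{p;a,a}Q=a\,h_Q^p$ for any $a$ and any $Q$ (whichever the parity of $\dim Q$), it remains to force $c_1,c_2\ge0$ and $a_1=a_2\ge0$, $b_1=b_2\ge0$.

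First I would get $c_1,c_2\ge0$ by testing on the stretched simplices $P_N=[o,Ne_1,e_2,\dots,e_n]$, $N>0$. The faces of $P_N$ through $o$ are the $[o,S]$ with $S\subseteq\{Ne_1,e_2,\dots,e_n\}$; evaluating at $e_n$, only faces having $e_n$ as a vertex contribute, and (using $\sum_{k=0}^{n-2}(-1)^k\binom{n-1}k=(-1)^n$) one finds $\Phi_{p;a_1,a_2}P_N(e_n)=a_2$ and $\Phi_{p;b_1,b_2}(-P_N)(e_n)=0$, both independent of $N$, whereas $h_{M_p^-P_N}(e_n)=0$ and $h_{M_p^+P_N}(e_n)^p=\int_{P_N}y_n^p\,dy=N\int_{T^n}y_n^p\,dy$. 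Hence $0\le\Phi P_N(e_n)=c_1N\int_{T^n}y_n^p\,dy+a_2$ for all $N$ forces $c_1\ge0$; the mirror family $[o,-Ne_1,e_2,\dots,e_n]$ in direction $-e_n$ gives $c_2\ge0$.

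Next, to force $a_1=a_2$ and $b_1=b_2$ I would use a ``thin'' test polytope on which the moment terms cannot repair a loss of convexity. For $1<p<\infty$ take $Q=T^2=[o,e_1,e_2]$: as $\dim Q=2<n$ we have $M_p^\pm Q=\{o\}$, so $\Phi Q=\Phi_{p;a_1,a_2}T^2+\Phi_{p;b_1,b_2}(-T^2)$, with edges through $o$ being $[o,\pm e_1]$, $[o,\pm e_2]$. Restricting the convex function $t\mapsto\Phi Q(e_1+te_2)^{1/p}$ (this line avoids $o$), a short computation gives $\Phi Q(e_1+te_2)=a_2-(a_2-a_1)t^p$ on $[0,1]$ and $=a_2t^p+a_1-a_2$ on $[1,\infty)$; non-negativity along the line yields $a_1,a_2\ge0$, and a second-derivative computation shows this restriction is concave and non-affine on $(0,1)$ unless $a_2=a_1$ or $a_2=0$, with the remaining case $a_2=0<a_1$ killed by a concave corner at $t=1$ — each alternative contradicting convexity, so $a_1=a_2$; the reflected line gives $b_1=b_2$. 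For $p=1$, $n\ge4$ I would run the same scheme on the four-dimensional simplex $Q=[-e_1,e_1,e_2,e_3,e_4]\subset\mathbb R^n$ (full-dimensional if $n=4$, lower-dimensional if $n>4$), whose faces through $o$ are the $[\{-e_1,e_1\}\cup S]$ with $S\subseteq\{e_2,e_3,e_4\}$ (and the reflected ones for $-Q$): along a suitable line $\Phi_{1;a_1,a_2}Q+\Phi_{1;b_1,b_2}(-Q)$ develops a concave corner unless $a_1=a_2$ and $b_1=b_2$, while $c_1h_{M_1^+Q}+c_2h_{M_1^-Q}$ is $C^1$ off the origin — its $x$-gradient $c_1\int_{Q\cap\{x\cdot y\ge0\}}y\,dy-c_2\int_{Q\cap\{x\cdot y\le0\}}y\,dy$ is continuous in $x$ — so it cannot smooth that corner out; convexity of $(\Phi Q)^{1/p}$ then forces $a_1=a_2$, $b_1=b_2$, and non-negativity gives $a_1,b_1\ge0$. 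Substituting $\Phi_{p;a_1,a_1}P=a_1h_P^p$ and $\Phi_{p;b_1,b_1}(-P)=b_1h_{-P}^p$ into the representation finishes the proof.

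The hard part will be the last step for $p=1$, $n\ge4$: pinning down the line on which $\Phi_{1;a_1,a_2}[-e_1,e_1,e_2,e_3,e_4]+\Phi_{1;b_1,b_2}(-[-e_1,e_1,e_2,e_3,e_4])$ stops being convex, and bookkeeping the alternating sums of face support functions — this is exactly the phenomenon flagged in the introduction and presumably isolated in Lemma~5.1. By contrast the $T^2$ computation for $p>1$, the stretched-simplex estimate for $c_1,c_2$, and the collapse $\Phi_{p;a,a}P=ah_P^p$ are routine.
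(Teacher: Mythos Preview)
Your overall strategy — deduce the representation from Lemma~\ref{lem5.4} and then use non-negativity and sublinearity to force $a_1=a_2$, $b_1=b_2$ — is exactly the paper's (its Lemma~\ref{lem5.1}). The treatment of $c_1,c_2\ge 0$ is a harmless variant of the paper's scaling argument on $\alpha T^n$. The genuine problem is your $p>1$ step on $Q=T^2$.

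Your claim that $t\mapsto\big(a_2-(a_2-a_1)t^p\big)^{1/p}$ is concave on $(0,1)$ unless $a_2=a_1$ or $a_2=0$ is false when $a_1>a_2>0$: a direct computation gives
\[
\Big(\big(a_2-(a_2-a_1)t^p\big)^{1/p}\Big)''\ \propto\ (p-1)\,a_2\,(a_1-a_2)\,t^{p-2}>0,
\]
so the restriction is strictly \emph{convex} there, and one checks the corner at $t=1$ is convex as well whenever $a_1\le 2a_2$. More decisively, testing on $T^2$ cannot succeed at all: for $p=2$, $a_1=2$, $a_2=1$, $b_1=b_2=1$ one finds from Proposition~\ref{pro5.1} that $\big(\Phi_{2;a_1,a_2}T^2+\Phi_{2;b_1,b_2}(-T^2)\big)^{1/2}$ equals $|x|$ outside the third quadrant and $\max\{-x_1,-x_2\}$ inside it — this is the support function of the genuinely convex body obtained from the unit disk by replacing its third-quadrant arc with the chord $[(-1,0),(0,-1)]$. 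Hence $a_1\ne a_2$ survives every sublinearity test on $T^2$.

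The paper avoids this by working with $T^3$ (isolating $\Phi_{p;a_1,a_2}T^3+\Phi_{p;b_1,b_2}(-T^3)$ as the scaling limit $\lim_{\alpha\to 0^+}\alpha^{-1}(\Phi(\alpha T^3))^{1/p}$, so the moment terms drop out). It first gets $a_1\le a_2$ from the sublinearity inequality applied to $e_1+\lambda e_2+(1-\lambda)e_3=\lambda(e_1+e_2)+(1-\lambda)(e_1+e_3)$, and then — crucially using $p>1$ — gets $a_2\le a_1$ by a projection argument: the body $K$ with $h_K^p=\Phi_{p;a_1,a_2}T^3+\Phi_{p;b_1,b_2}(-T^3)$ projects onto each coordinate plane as $a_2^{1/p}T^2+_p b_2^{1/p}(-T^2)$, and since for $p>1$ the only preimage of the vertex $a_2^{1/p}e_1$ under both projections is $a_2^{1/p}e_1$ itself, one has $a_2^{1/p}e_1\in K$, whence $a_2^{1/p}\le h_K(e_1+e_2+e_3)=a_1^{1/p}$. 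Your $p=1$, $n\ge4$ sketch is closer to the paper (which also uses $[-e_1,e_1,e_2,e_3,e_4]$), but note that the explicit sublinearity test there — with the triple $(1,3,3,2),(1,3,2,3),(2,6,5,5)$ — yields only $a_2\le a_1$; the inequality $a_1\le a_2$ still comes from the $T^3$ argument, which you have not supplied.
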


Now we begin to prove Lemma \ref{lem5.4} and Theorem \ref{thm5.1}.

\emph{The inclusion-exclusion principle} states that a function-valued valuation $\Phi$ satisfies
\begin{align*}
\Phi (T_1 \cup \dots \cup T_m) = \sum_i \Phi(T_i) - \sum_{i<j} \Phi(T_i \cap T_j) + \dots
\end{align*}
for any $T_1,\dots,T_m, T_1 \cup \dots \cup T_m \in \mathcal {T}_o^n$. In particular, $\Phi (T_1 \cup \dots \cup T_m)$ does not dependent on the choice of $T_1,\dots,T_m$; see Ludwig and Reitzner \cite{LR2006elementary}.

\begin{proof}[\textbf{Proof of Lemma \ref{lem5.4}}] For $a_1,a_2 \in \mathbb{R}$, we first need to show that $\Phi_{p;a_1,a_2}$ is a valuation.
\begin{lem}\label{lem5.3}
For $a_1,a_2 \in \mathbb{R}$, $\Phi_{p;a_1,a_2}$ is a $\gln$ covariant valuation.
\end{lem}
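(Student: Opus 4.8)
The plan is to verify the two defining properties separately, starting with $\operatorname{GL}(n)$ covariance, which is essentially bookkeeping, and then proving the valuation property, which is the real content. For covariance, fix $\phi\in\operatorname{GL}(n)$ and $P\in\mathcal P_o^n$. The key observations are that $\phi$ preserves dimension, that it maps the face lattice of $P$ isomorphically to that of $\phi P$, and — crucially — that $\phi$ maps faces through the origin to faces through the origin, since $\phi o = o$. Hence $\phi$ induces a bijection $\mathcal F_{j,o}(P)\to\mathcal F_{j,o}(\phi P)$, $F\mapsto\phi F$, for each $j$. Since $h_{\phi F}(x)=h_F(\phi^t x)$ for every convex body $F$, each summand $h_{\phi F}^p$ evaluated at $x$ equals $h_F^p$ evaluated at $\phi^t x$, and likewise $h_{\phi P}(x)^p=h_P(\phi^t x)^p$. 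Because $\dim\phi P=\dim P$, the same case (odd or even) of the definition applies on both sides with the same coefficients, so $\Phi_{p;a_1,a_2}(\phi P)(x)=\Phi_{p;a_1,a_2}(P)(\phi^t x)$, which is exactly $\operatorname{GL}(n)$ covariance.

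For the valuation property I would use the inclusion-exclusion principle recalled just above the lemma: since every member of $\mathcal P_o^n$ can be triangulated into simplices in $\mathcal T_o^n$, it suffices to check that $\Phi_{p;a_1,a_2}$ extends from $\mathcal T_o^n$ consistently, i.e. that the formula, when applied to a polytope $P$ and to any triangulation $P=T_1\cup\dots\cup T_m$, gives $\sum_i\Phi_{p;a_1,a_2}(T_i)-\sum_{i<j}\Phi_{p;a_1,a_2}(T_i\cap T_j)+\dots$. Equivalently — and this is the cleaner route — I would write $\Phi_{p;a_1,a_2}(P)$ as a sum over \emph{all} faces $G$ of $P$ (including $P$ itself and excluding the vertex $\{o\}$ in the appropriate way) of a term depending only on $G$ and on whether $o\in G$, then invoke the fact that for any fixed convex body $G$ the map $P\mapsto [\![G\text{ is a face of }P]\!]\,h_G^p$ is a valuation on polytopes containing $G$. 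Concretely, the standard fact (Ludwig–Reitzner, or the theory of the inclusion–exclusion / Euler-type valuations) is that for a fixed polytope $G$ the indicator-type functional counting whether $G$ occurs as a face is a valuation; combining these linearly with the coefficients $a_1$, $(a_2-a_1)(-1)^j$, and absorbing the parity correction $(2a_2-a_1)$ versus $a_1$ on the leading term, yields that $\Phi_{p;a_1,a_2}$ is a valuation. The parity bookkeeping is precisely designed so that when a simplex is cut by a hyperplane through the origin, the newly created lower-dimensional faces through $o$ enter with signs that telescope correctly.

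The main obstacle is the valuation identity itself, and within it the verification that the parity-dependent leading coefficient ($a_1$ in the odd case, $2a_2-a_1$ in the even case) is forced and makes the alternating face sum additive under dissection. The natural way to nail this is to reduce, via inclusion–exclusion, to a single elementary move: splitting one simplex $T\in\mathcal T_o^n$ by a hyperplane $H$ through the origin into $T\cap H^-$ and $T\cap H^+$, with $T\cap H$ the shared facet, and to check
\begin{align*}
\Phi_{p;a_1,a_2}(T\cap H^-)+\Phi_{p;a_1,a_2}(T\cap H^+)=\Phi_{p;a_1,a_2}(T)+\Phi_{p;a_1,a_2}(T\cap H).
\end{align*}
Here one tracks how the faces of $T$ through the origin are partitioned among $T\cap H^\pm$ and $T\cap H$, noting that each such face of $T$ either lies in $H^-$, lies in $H^+$, is split (contributing to both and creating a new face one dimension lower inside $H$), and that the dimension of the ambient simplex drops by one for $T\cap H$, flipping the parity case. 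Carrying out this local computation — which is a finite combinatorial check on the face structure of a simplex cut by a central hyperplane, exactly the situation encoded by the $\phi_i$ and $T^d,\hat T^{d-1}$ from Section \ref{s2} — is the one genuinely technical point; everything else is formal.
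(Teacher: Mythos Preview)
Your covariance argument is fine and matches the paper's one-line observation. The valuation argument, however, has a genuine gap.

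The ``cleaner route'' rests on the claim that for a fixed polytope $G$ the map $P\mapsto[\![G\text{ is a face of }P]\!]\,h_G^p$ is a valuation. This is false. Take $K=[o,e_1,e_2]$ and $L=[o,-e_1,e_2]$ in $\mathbb R^3$, so $K\cup L=[e_1,e_2,-e_1]$ and $K\cap L=[o,e_2]$. For $G=[o,e_2]$, the segment $G$ is an edge of both $K$ and $L$ but is \emph{not} a face of $K\cup L$ (the origin is not even a vertex of that triangle), and it is only the improper face of $K\cap L$. Whatever convention you adopt for the improper face, the valuation identity fails: $0+\{0,1\}\neq 1+1$. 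So you cannot build $\Phi_{p;a_1,a_2}$ as a linear combination of valuations of this type; the alternating signs and the parity correction on the leading term are precisely what make the \emph{combination} additive, and that cancellation is the content of the lemma, not an input.

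Your fallback---reduce via inclusion--exclusion to the elementary move of cutting a simplex $T\in\mathcal T_o^n$ by a central hyperplane---is also incomplete as stated. That check only shows the restriction to $\mathcal T_o^n$ is a weak valuation and hence extends to \emph{some} valuation on $\mathcal P_o^n$; you still owe the verification that this extension coincides with the explicit formula on all of $\mathcal P_o^n$, and that step is essentially the lemma again. To make this route work you would at minimum need the hyperplane-cut identity for an arbitrary $P\in\mathcal P_o^n$ (not just simplices), together with the fact that on $\mathcal P_o^n$ hyperplane cuts through the origin generate all admissible pairs.

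The paper takes the direct road: it checks $\Phi_{p;a_1,a_2}(K\cup L)+\Phi_{p;a_1,a_2}(K\cap L)=\Phi_{p;a_1,a_2}(K)+\Phi_{p;a_1,a_2}(L)$ for arbitrary $K,L\in\mathcal P_o^n$ with $K\cup L$ convex, splitting into four cases according to whether $o$ lies in the relative interior or boundary of $K$ and $L$ and whether $\dim(K\cap L)$ equals $d$ or $d-1$. The case $\dim(K\cap L)=d-1$ is your hyperplane-cut situation (for general $P$, not just simplices). The substantial case is $\dim(K\cap L)=d$ with $o$ on both boundaries: there the paper partitions $\mathcal F_{j,o}(K)$ and $\mathcal F_{j,o}(L)$ into five explicit pieces and tracks which faces survive, merge, or split in $K\cup L$ and $K\cap L$, using the key combinatorial identity $\mathcal D^{j-1}\setminus\mathcal F_{j-1,o}(K\cup L)=\mathcal N^{j}\setminus\mathcal F_{j,o}(K\cap L)$ to make the alternating sum telescope. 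That bookkeeping is exactly the cancellation your face-indicator shortcut was trying to avoid, and it cannot be skipped.
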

\begin{proof}
It is easy to see from the definition that $\Phi_{p;a_1,a_2}$ is $\gln$ covariant. Next, we prove that $\Phi_{p;a_1,a_2}$ is a valuation.

Let $K,L \in \mathcal {P}_o ^n$, $K \neq L$. To show that
\begin{align}\label{71}
\Phi_{p;a_1,a_2}(K \cup L) + \Phi_{p;a_1,a_2}(K \cap L) = \Phi_{p;a_1,a_2} (K) + \Phi_{p;a_1,a_2} (L)
\end{align}
whenever $K \cup L$ is convex,
we can assume that $\dim K =\dim L = \dim (K \cup L)$, denoted by $d$. Otherwise (\ref{71}) holds trivially since $K \subset L$ or $L \subset K$.
Hence, we only need to consider the following four cases: \\
(\rmnum{1}) $ o \in \text{relint}\,K \cap \text{relint}\,L $; \\
(\rmnum{2}) $ o \in \text{relint}\,K$, and $o \in \text{relbd}\, L$; \\
(\rmnum{3}) $o \in \text{relbd}\, K \cap \text{relbd}\, L$ and $\dim(K \cap L) = d$; \\
(\rmnum{4}) $o \in \text{relbd}\, K \cap \text{relbd}\, L$ and $\dim(K \cap L) = d - 1$.

First we notice that the map $P \mapsto h_P, P \in \mathcal {P}_o^n$ is a valuation. Hence (\ref{71}) holds true for the case (\rmnum{1}). Also, for case (\rmnum{2}), (\rmnum{3}), we only need to consider the faces containing the origin.

For the case (\rmnum{2}), since $K \cup L$ is convex, we have $\bigcup_{1 \leq j \leq d -1} \mathcal{F}_{j,o} (K \cap L) = \bigcup_{1 \leq j \leq d -1} \mathcal{F}_{j,o}(L)$. Hence (\ref{71}) also holds true.

We will denote the elements of $\mathcal{F}_{j,o} (K)$ by $F_K^j$, and the elements of $\mathcal{F}_{j,o} (L)$ by $F_L^j$.

Now we deal with the case (\rmnum{3}).
For $1 \leq j \leq d-1$, since $K \cup L$ is convex, we can separate $\mathcal{F}_{j,o}(K)$ and $\mathcal{F}_{j,o}(L)$ into five disjoint parts, respectively:
\begin{align}\label{72}
\mathcal{F}_{j,o}(K) &=\mathcal {A}_K^j \cup \mathcal {B}_K^j \cup \mathcal {C}_K^j \cup \mathcal {D}_K^j \cup \mathcal {G}_K^j,
\end{align}
where
\begin{align*}
\mathcal {A}_K^j&= \{ F_K^j : F_K^j \cap \text{relint}\,L \neq \emptyset \}, \nonumber \\
\mathcal {B}_K^j&= \{ F_K^j  : F_K^j \cap L^c \neq \emptyset, \nexists F_L^j~\text{s.t.}~ F_L^j \subset \text{lin} F_K^j \}, \nonumber \\
\mathcal {C}_K^j&= \{ F_K^j  : \exists F_L^j \neq F_K^j,\exists H \in Gr(n,j)~\text{s.t.}~ F_L^j \cup F_K^j \subset H\}, \nonumber \\
\mathcal {D}_K^j&= \{ F_K^j  : \exists F_L^j = F_K^j \},\nonumber \\
\mathcal {G}_K^j&= \{ F_K^j  : \exists F_L^i, i > j ~\text{s.t.}~ F_K^j \subset \text{relint}\,F_L^i\};
\end{align*}
and
\begin{align}\label{73}
\mathcal{F}_{j,o}(L) &=\mathcal {A}_L^j \cup \mathcal {B}_L^j \cup \mathcal {C}_L^j \cup \mathcal {D}_L^j \cup \mathcal {G}_L^j,
\end{align}
where
\begin{align*}
\mathcal {A}_L^j&= \{ F_L^j : F_L^j \cap \text{relint}\,K \neq \emptyset \}, \nonumber \\
\mathcal {B}_L^j&= \{ F_L^j : F_L^j \cap K^c \neq \emptyset, \nexists F_K^j~\text{s.t.}~ F_K^j \subset \text{lin} F_L^j\}, \nonumber\\
\mathcal {C}_L^j&= \{ F_L^j : \exists F_K^j \neq F_L^j,\exists H \in Gr(n,j)~\text{s.t.}~ F_K^j \cup F_L^j \subset H\}, \nonumber \\
\mathcal {D}_L^j&= \{ F_L^j  : \exists F_K^j = F_L^j \},\nonumber \\
\mathcal {G}_L^j&= \{ F_L^j  : \exists F_K^i, i > j ~\text{s.t.}~ F_L^j \subset \text{relint}\,F_K^i\}.
\end{align*}

Set $\mathcal {D}^j := \mathcal {D}_K^j = \mathcal {D}_L^j$.
Since $\text{relbd}\, (K \cup L) = (\text{relbd}\, K \cap L^c) \cup (\text{relbd}\, L \cap K^c) \cup (\text{relbd}\, K \cap \text{relbd}\, L)$,
$\text{relbd}\, (K \cap L) = (\text{relbd}\, K \cap \text{relint}\,L) \cup (\text{relbd}\, L \cap \text{relint}\,K) \cup (\text{relbd}\, K \cap \text{relbd}\, L)$ and $K \cup L$ is convex, we have
\begin{align}\label{74}
&\mathcal{F}_{j,o}(K \cup L) =\mathcal {B}_K^j \cup \mathcal {B}_L^j \cup \mathcal {M}^j \cup (\mathcal {D}^j \cap \mathcal{F}_{j,o}(K \cup L)),
\end{align}
where
\begin{align*}
&\mathcal {M}^j=\{ F_K^j \cup F_L^j: F_K^j \in \mathcal {C}_K^j,F_L^j \in \mathcal {C}_L^j, \exists H \in Gr(n,j), F_K^j \cup F_L^j \subset H \};
\end{align*}
and
\begin{align}\label{75}
&\mathcal{F}_{j,o}(K \cap L) = \mathcal {A}_K^j \cup \mathcal {A}_L^j \cup (\mathcal {N}^j \cap \mathcal{F}_{j,o}(K \cap L)) \cup \mathcal {D}^j \cup \mathcal {G}_K^j \cup \mathcal {G}_L^j,
\end{align}
where
\begin{align*}
&\mathcal {N}^j= \{F_K ^j \cap F_L^j: F_K^j \in \mathcal {C}_K^j,F_L^j \in \mathcal {C}_L^j, \exists H \in Gr(n,j), F_K^j \cup F_L^j \subset H\}.
\end{align*}


Combining (\ref{72}), (\ref{73}), (\ref{74}), (\ref{75}) with the definition of $\Phi _{p;a_1,a_2}$, if
\begin{align}\label{103}
&\sum_{1 \leq j \leq d-1} (-1)^j \sum_{F_K^{j} \in \mathcal{C}_{K}^{j}} h_{F_K^{j}}^p
+ \sum_{1 \leq j \leq d -1} (-1)^j \sum_{F_L^{j} \in \mathcal{C}_{L}^{j}} h_{F_L^{j}}^p
+ 2 \sum_{1 \leq j \leq d -1} (-1)^j \sum_{F \in \mathcal{D}^{j}} h_{F}^p  \nonumber \\
=& \sum_{1 \leq j \leq d-1} (-1)^j \sum_{F_{K\cup L}^{j} \in \mathcal {M}^j} h_{F_{K\cup L}^{j}}^p
+ \sum_{1 \leq j \leq d-1} (-1)^j \sum_{F_{K\cup L}^{j} \in (\mathcal {D}^j \cap \mathcal{F}_{j,o}(K \cup L))} h_{F_{K\cup L}^{j}}^p \nonumber \\
& \qquad + \sum_{1 \leq j \leq d-1} (-1)^j \sum_{F_{K\cap L}^{j} \in (\mathcal {N}^j  \cap \mathcal{F}_{j,o}(K \cup L)) } h_{F_{K\cap L}^{j}}^p + \sum_{1 \leq j \leq d -1} (-1)^j \sum_{F \in \mathcal{D}^{j}} h_{F}^p,
\end{align}
then (\ref{71}) holds true.

Let $F_K^j \in \mathcal{C}_K^j, F_L^j \in \mathcal{C}_L^j$ and $F_K^j \cup F_L^j$ lie in the same $j$-dimensional plane. Since $F_K^j \cup F_L^j$ is convex, $h_{F_K^j \cup F_L^j}^p + h_{F_K^j \cap F_L^j}^p = h_{F_K^j}^p + h_{F_L^j}^p$. Thus
\begin{align}\label{104}
&\sum_{1 \leq j \leq d-1} (-1)^j \sum_{F_K^{j} \in \mathcal{C}_{K}^{j}} h_{F_K^{j}}^p + \sum_{1 \leq j \leq d -1} (-1)^j \sum_{F_L^{j} \in \mathcal{C}_{L}^{j}} h_{F_L^{j}}^p \nonumber \\
& = \sum_{1 \leq j \leq d-1} (-1)^j \sum_{F_{K\cup L}^{j} \in \mathcal {M}^j} h_{F_{K\cup L}^{j}}^p + \sum_{1 \leq j \leq d-1} (-1)^j \sum_{F_{K\cap L}^{j} \in (\mathcal {N}^j  \cap \mathcal{F}_{j,o}(K \cup L)) } h_{F_{K\cap L}^{j}}^p \nonumber \\
& \qquad \qquad \qquad +\sum_{1 \leq j \leq d-1} (-1)^j \sum_{F_K^j \cap F_L^j \in (\mathcal {N}^j \setminus \mathcal{F}_{j,o}(K \cup L)) } h_{F_K^j \cap F_L^j}^p.
\end{align}
Let $F_K ^j \cap F_L^j \in \mathcal {N}^j \setminus \mathcal{F}_{j,o}(K \cap L)$. Hence $F_K ^j \cap F_L^j$ is a $(j-1)$-face of both $K$ and $L$ that contains the origin. Also $F_K ^j \cap F_L^j$ is not a $(j-1)$-face of $K \cup L$. Hence $F_K ^j \cap F_L^j \in \mathcal {D}^{j-1} \setminus \mathcal{F}_{j-1,o}(K \cup L)$. That means $\mathcal {N}^j \setminus \mathcal{F}_{j,o}(K \cap L) \subset \mathcal {D}^{j-1} \setminus \mathcal{F}_{j-1,o}(K \cup L)$.
On the other hand, $\mathcal {D}^{j-1} \setminus \mathcal{F}_{j-1,o}(K \cup L) \subset \mathcal {N}^j \setminus \mathcal{F}_{j,o}(K \cap L)$. Indeed, for $F \in \mathcal {D}^{j-1} \setminus \mathcal{F}_{j-1,o}(K \cup L)$, there exist an $i \geq j$ such that $F \subset \text{relint}\, F_{K \cup L}^i$. Then $i=j$ since otherwise $F$ will be contained in the relative interior of an $(i-1)$-face of $K$ which is a contradiction for the fact that $F$ is a $(j-1)$-face of $K$. Hence
\begin{align}\label{102}
\mathcal {D}^{j-1} \setminus \mathcal{F}_{j-1,o}(K \cup L) = \mathcal {N}^j \setminus \mathcal{F}_{j,o}(K \cap L).
\end{align}
Combining (\ref{104}) with (\ref{102}), (\ref{103}) holds true since
\begin{align*}
0=&\sum_{1 \leq j \leq d -1} (-1)^j \sum_{F \in \mathcal{D}^{j}} h_{F}^p
- \sum_{1 \leq j \leq d-2} (-1)^j \bigg(\sum_{F \in (\mathcal {D}^{j} \setminus \mathcal{F}_{j,o}(K \cup L))} h_{F}^p +  \sum_{F \in (\mathcal {D}^j \cap \mathcal{F}_{j,o}(K \cup L))} h_{F}^p \bigg) \nonumber\\
& \qquad \qquad -(-1)^{d-1} \sum_{F \in (\mathcal {D}^{d-1} \cap \mathcal{F}_{d-1,o}(K \cup L))} h_{F}^p
\end{align*}
(since $\mathcal {D}^{d-1} \cap \mathcal{F}_{d-1,o}(K \cup L) = \mathcal {D}^{d-1}$ or $\mathcal {D}^{d-1} \cap \mathcal{F}_{d-1,o}(K \cup L) = \emptyset$).


%
%

For case (\rmnum{4}), set $M = K \cup L$. There exists a hyperplane $H$ through the origin such that $K = M \cap H^+$, $L = M \cap H^-$ and $K \cap L = M \cap H$. Note that $\dim M =d$, $\dim (M \cap H) =d-1$ and $M \cap H$ is a $(d-1)$-face of $M \cap H^+$ and $M \cap H^-$, respectively. For $1 \leq j \leq d-1$, it is easy to see that
\begin{align*}
\mathcal{F}_{j,o}(M) &= \{F_{(M \cap H^+)}^j \in \mathcal{F}_{j,o}(M)\} \cup \{F_{(M \cap H^-)}^j \in \mathcal{F}_{j,o}(M)\} \cup \{F_{(M \cap H^+)}^j \cup F_{(M \cap H^-)}^j \in \mathcal{F}_{j,o}(M)\}
\end{align*}
and
\begin{align*}
\mathcal{F}_{j-1,o}(M \cap H) &= \{F_{(M \cap H^+)}^j \cap F_{(M \cap H^-)}^j:  F_{(M \cap H^+)}^j \cup F_{(M \cap H^-)}^j \in \mathcal{F}_{j,o}(M)\}.
\end{align*}
For $F_{(M \cap H^+)}^j \cup F_{(M \cap H^-)}^j \in \mathcal{F}_{j,o}(M)$, since
$$h_{F_{(M \cap H^+)}^j \cup F_{(M \cap H^-)}^j}^p + h_{F_{(M \cap H^+)}^j \cap F_{(M \cap H^-)}^j}^p = h_{F_{(M \cap H^+)}^j}^p + h_{F_{(M \cap H^-)}^j}^p,$$
we can check step by step that
\begin{align*}
&\sum_{1 \leq j \leq d-1} (-1)^j \sum_{F_{(M \cap H^+)}^{j} \in (\mathcal{F}_{j,o}(M \cap H^+) \setminus \{M \cap H \})} h_{F_{(M \cap H^+)}^{j}}^p \\
&\qquad \qquad + \sum_{1 \leq j \leq d -1} (-1)^j \sum_{F_{(M \cap H^-)}^{j} \in (\mathcal{F}_{j,o}(M \cap H^-)\setminus \{M \cap H \})} h_{F_{(M \cap H^-)}^{j}}^p \\
& =\sum_{1 \leq j \leq d-1} (-1)^j \sum_{F_{M}^{j} \in \mathcal{F}_{j,o}(M)} h_{F_{M}^{j}}^p
+ \sum_{1 \leq j \leq d -2} (-1)^j \sum_{F_{M \cap H}^j \in \mathcal{F}_{j,o}(M \cap H)} h_{F_{(M \cap H)}^j}^p.
\end{align*}
Now we only need to show that
\begin{align}\label{82}
\left(a_1 h_{M \cap H^+}^p + (a_2-a_1) h_{M \cap H}^p\right) + \left(a_1 h_{M \cap H^-}^p + (a_2-a_1) h_{M \cap H}^p\right) = a_1 h_M^p + (2a_2-a_1)h_{M \cap H}^p
\end{align}
if $d$ is odd, and
\begin{align}\label{83}
&\left((2a_2-a_1) h_{M \cap H^+}^p - (a_2-a_1) h_{M \cap H}^p\right) + \left((2a_2-a_1) h_{M \cap H^-}^p - (a_2-a_1) h_{M \cap H}^p\right) \nonumber \\
& \qquad \qquad = (2a_2-a_1) h_M^p + a_1 h_{M \cap H}^p
\end{align}
if $d$ is even. Indeed, (\ref{82}) and (\ref{83}) hold true since
$h_{M \cap H^+}^p + h_{M \cap H^-}^p = h_M^p + h_{M \cap H}^p.$
\end{proof}

For $a \in \mathbb{R}$, we write $a^p$ for $\text{sgn}(a) |a|^p$, where $\text{sgn}(a)= 1$ if $a \geq 0$, $\text{sgn}(a)= -1$ if $a < 0$.
\begin{pro}\label{pro5.1}
Let $0 \leq m \leq n$ and $v_0 \in \mathbb{R}^n$ be such that $o \in \text{\rm relint}~[v_0,e_1,\dots,e_m]$ and let $x = (x_1,\dots,x_d)^t \in \mathbb{R}^d$. Set $\alpha_1 = \max \{v_0 \cdot x,x_1,\dots,x_m \}$, $\alpha_2 = \min \{v_0 \cdot x,x_1,\dots,x_m \}$, $\beta_1 = \max \{ x_{m+1}, \dots, x_d\}$ and $\beta_2 = \min \{x_{m+1}, \dots, x_d \}$. Then
\begin{align}\label{90}
&\Phi_{p;a_1,a_2} ([v_0,e_1,\dots,e_d]) (x) \nonumber\\
&=a_2 \max \{\alpha_1^p, \beta_1 ^p \} +(a_2-a_1)(-1)^{m+1} \max \{\alpha_1^p ,\beta_2^p\} +(a_2-a_1)(-1)^m \alpha_1^p, \nonumber\\
&\Phi_{p;b_1,b_2} (-[v_0,e_1,\dots,e_d]) (x) \nonumber\\
&=b_2 \max \{-\alpha_2^p, -\beta_2 ^p \} +(b_2-b_1)(-1)^{m+1} \max \{-\alpha_2^p ,-\beta_1^p\} + (b_2-b_1)(-1)^m (-\alpha_2^p).
\end{align}
Especially, for $m=0$ and $v_0=o$,
\begin{align}\label{84}
\Phi_{p;a_1,a_2} (T^d) (x) &= a_2 \max \{\beta_1^p ,0\} - (a_2-a_1) \max \{\beta_2^p,0\}, \nonumber \\
\Phi_{p;b_1,b_2} (-T^d) (x) &= b_2 \max \{-\beta_2^p,0\}- (b_2-b_1) \max \{-\beta_1^p,0\}.
\end{align}
Moreover,
\begin{align}\label{85}
\Phi_{p;a_1,a_2} (T^d) (e_1) + \Phi_{p;b_1,b_2} (-T^d) (e_1) = a_2, \nonumber \\
\Phi_{p;a_1,a_2} (T^d) (-e_1) + \Phi_{p;b_1,b_2} (-T^d) (-e_1) = b_2
\end{align}
for $d \geq 2$, and
\begin{align}\label{86}
\Phi_{p;a_1,a_2} (T^1) (e_1) + \Phi_{p;b_1,b_2} (-T^1) (e_1) = a_1, \nonumber \\
\Phi_{p;a_1,a_2} (T^1) (-e_1) + \Phi_{p;b_1,b_2} (-T^1) (-e_1) = -b_1
\end{align}
for $d=1$.
\end{pro}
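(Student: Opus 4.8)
I would evaluate $\Phi_{p;a_1,a_2}(P)(x)$, where $P:=[v_0,e_1,\dots,e_d]$, directly from its definition; the only non-trivial ingredient is the alternating sum over the faces of $P$ through the origin. Because $o\in\operatorname{relint}[v_0,e_1,\dots,e_m]$, the points $v_0,e_1,\dots,e_d$ are affinely independent (so $P$ is a $d$-simplex) and the barycentric coordinates of $o$ in $P$ are positive exactly on the vertices $v_0,e_1,\dots,e_m$. Hence a face of $P$ contains $o$ if and only if its vertex set contains $\{v_0,e_1,\dots,e_m\}$, i.e.
\[
\mathcal{F}_{j,o}(P)=\bigl\{\,F_I:=[v_0,e_1,\dots,e_m,\ \{e_i:i\in I\}]\ :\ I\subseteq\{m+1,\dots,d\},\ |I|=j-m\,\bigr\},
\]
which is empty for $j<m$. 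Setting $\alpha_1=\max\{v_0\cdot x,x_1,\dots,x_m\}\ (\ge 0$, since $o\in[v_0,e_1,\dots,e_m]$), we get $h_{F_I}(x)=\max(\{\alpha_1\}\cup\{x_i:i\in I\})$; as $t\mapsto t^p$ is increasing under the convention $t^p=\operatorname{sgn}(t)|t|^p$, this gives $h_{F_I}^p(x)=\max(\{\alpha_1^p\}\cup\{x_i^p:i\in I\})$, and similarly $h_P^p(x)=\max\{\alpha_1^p,\beta_1^p\}$.

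The heart of the argument is the identity
\[
\sum_{k=0}^{N}(-1)^k\sum_{\substack{I\subseteq\{1,\dots,N\}\\ |I|=k}}\max\bigl(\{c\}\cup\{u_i:i\in I\}\bigr)=c-\max\{c,\min_{1\le i\le N}u_i\}
\]
for reals $c,u_1,\dots,u_N$ ($N\ge 1$), which I would apply with $N=d-m$, $c=\alpha_1^p$ and $\{u_i\}=\{x_{m+1}^p,\dots,x_d^p\}$ (so $\min_i u_i=\beta_2^p$, $\max_i u_i=\beta_1^p$). It is proved by induction on $N$: since $\max(\{c\}\cup\{u_i:i\in I\})=c+\max(\{0\}\cup\{u_i-c:i\in I\})$, the $c$-part cancels by $\sum_k(-1)^k\binom{N}{k}=0$; then choosing the index $N$ with $u_N=\min_i u_i$ and splitting according to whether $N\in I$ makes the terms with $N\in I$ collapse against the $(N-1)$-variable sum, leaving $c-\max\{c,u_N\}$. (Alternatively one uses $\max(\{0\}\cup\{v_i:i\in I\})=\int_0^\infty\mathbf{1}[\exists\,i\in I:v_i>t]\,dt$ together with $\sum_I(-1)^{|I|}\mathbf{1}[\exists\,i\in I:v_i>t]=-\mathbf{1}[t<\min_i v_i]$.) Inserting this into the definition of $\Phi_{p;a_1,a_2}$ — pulling $(-1)^m$ out of the face sum since $j=m+k$, and peeling off the top term $k=N$ (present in the identity but absent from the face sum, which stops at $j=d-1$) — one arrives at
\[
\Phi_{p;a_1,a_2}(P)(x)=\varepsilon_d\max\{\alpha_1^p,\beta_1^p\}+(a_2-a_1)\Bigl[(-1)^m\alpha_1^p-(-1)^m\max\{\alpha_1^p,\beta_2^p\}-(-1)^d\max\{\alpha_1^p,\beta_1^p\}\Bigr],
\]
with $\varepsilon_d=a_1$ for $d$ odd and $\varepsilon_d=2a_2-a_1$ for $d$ even. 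Since $\varepsilon_d-(a_2-a_1)(-1)^d=a_2$ in both parities, the coefficient of $\max\{\alpha_1^p,\beta_1^p\}$ collapses to $a_2$, which is exactly the first line of (\ref{90}).

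For $\Phi_{p;b_1,b_2}(-P)(x)$ I would not repeat the computation but invoke the $\gln$ covariance from Lemma \ref{lem5.3}: $\Phi_{p;b_1,b_2}(-P)(x)=\Phi_{p;b_1,b_2}(P)(-x)$, and evaluating the formula just proved (with $a_i$ replaced by $b_i$) at $-x$ replaces $\alpha_1,\beta_1$ by $-\alpha_2,-\beta_2$, replaces $\alpha_2,\beta_2$ by $-\alpha_1,-\beta_1$, and replaces each $t^p$ by $-t^p$; this yields precisely the second line of (\ref{90}). The remaining claims are specializations: (\ref{84}) is the case $m=0$, $v_0=o$ (so $\alpha_1=\alpha_2=0$, $\alpha_1^p=0$, $\beta_1=\max\{x_1,\dots,x_d\}$, $\beta_2=\min\{x_1,\dots,x_d\}$), and (\ref{85}), (\ref{86}) follow from (\ref{84}) by substituting $x=\pm e_1$ and reading off the corresponding $\beta_1,\beta_2$.

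The step I expect to be the main obstacle is getting the alternating-sum identity and its incorporation exactly right: keeping the ranges straight (the identity's top term $k=N$ must be subtracted because the face sum stops at $j=\dim P-1$), the bookkeeping with the sign convention $t^p=\operatorname{sgn}(t)|t|^p$, and the two parities of $\varepsilon_d$. The combinatorial description of $\mathcal{F}_{j,o}(P)$, the support-function evaluations, the reflection step, and the corollaries should all be routine.
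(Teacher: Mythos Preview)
Your proposal is correct and follows the same overall structure as the paper's proof: identify $\mathcal{F}_{j,o}(P)$ as the $j$-faces containing $\{v_0,e_1,\dots,e_m\}$, evaluate the alternating face sum, use the parity of $d$ to absorb the leading coefficient into a single $a_2$, and then obtain the second line of (\ref{90}) from the first via $\Phi_{p;b_1,b_2}(-P)(x)=\Phi_{p;b_1,b_2}(P)(-x)$.

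The one genuine difference is how the alternating face sum is collapsed. The paper first uses the permutation invariance of $[v_0,e_1,\dots,e_d]$ in the last $d-m$ vertices to assume $x_{m+1}\ge\cdots\ge x_d$, groups the $j$-faces by which coordinate realizes the maximum, and then reduces via the explicit binomial identities
\[
\sum_{j=m+1}^{d-1}(-1)^j\binom{d-m-1}{j-m-1}=(-1)^{d-1},\qquad
\sum_{j=m+1}^{d-i+m+1}(-1)^j\binom{d-i}{j-m-1}=0\quad(m+2\le i\le d-1).
\]
You instead prove the coordinate-free identity
\[
\sum_{k=0}^{N}(-1)^k\!\!\sum_{|I|=k}\max\bigl(\{c\}\cup\{u_i:i\in I\}\bigr)=c-\max\{c,\min_i u_i\}
\]
by induction (or by the integral representation), and then peel off the top term $k=N$ to match the range $j\le d-1$. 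Your route avoids the preliminary ordering and packages the binomial cancellations into a single inclusion--exclusion statement; the paper's route is more elementary and makes the role of the extremal coordinates $x_{m+1}$ and $x_d$ visible immediately. Either way the computation and the remaining specializations (\ref{84})--(\ref{86}) are identical.
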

\begin{proof}
We will use the following basic equalities for binomial coefficients.
\begin{align}\label{bc1}
\sum_{m+1 \leq j \leq d-1} (-1)^j \left(\begin{array}{c} d-m-1 \\ j-m-1 \end{array} \right)  = (-1)^{d-1} ,
\end{align}
\begin{align}\label{bc2}
\sum_{m+1 \leq j \leq d-i+m+1} (-1)^j \left(\begin{array}{c} d-i \\ j-m-1 \end{array} \right) = 0, ~~m+2 \leq i \leq d-1.
\end{align}

Since $[v_0,e_1,\dots,e_d]$ is invariant under permutations of $\{e_{m+1},\dots,e_d \}$ and $\Phi_{p;a_1,a_2}$ is $\gln$ covariant, we can assume w.l.o.g. that $x_{m+1} \geq \dots \geq x_d$.
For $j < m$, $\mathcal{F}_{j,o}([v_0,e_1,\dots,e_d]) = \emptyset$. For $j =m$, $\mathcal{F}_{j,o}([v_0,e_1,\dots,e_d]) = \{ [v_0,e_1,\dots,e_m] \}$.
For $m+1 \leq j \leq d-1$,
\begin{align*}
\mathcal{F}_{j,o}([v_0,e_1,\dots,e_d]) = \big\{ [v_0,e_1,\dots,e_m,e_{\sigma_{m+1}},\dots,e_{\sigma_j}]: \{\sigma_{m+1},\dots,\sigma_j \} \subset \{m+1,\dots,d\} \big\},
\end{align*}
and
\begin{align*}
\sum_{F \in \mathcal{F}_{j,o}([v_0,e_1,\dots,e_d])} h_{F}^p (x) &= \left( \left(\begin{array}{c} d-m-1 \\ j-m-1 \end{array} \right) \max \{\alpha_1^p,x_{m+1}^p\} + \left(\begin{array}{c} d-m-2 \\ j-m-1 \end{array} \right) \max \{\alpha_1^p,x_{m+2}^p\} \right. \\
&\qquad \left.  \qquad + \dots + \left(\begin{array}{c} j-m-1 \\ j-m-1 \end{array} \right) \max \{\alpha_1^p,x_{d-j+m+1}^p\} \right).
\end{align*}
Hence, the definition of $\Phi_{p;a_1,a_2}$, (\ref{bc1}) and (\ref{bc2}) show that
\begin{align*}
\Phi_{p;a_1,a_2} ([v_0,e_1,\dots,e_d]) (x) &=a_2 \max \{\alpha_1^p,x_{m+1}^p\} +(a_2-a_1)(-1)^{m+1} \max \{\alpha_1^p,x_{d}^p\} \\
&\qquad \qquad +(a_2-a_1)(-1)^m \alpha_1^p.
\end{align*}
Then the second equation of (\ref{90}) follows from
$$\Phi_{p;b_1,b_2} (-[v_0,e_1,\dots,e_d]) (x) = \Phi_{p;b_1,b_2} ([v_0,e_1,\dots,e_d]) (-x).$$

For $m=0$ and $v_0=o$, we have $\alpha_1=\alpha_2=0$. Hence (\ref{84}) holds true.

(\ref{85}) and (\ref{86}) follow directly from (\ref{84}).
\end{proof}

Second, we give a lemma on lower dimensional polytopes.

\begin{lem}\label{lem5.8}
Let $n \geq 3$. If $\Phi : \mathcal{P}_o ^n \to C_p(\mathbb{R}^n)$ is an $\sln$ covariant valuation, then there exist constants $a_1,a_2,b_1,b_2 \in \mathbb{R}$ such that
\begin{align*}
\Phi P = \Phi_{p;a_1,a_2}P + \Phi_{p;b_1,b_2}(-P)
\end{align*}
for every $P \in \mathcal {P}_o ^n$ with $\dim P \leq n-1$.
\end{lem}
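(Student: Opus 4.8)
The plan is to fix the four constants, reduce the statement to the simplices $T^1,\dots,T^{n-1}$, and then verify it on those by induction on the dimension. By Lemma~\ref{lem5.3} the map $Q\mapsto\Phi_{p;a_1,a_2}Q+\Phi_{p;b_1,b_2}(-Q)$ is an $\sln$ covariant valuation, so once $a_1,a_2,b_1,b_2$ are chosen the difference $\Psi:=\Phi-\big(\Phi_{p;a_1,a_2}(\cdot)+\Phi_{p;b_1,b_2}(-\cdot)\big)$ is again an $\sln$ covariant valuation, with $\Psi\{o\}=0$ by $\sln$ covariance. Since any two $d$-dimensional simplices of $\mathcal{T}_o^n$ are $\sln$ equivalent when $d<n$ (send the vertices of one to those of the other by a linear map and correct its determinant using a further coordinate), it suffices to prove $\Psi T^d=0$ for $1\le d\le n-1$; this yields $\Psi=0$ on every simplex of $\mathcal{T}_o^n$ of dimension $\le n-1$. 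The extension to arbitrary $P\in\mathcal{P}_o^n$ with $\dim P\le n-1$ is then an induction over the filtration $\mathcal{P}_1\subset\mathcal{P}_2\subset\cdots$ of Section~\ref{s2}, exactly as in the proof of Theorem~\ref{thm1.1}: if $P=P_1\cup P_2\in\mathcal{P}_i$ with $P_1,P_2\in\mathcal{P}_{i-1}$ of disjoint relative interiors and $P\ne P_1,P_2$, then $P_1,P_2$ and (by (\ref{303})) $P_1\cap P_2$ all lie in $\mathcal{P}_{i-1}$ and have dimension $\le n-1$, so $\Psi P=\Psi P_1+\Psi P_2-\Psi(P_1\cap P_2)=0$.

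It remains to treat $T^d$, $1\le d\le n-1$, by induction on $d$. Two observations make the low-dimensional situation easier than that of Lemma~\ref{lem4.2}: because $d<n$ there is a coordinate direction outside $\lin T^d$, so any permutation of $e_1,\dots,e_d$ extends to an element of $\sln$ fixing $T^d$ (append a transposition of two other coordinates, or use $e_n\mapsto-e_n$ when $d=n-1$), whence by Lemma~\ref{lem4.1} the function $x\mapsto\Phi(T^d)(x)$ depends only on, and is symmetric in, $x_1,\dots,x_d$; and a hyperplane section of $T^d$ through the origin is again an unscaled $\sln$ image of $T^d$, so no Cauchy-type functional equation intervenes. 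For $d=1$, Lemma~\ref{lem4.1} and $p$-homogeneity give $\Phi(T^1)(x)=\Phi(T^1)(x_1e_1)=a_1\max\{x_1,0\}^p+b_1\max\{-x_1,0\}^p$ with $a_1:=\Phi(T^1)(e_1)$, $b_1:=\Phi(T^1)(-e_1)$, which by Proposition~\ref{pro5.1} equals $\Phi_{p;a_1,a_2}(T^1)+\Phi_{p;b_1,b_2}(-T^1)$ for any $a_2,b_2$; we set $a_2:=\Phi(T^2)(e_1)$, $b_2:=\Phi(T^2)(-e_1)$, which is legitimate since $n\ge3$.

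For the step from $d-1$ to $d$ ($2\le d\le n-1$), cut $T^d$ by the hyperplane $H_\lambda$ through the origin, with $\phi_1,\phi_2$ as in Section~\ref{s2} (applied along whichever pair of coordinates is convenient): $T^d=(T^d\cap H_\lambda^-)\cup(T^d\cap H_\lambda^+)$ and $T^d\cap H_\lambda=\phi_1\hat T^{d-1}$, so the valuation identity and $\sln$ covariance give, for $0<\lambda<1$,
\begin{align*}
\Phi(T^d)(x)+\Phi(\hat T^{d-1})(\phi_1^t x)=\Phi(T^d)(\phi_1^t x)+\Phi(T^d)(\phi_2^t x).
\end{align*}
As $\hat T^{d-1}$ is $\sln$ equivalent to $T^{d-1}$, the induction hypothesis determines $\Phi(\hat T^{d-1})$, and since $\Phi_{p;a_1,a_2}(\cdot)+\Phi_{p;b_1,b_2}(-\cdot)$ obeys the same identity (Lemma~\ref{lem5.3}), the difference $P_d:=\Phi(T^d)-\Phi_{p;a_1,a_2}(T^d)-\Phi_{p;b_1,b_2}(-T^d)$ satisfies $P_d(x)=P_d(\phi_1^t x)+P_d(\phi_2^t x)$, is $p$-homogeneous, continuous, and symmetric in $x_1,\dots,x_d$. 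One then shows $P_d\equiv0$ by induction on the number $m$ of nonzero coordinates of $x$, in the spirit of the proof of Theorem~\ref{thm1.3} but now producing exact equalities since nothing is discarded: inserting the vectors $e_1$, $e_1+e_2$, $e_1+e_3,\dots$ into the identity and letting $\lambda\to0^+$ first forces $\Phi(T^d)(\pm e_1)$ to be independent of $d$ for $d\ge2$, hence equal to $a_2,b_2$ (this settles $m\le1$); then, writing $w=\lambda x_1+(1-\lambda)x_2$ and combining $p$-homogeneity with the freedom to take $w=0$ on a pair of oppositely signed coordinates, the value of $P_d$ at a vector with $m$ nonzero coordinates is reduced to values at vectors with fewer nonzero coordinates.

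The step I expect to be the main obstacle is the case where $x$ has full support ($m=d$) and $x_1,\dots,x_d$ all have the same sign: then no parameter $w$ can be taken to $0$, so the reduction in $m$ is unavailable, and one must instead argue as in the $\sln$ covariant case — singling out the largest and smallest coordinates, using that $P_d$ vanishes once two coordinates coincide, and invoking continuity — to conclude $P_d(x)=0$. The other delicate point, also absent from Lemma~\ref{lem4.2}, is the cross-dimensional consistency $\Phi(T^d)(\pm e_1)=\Phi(T^2)(\pm e_1)$ for $2\le d\le n-1$: with no sublinearity at our disposal it has to be extracted from the cut identity, $p$-homogeneity and continuity alone, and it is precisely this fact that singles out the two-parameter families $\Phi_{p;a_1,a_2}$ as the correct building blocks.
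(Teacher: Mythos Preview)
Your strategy---reduce to $T^d$ by $\sln$ covariance and the valuation property, then run a double induction on $d$ and on the number $m$ of nonzero coordinates via the cut identity---is exactly the paper's. The obstacle you anticipate in the same-sign case is not real, and the vague diagonal-vanishing workaround you propose is unnecessary: the paper (like the proof of Theorem~\ref{thm1.3} you yourself invoke) handles that case by plugging a \emph{shorter} vector into the identity rather than the full one. For $|x_2|<|x_1|$ with the same sign, insert $x'=x_1e_1+x_3e_3+\dots+x_me_m$ and take $\lambda=x_2/x_1\in(0,1)$; then $\phi_2^tx'=x_1e_1+x_2e_2+\dots+x_me_m$ is the full vector while $x'$ and $\phi_1^tx'=x_2e_1+x_3e_3+\dots+x_me_m$ each have only $m-1$ nonzero coordinates, so $P_d(x)=P_d(x')-P_d(\phi_1^tx')=0$ by the induction hypothesis on $m$. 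A symmetric substitution (drop the $e_1$-component and take $1-\lambda=x_1/x_2$) covers $|x_1|<|x_2|$, and your opposite-sign argument covers the rest; full support $m=d$ plays no special role.

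For the cross-dimensional consistency $\Phi(T^d)(\pm e_1)=\Phi(T^2)(\pm e_1)$ for $2\le d\le n-1$, the paper's route is also more direct than a limit $\lambda\to0^+$: take $x=e_j$ with $3\le j\le d$ in the cut identity, so that $\phi_1^tx=\phi_2^tx=x$ and the identity collapses to $\Phi(\hat T^{d-1})(e_j)=\Phi(T^d)(e_j)$, i.e.\ $a_{d-1}=a_d$. Finally, the paper extends from simplices to general $P$ in one line via the inclusion-exclusion principle rather than the $\mathcal P_i$ filtration; either works.
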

\begin{proof}
By the $\sln$ covariance of $\Phi$, Lemma \ref{lem4.1} and the inclusion-exclusion principle, we only need to show that
\begin{align}\label{87}
\Phi T^d (x) = \Phi_{p;a_1,a_2}T^d (x) + \Phi_{p;b_1,b_2}(-T^d) (x), ~~x \in \mathbb{R}^d
\end{align}
for $d \leq n-1$.

Set $a_d = \Phi(T^{d})(e_1)$ and $b_d = \Phi(T^{d})(-e_1)$ for $d \leq n-1$.

For $0 < \lambda <1$, define $H_\lambda$, $\phi _1,\phi _2$ as in Section \ref{s2}. For $d \leq n-1$, since $\Phi$ is a valuation, we get that
\begin{align*}
\Phi(T^{d}) + \Phi(T^d \cap H_\lambda) = \Phi(T^d \cap H_\lambda ^-) + \Phi(T^d \cap H_\lambda ^+).
\end{align*}
Also since $\Phi$ is $\sln$ covariant,
\begin{align}\label{55a}
\Phi(T^{d}) (x) + \Phi(\hat{T}^{d-1}) (\phi _1 ^t x)
= \Phi(T^d) (\phi _1 ^t x) + \Phi(T^d) (\phi _2 ^t x),
\end{align}
where $x=(x_1, \dots, x_n)^t$, $\phi_1 ^t x = (\lambda x_1 + (1-\lambda)x_2, x_2, x_3,\dots,x_{n-1}, \frac{1}{\lambda} x_n)^t$ and $\phi_2 ^t x = (x_1, \lambda x_1 + (1-\lambda)x_2,x_3,\dots,x_{n-1},\frac{1}{\lambda}x_n)^t$.

For $3 \leq d \leq n-1$, taking $x=e_d$ in (\ref{55a}), by Lemma \ref{lem4.1} and the $\sln$ covariance of $\Phi$, we obtain that $a_{d} = a_{d-1}$.
Thus, we have
\begin{align}\label{57a}
a_{n-1} = \dots = a_{2}.
\end{align}
Similarly, taking $x= -e_d$ in (\ref{55a}), we get
\begin{align}\label{64a}
b_{n-1} = \dots = b_{2}.
\end{align}

Now we will prove the desired result by induction on the dimension $d$. Proposition \ref{pro5.1} and the $p$-homogeneity of $\Phi T^d$, $\Phi_{p;a_1,a_2}T^d$ and $\Phi_{p;b_1,b_2}(-T^d)$ show that (\ref{87}) holds true for $d=1$. Assume that (\ref{87}) holds true for $d-1$. Then we will show that (\ref{87}) holds true for $d$. We will prove this by induction on the number $m$ of coordinates of $x$ not equal to zero. By the $\sln$ covariance of $\Phi$, we can assume w.l.o.g. that $x=x_1 e_1+\dots+x_m e_m$, $x_1,\dots,x_m \neq 0$.

Proposition \ref{pro5.1}, relations (\ref{57a}) and (\ref{64a}) show that (\ref{87}) holds true for $m=1$. Assume that (\ref{87}) holds true for $m-1$.

For $x_1 > x_2 > 0$ or $0 > x_2 >x_1$, taking $x=x_1 e_1 + x_3 e_3 + \dots +x_m e_m$, $\lambda = \frac{x_2}{x_1}$ in (\ref{55a}), we get
\begin{align}\label{38a}
&\Phi(T^d)(x_1e_1 + x_3 e_3 + \dots +x_m e_m) + \Phi(\hat{T}^{d-1}) (x_2 e_1 + x_3 e_3 + \dots +x_m e_m) \nonumber \\
&= \Phi(T^d) (x_2 e_1 + x_3 e_3 + \dots +x_m e_m) + \Phi(T^d) (x_1 e_1 + \dots +x_m e_m)\}.
\end{align}
For $x_2 > x_1 > 0$ or $0 > x_1 >x_2$,
taking $x=x_2 e_2 + x_3 e_3 + \dots +x_m e_m$, $1-\lambda = \frac{x_1}{x_2}$, in (\ref{55a}), we get
\begin{align}\label{40a}
&\Phi(T^d) (x_2e_2 + x_3 e_3 + \dots +x_m e_m) + \Phi(\hat{T}^{d-1}) (x_1 e_1 + \dots +x_m e_m)  \nonumber \\
&= \Phi(T^d) (x_1 e_1 + \dots +x_m e_m) + \Phi(T^d) (x_1 e_2 +x_3 e_3 + \dots +x_m e_m).
\end{align}
For $x_1 >0 > x_2$ or $x_2 > 0 > x_1$, taking $0 < \lambda = \frac{x_2}{x_2 - x_1} <1$ and $x= x_1 e_1 + \dots +x_m e_m$ in (\ref{55a}), we get
\begin{align}\label{23a}
&\Phi(T^d)(x_1 e_1 + \dots +x_m e_m) + \Phi(\hat{T}^{d-1}) (x_2 e_2 + x_3 e_3 + \dots +x_m e_m)  \nonumber \\
&= \Phi(T^d) (x_2 e_2 + x_3 e_3 + \dots +x_m e_m) + \Phi(T^d)(x_1 e_1 + x_3 e_3 + \dots +x_m e_m).
\end{align}
Combined with the $\sln$ covariance of $\Phi$, (\ref{38a}), (\ref{40a}) and (\ref{23a}) show that $\Phi(T^d) (x_1 e_1 + \dots +x_m e_m)$ is uniquely determined by $\Phi(T^d) (y_1 e_1 + \dots + y_{m-1} e_{m-1})$, $y_1, \dots,y_{m-1} \neq 0$, and $\Phi(T^{d-1})$. Since $\Phi_{p;a_1,a_2}(T^d) + \Phi_{p;b_1,b_2}(-T^d)$ also satisfies the equations (\ref{38a}), (\ref{40a}) and (\ref{23a}), we get that (\ref{87}) holds true for $m$. The proof is complete.
\end{proof}

Finally, let $\Phi' P = \Phi P - \Phi_{p;a_1,a_2}P - \Phi_{p;b_1,b_2}(-P)$, $P \in \mathcal {P}_o ^n$. Hence $\Phi'$ is a simple $\sln$ covariant valuation. Here simple means that the valuation vanishes on lower dimensional bodies. Combined with the following classification of simple valuations by Haberl \cite{Hab12b} and Parapatits \cite{Par14b}, we finish the proof of Lemma \ref{lem5.4}.

\begin{lem}[Haberl \cite{Hab12b} and Parapatits \cite{Par14b}]\label{lem5.7}
Let $n \geq 3$ and $\Phi : \mathcal{P}_o ^n \to C_p(\mathbb{R}^n)$ be a simple $\sln$ covariant valuation. Assume further that, for every $y \in \mathbb{R}^n$, the function $s \mapsto \Phi(sT^n)(y)$ is bounded from below on some non-empty open interval $I_y \subset (0,+\infty)$. Then there exist constants $c_1,c_2 \in \mathbb{R}$ such that
\begin{align*}
\Phi P = c_1 h_{M_p^+ P} ^p + c_2h_{M_p^- P}^p
\end{align*}
for every $P \in \mathcal {P}_o ^n$.
\end{lem}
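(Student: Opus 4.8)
The plan is to reduce the whole classification to the single function $\Phi(T^n)\in C_p(\mathbb{R}^n)$ and then to identify it with a linear combination of $h_{M_p^+T^n}^p$ and $h_{M_p^-T^n}^p$. The "if" direction is routine: for arbitrary $c_1,c_2\in\mathbb{R}$ the map $P\mapsto c_1h_{M_p^+P}^p+c_2h_{M_p^-P}^p$ takes values in $C_p(\mathbb{R}^n)$, is an $\sln$ covariant valuation, is simple because $\int_P(\max\{\pm x\cdot y,0\})^p\,dy=0$ whenever $\dim P<n$, and satisfies the boundedness hypothesis since $h_{M_p^\pm(sT^n)}^p(y)=s^{n+p}h_{M_p^\pm T^n}^p(y)\ge0$. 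For the converse, I would first use that $\Phi$ is simple: by the inclusion--exclusion principle $\Phi$ is determined on all of $\mathcal{P}_o^n$ by its values on the $n$-dimensional simplices of $\mathcal{T}_o^n$, because every $P$ dissects into such simplices with pairwise lower-dimensional intersections, whose $\Phi$-values vanish. Writing such a simplex as $\psi T^n$ with $\psi\in\gln$ (one may take $\det\psi>0$ after composing with a coordinate transposition, which fixes $T^n$ setwise) and splitting off the scalar matrix $(\det\psi)^{1/n}I$, $\sln$ covariance reduces everything to the one-parameter family $F_s:=\Phi(sT^n)$, $s>0$.

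Next I would pin down the $s$-dependence via a Cauchy equation. Cutting $sT^n$ by the hyperplane $H_\lambda$ as in Section~\ref{s2} gives $sT^n\cap H_\lambda^-=\phi_3\lambda^{1/n}sT^n$, $sT^n\cap H_\lambda^+=\phi_4(1-\lambda)^{1/n}sT^n$ and $sT^n\cap H_\lambda=\phi_1\lambda^{1/n}s\hat T^{n-1}$; since $\Phi$ is a simple valuation the last term drops and, arguing exactly as for~(\ref{30}),
\begin{align*}
\Phi(sT^n)(x)=\Phi(\lambda^{1/n}sT^n)(\phi_3^tx)+\Phi((1-\lambda)^{1/n}sT^n)(\phi_4^tx),\qquad x\in\mathbb{R}^n,\ 0<\lambda<1.
\end{align*}
Evaluating at $x=e_n$, where $\phi_3^te_n=\lambda^{-1/n}e_n$ and $\phi_4^te_n=(1-\lambda)^{-1/n}e_n$, and using the $p$-homogeneity of $F_s$, the change of variables $t=s^n$, $G(t):=t^{-p/n}\Phi(t^{1/n}T^n)(e_n)$ turns this into the additive Cauchy equation $G(t_1+t_2)=G(t_1)+G(t_2)$. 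The hypothesis that $s\mapsto\Phi(sT^n)(e_n)$ is bounded from below on an interval transfers to $G$, so the classical theorem on the Cauchy functional equation with one-sided local boundedness forces $G$ linear, i.e. $\Phi(sT^n)(e_n)=c\,s^{n+p}$; since $n\ge3$, an even coordinate permutation fixing $T^n$ carries $e_n$ to $e_1$ (and likewise for $-e_1$), so $\Phi(sT^n)(\pm e_1)=c_\pm s^{n+p}$.

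With this in hand I would run a single induction on the number $m$ of nonzero coordinates of $x$---reduced by covariance to $x=x_1e_1+\dots+x_me_m$ with $x_j\ne0$---proving \emph{simultaneously}, for all $s>0$, that $\Phi(sT^n)(x)=s^{n+p}\Phi(T^n)(x)$ and that $\Phi(T^n)(x)=(c_1h_{M_p^+T^n}^p+c_2h_{M_p^-T^n}^p)(x)$, where $c_1:=\Phi(T^n)(e_1)/h_{M_p^+T^n}^p(e_1)$ and $c_2:=\Phi(T^n)(-e_1)/h_{M_p^-T^n}^p(-e_1)$ (the denominators are positive). The case $m=1$ is the previous paragraph together with $p$-homogeneity. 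For the step to $m$, I would feed a suitable point into the displayed cutting identity (or into the variant obtained by solving it for one summand), choosing $\lambda$ according to the signs and magnitudes of two coordinates---e.g. $\lambda=x_2/(x_2-x_1)$ when $x_1>0>x_2$ makes $\lambda x_1+(1-\lambda)x_2=0$, and $\lambda=x_2/x_1$ in the equal-sign case makes one summand the target value while the other has one fewer nonzero coordinate---so that the identity expresses $\Phi(sT^n)(x)$ through $\Phi$-values at scales $\lambda^{1/n}s,(1-\lambda)^{1/n}s$ evaluated at points with at most $m-1$ nonzero coordinates, plus a lower-dimensional term vanishing by simplicity; the inductive hypothesis and the fact that $h_{M_p^+T^n}^p$ and $h_{M_p^-T^n}^p$ satisfy the \emph{identical} identities then close it. This is the mechanism already used in the proofs of Theorems~\ref{thm1.1} and~\ref{thm1.3} and of Lemma~\ref{lem5.8}, and Proposition~\ref{pro5.1} records the resulting face combinatorics. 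Finally, scalar splitting and $\sln$ covariance promote the equality to all $n$-simplices in $\mathcal{T}_o^n$, and the inclusion--exclusion principle extends it to $\mathcal{P}_o^n$.

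The hard part is the last induction: the cutting identities never evaluate $\Phi(T^n)$ directly, only relating it to "simpler" arguments, so one must choose the right hyperplane for each sign-and-magnitude pattern and keep careful track of the faces of $T^n$ through the origin; this is precisely the content encoded in Proposition~\ref{pro5.1}. A secondary technical point is to transport the one-sided boundedness hypothesis correctly from $s\mapsto\Phi(sT^n)(e_n)$ to $G$ before the classical Cauchy result can be invoked.
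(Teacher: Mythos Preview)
The paper does not give its own proof of Lemma~\ref{lem5.7}: the result is quoted from Haberl~\cite{Hab12b} and Parapatits~\cite{Par14b} and used as a black box to finish Lemma~\ref{lem5.4}. So there is no in-paper argument to compare against directly.

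That said, your sketch is correct and is precisely the method of those references, and it mirrors what the paper itself does in the neighbouring results. The reduction to $F_s=\Phi(sT^n)$ via simplicity, inclusion--exclusion and $\sln$ covariance is right (your handling of $\det\psi<0$ by a transposition fixing $T^n$ is fine). The Cauchy step is correctly set up: evaluating the cutting identity at $e_n$ and substituting $G(t)=t^{-p/n}\Phi(t^{1/n}T^n)(e_n)$ does give $G(t_1+t_2)=G(t_1)+G(t_2)$, and the one-sided local bound transfers to $G$ since $t^{-p/n}$ is bounded on any compact interval in $(0,\infty)$; the classical Cauchy theorem then yields $\Phi(sT^n)(\pm e_1)=c_\pm s^{n+p}$. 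Your simultaneous induction on the number of nonzero coordinates, using the additive cutting identity (which, unlike the $L_\infty$ case, can be \emph{solved} for the target value) and the fact that $h_{M_p^\pm(\cdot)}^p$ satisfy the identical relations, is the standard closing argument---it is exactly the mechanism of Lemma~\ref{lem5.8}, transplanted to $d=n$ with the $\hat T^{n-1}$ term dropped by simplicity.

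One small correction: your pointer to Proposition~\ref{pro5.1} is off-target. That proposition computes the specific family $\Phi_{p;a_1,a_2}$ on simplices; it does not ``record the face combinatorics'' of the cutting induction you need here. The relevant bookkeeping is the action of $\phi_3^t,\phi_4^t$ on coordinates (as displayed in the proof of Lemma~\ref{lem4.2} and used in Lemma~\ref{lem5.8}), and the case split on signs/magnitudes of $x_1,x_2$ handled by the analogues of (\ref{38a}), (\ref{40a}), (\ref{23a}). This is a labeling issue, not a gap.
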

\end{proof}

\begin{proof}[\textbf{Proof of Theorem \ref{thm5.1}.}]
For $a_1,b_1,c_1,c_2 \geq 0$, clearly $P \mapsto c_1 h_{M_p^+ P} ^p + c_2h_{M_p^- P}^p + a_1 h_{P}^p + b_1 h_{-P}^p$ is a valuation satisfying all conditions. Hence we only need to show the necessity.

Let $\Phi$ be a valuation satisfying all the conditions of Theorem \ref{thm5.1}. Since $\Phi$ also satisfies all the conditions of Lemma \ref{lem5.4}, there exist constants $a_1,a_2,b_1,b_2,c_1,c_2 \in \mathbb{R}$ such that
\begin{align}\label{105}
\Phi P = c_1 h_{M_p^+ P} ^p + c_2h_{M_p^- P}^p + \Phi_{p;a_1,a_2}P + \Phi_{p;b_1,b_2}(-P)
\end{align}
for every $P \in \mathcal {P}_o ^n$. The main aim is to show that $a_1=a_2$ and $b_1=b_2$.

\begin{lem}\label{lem5.1}
Let $\Phi$ satisfies (\ref{105}).
Assume that $\Phi P$ is non-negative and $(\Phi P)^{1/p}$ is a sublinear function for all $P \in \mathcal {P}_o ^n$. Then $a_1,a_2,b_1,b_2,c_1,c_2 \geq 0$.
Moreover, if $n \geq 3$, $p > 1$ or $n \geq 4$, $p=1$, then
$$a_1 = a_2, ~ b_1 = b_2;$$
if $p=1$ and $n=3$, then
$$a_1 \leq a_2, ~~b_1 \leq b_2, ~~a_2 - a_1 \leq b_2, ~~b_2 - b_1 \leq a_2.$$
\end{lem}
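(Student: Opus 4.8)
The plan is to derive the three groups of conclusions in turn: non‑negativity of $a_1,a_2,b_1,b_2,c_1,c_2$; then the identities $a_1=a_2$, $b_1=b_2$ in the stated range; and finally, when $p=1$, $n=3$, the four inequalities. For the signs I would evaluate $\Phi$ on the simplices $sT^n$ at $\pm e_1$. Since $h_{M_p^+(sT^n)}^p=s^{n+p}h_{M_p^+T^n}^p$, $\Phi_{p;a_1,a_2}(sT^n)=s^p\Phi_{p;a_1,a_2}(T^n)$, $h_{M_p^-T^n}(e_1)=0$ and $h_{M_p^+T^n}(e_1)>0$, relations (\ref{105}) and (\ref{85}) give $\Phi(sT^n)(e_1)=c_1s^{n+p}h_{M_p^+T^n}(e_1)^p+a_2s^p$; as this is $\ge0$ for all $s>0$, letting $s\to0^+$ gives $a_2\ge0$ and letting $s\to\infty$ gives $c_1\ge0$, and the symmetric computation at $-e_1$ gives $b_2,c_2\ge0$. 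For $a_1,b_1$, Lemma \ref{lem5.8} and the definition of $\Phi_{p;a_1,a_2}$ give $\Phi T^1=a_1h_{[o,e_1]}^p+b_1h_{[o,-e_1]}^p$, so $\Phi(T^1)(e_1)=a_1\ge0$ and $\Phi(T^1)(-e_1)=b_1\ge0$.

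The engine for the remaining conclusions is the observation that for every $P\in\mathcal P_o^n$ the positively homogeneous function $\chi_P:=\bigl(\Phi_{p;a_1,a_2}P+\Phi_{p;b_1,b_2}(-P)\bigr)^{1/p}$ is sublinear. If $\dim P\le n-1$ this is immediate from Lemma \ref{lem5.8} and the hypothesis on $(\Phi P)^{1/p}$. If $\dim P=n$ (needed only for $n=3,4$), I would use that $\Phi(sP)=s^{n+p}(c_1h_{M_p^+P}^p+c_2h_{M_p^-P}^p)+s^p\bigl(\Phi_{p;a_1,a_2}P+\Phi_{p;b_1,b_2}(-P)\bigr)$, whence $\chi_P(x)=\lim_{s\to0^+}\bigl(\Phi(sP)(x)\bigr)^{1/p}/s$; each $\bigl(\Phi(sP)\bigr)^{1/p}/s$ is sublinear, and a pointwise limit of sublinear functions is sublinear.

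To extract the relations I would feed into this the simplices $T^2=[o,e_1,e_2]$, $[-e_1,e_1,e_2,e_3]$ and (for $n\ge4$) $[-e_1,e_1,e_2,e_3,e_4]$, computing $\chi_P$ from Proposition \ref{pro5.1}. For $p>1$: by (\ref{84}), on the first quadrant of $\mathbb R^2$ one has $\chi_{T^2}^p=a_2(\max\{x_1,x_2\})^p-(a_2-a_1)(\min\{x_1,x_2\})^p$, so the restriction of $\chi_{T^2}$ to the segment $x_1=c$, $0\le x_2\le c$ is $\bigl(a_2c^p-(a_2-a_1)x_2^p\bigr)^{1/p}$, which is \emph{strictly concave} in $x_2$ when $a_2>a_1$ (here $p>1$ is used); since $\chi_{T^2}$ is convex this forces $a_1\ge a_2$. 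On the other hand, comparing the slopes of $\chi_{[-e_1,e_1,e_2,e_3]}$ across the boundary of the cones $\{x_2>x_1>x_3>0\}$ and $\{x_2>x_3>x_1>0\}$ forces $a_1\le a_2$; hence $a_1=a_2$, and the mirror computation (replacing $P$ by $-P$, i.e.\ swapping $a\leftrightarrow b$) gives $b_1=b_2$. For $p=1$ and $n\ge4$ the first step is replaced by: $\chi_{[-e_1,e_1,e_2,e_3,e_4]}$ restricted to $\lin\{e_2,e_3,e_4\}$ equals $a_2\max\{x_2,x_3,x_4\}+(a_2-a_1)\min\{x_2,x_3,x_4\}$ on the first octant there, and evaluating this at $(1,1,\varepsilon)$, $(1,\varepsilon,1)$ and their midpoint forces $a_2-a_1\le0$, so again $a_1=a_2$, $b_1=b_2$. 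Finally, for $p=1$, $n=3$ one works with the now full‑dimensional simplex $[-e_1,e_1,e_2,e_3]$; subdividing $\mathbb R^3$ into the cones cut out by $\{x_i=0\}$ and $\{x_i=\pm x_j\}$, $\chi_P$ is linear on each cone, and requiring the resulting piecewise‑linear function to be convex across the cone boundaries gives precisely $a_1\le a_2$, $b_1\le b_2$, $a_2-a_1\le b_2$, $b_2-b_1\le a_2$.

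The main obstacle is the bookkeeping in the last paragraph: writing $\chi_P$ out explicitly from Proposition \ref{pro5.1} for each chosen simplex and pinning down the segment ($p>1$) or the cone boundary ($p=1$) that certifies non‑convexity. The conceptual point to keep in view is the dichotomy between $p>1$, where strict convexity of $t\mapsto t^p$ already forbids the ``complicated'' part in low dimension, and $p=1$, where that mechanism disappears, so in dimension $3$ only four linear inequalities survive and one must go to a genuinely $4$‑dimensional simplex — hence $n\ge4$ — to recover the equalities.
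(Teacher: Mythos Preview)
Your argument is correct and in several places takes a genuinely different, more elementary route than the paper. The limiting device $\chi_P=\lim_{s\to0^+}s^{-1}(\Phi(sP))^{1/p}$ is exactly what the paper uses. For $a_2\le a_1$ when $p>1$, however, the paper works with $T^3$: it realises $\chi_{T^3}$ as the support function of a body $K\subset\mathbb R^3$, pins down $K|\mathbb R^2=a_2^{1/p}T^2+_p b_2^{1/p}(-T^2)$, deduces $a_2^{1/p}e_1\in K$, and then evaluates $h_K(e_1+e_2+e_3)=a_1^{1/p}$. Your strict-concavity test on the segment $\{x_1=c,\ 0\le x_2\le c\}$ for $\chi_{T^2}$ is a cleaner substitute and uses only that $t\mapsto(a_2c^p-(a_2-a_1)t^p)^{1/p}$ cannot be both convex and strictly concave. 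For $a_1\le a_2$ the paper uses subadditivity of $\chi_{T^3}$ at the points $e_1+\lambda e_2+(1-\lambda)e_3$, $\lambda e_1+\lambda e_2$, $(1-\lambda)e_1+(1-\lambda)e_3$; your slope comparison across $\{x_1=x_3\}$ in $\chi_{[-e_1,e_1,e_2,e_3]}$ gives the same conclusion (just note the degenerate case $a_2=b_2=0$, where $\chi^p$ vanishes on that wall; there nonnegativity of $\chi^p$ on the adjacent cone forces $a_1=0$ directly). For $p=1$, $n\ge4$ the paper also uses $[-e_1,e_1,e_2,e_3,e_4]$ but with the ad hoc triple $(1,3,3,2)$, $(1,3,2,3)$, $(2,6,5,5)$; your midpoint test at $(0,1,1,\varepsilon)$ and $(0,1,\varepsilon,1)$ is neater. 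Finally, for $p=1$, $n=3$ the paper obtains the two cross inequalities from $T^2$ (via $\chi_{T^2}(e_1)\le\chi_{T^2}(e_1+e_2)+\chi_{T^2}(-e_2)$, i.e.\ $a_2\le a_1+b_2$), while your choice $[-e_1,e_1,e_2,e_3]$ also works: along $x_2=x_3=1$ one finds $\chi(t,1,1)=(2a_2-a_1)+(b_2-a_2+a_1)|t|$ for $|t|<1$, whose convexity at $t=0$ is precisely $a_2-a_1\le b_2$. Your last paragraph would benefit from naming these explicit walls rather than leaving the cone bookkeeping to the reader.
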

\begin{proof}
From the definitions,
\begin{align}\label{88}
h_{\alpha M_p^+ P} ^p = \alpha ^{n+p} h_{M_p^+ P} ^p,~~h_{\alpha M_p^- P} ^p = \alpha ^{n+p} h_{M_p^- P} ^p,
\end{align}
and
\begin{align}\label{89}
\Phi_{p;a_1,a_2} (\alpha P) = \alpha^p \Phi_{p;a_1,a_2} P,~~\Phi_{p;b_1,b_2}(-\alpha P) = \alpha^p \Phi_{p;b_1,b_2} (-P).
\end{align}
for $\alpha > 0$ and $P \in \mathcal {P}_o^n$.
Also since $h_{M_p^+ T^n}^p(e_1) >0$, $h_{M_p^- T^n}^p(e_1) = 0$, if $c_1 <0$, then $\alpha^{-p} \Phi(\alpha T^n) (e_1) \to -\infty$ when $\alpha \to \infty$. It is a contradiction since $\Phi(\alpha T^n) (e_1) \geq 0$ for any $\alpha >0$. Hence $c_1 \geq 0$. Similarly we get $c_2 \geq 0$.

Define $h(x) := \lim\limits_{\alpha \to 0^+} \alpha^{-1}(\Phi(\alpha T^3) (x))^{1/p}$, $x\in \mathbb{R}^3$.
By (\ref{88}) and (\ref{89}), we have
\begin{align}\label{h}
0 \leq h = (\Phi_{p;a_1,a_2} T^3 + \Phi_{p;b_1,b_2}(-T^3))^{1/p}.
\end{align}

Let $0 \leq \mu \leq \lambda \leq 1$. By Proposition \ref{pro5.1}, we get that
\begin{align*}
  h(e_1 + \lambda e_2 + \mu e_3) = (a_2 + \mu^p(a_1-a_2))^{1/p}.
\end{align*}
Especially,
\begin{align*}
  0 \leq h(e_1 + e_2 + e_3) = a_1^{1/p},
\end{align*}
and
\begin{align*}
  0 \leq h(e_1 + e_2) = h(e_1 + e_3) = a_2^{1/p}.
\end{align*}
On the other hand, $h$ is sublinear since $h$ is the limit of sublinear functions. Hence taking $\frac{1}{2} \leq \lambda \leq 1$, we get
\begin{align*}
  &(a_2 + (1-\lambda) ^p(a_1-a_2))^{1/p} = h(e_1 + \lambda e_2 + (1-\lambda) e_3) \\
  &\leq h(\lambda e_1 + \lambda e_2) + h((1-\lambda) e_1 + (1-\lambda) e_3) =  a_2^{1/p}.
\end{align*}
Then $a_1 \leq a_2$.

Next we will prove $a_2 \leq a_1$ for $n \geq 3$ and $p >1$.

Since $h$ is sublinear, it is also a support function of a convex body, denoted by $K \subset \mathbb{R}^3$. Let $x_1,x_2 \in \mathbb{R}$. By (\ref{h}) and Proposition \ref{pro5.1}, we get that
\begin{align*}
  h_{K|\mathbb{R}^2} (x_1 e_1 + x_2 e_2) &= h_{K} (x_1 e_1 + x_2 e_2) \\
  &= (\Phi_{p;a_1,a_2} T^3 (x_1 e_1 + x_2 e_2) + \Phi_{p;b_1,b_2}(-T^3) (x_1 e_1 + x_2 e_2))^{1/p} \\
  &= (a_2 \max \{x_1^p,x_2^p,0\} + b_2 \max \{-x_1^p,-x_2^p,0\})^{1/p} \\
  &= h_{a_2^{1/p}T^2 +_p (b_2)^{1/p} (-T^2) } (x_1 e_1 + x_2 e_2).
\end{align*}
Hence $K|\mathbb{R}^2 = a_2^{1/p}T^2 +_p a_2^{1/p} T^2$. If $a_2^{1/p} e_1 \notin K$, then $K$ must contain a point $a_2^{1/p} e_1 + \alpha e_3$, $\alpha \neq 0$. However, by similar arguments, the orthogonal projection of $K$ onto the linear space spanned by $\{e_1, e_3 \}$ is $a_2^{1/p} [o,e_1,e_3] +_p (b_2)^{1/p} (-[o,e_1,e_3])$. This is a contradiction since $a_2^{1/p} e_1 + \alpha e_3 \notin a_2^{1/p} [o,e_1,e_3] +_p (b_2)^{1/p} (-[o,e_1,e_3])$ when $p>1$. Hence $a_2^{1/p} e_1 \in K$.
Together with Proposition \ref{pro5.1}, we have
\begin{align*}
  a_2^{1/p} = a_2^{1/p} e_1 \cdot (e_1+e_2+e_3) \leq h_{K} (e_1 + e_2 +e_3) = a_1^{1/p}.
\end{align*}

For $n \geq 4$ and $p = 1$, we use $[-e_1,e_1,\dots,e_4]$ to show that $a_2 \leq a_1$.

Setting $d=4$, $m=1$, $v_0 = -e_1$ in (\ref{90}), we have
\begin{align*}
&\Phi_{1;a_1,a_2} ([-e_1,e_1,\dots,e_4]) \left(\begin{array}{c} 1 \\ 3 \\ 3 \\2 \end{array}\right)
+ \Phi_{1;b_1,b_2} (-[-e_1,e_1,\dots,e_4]) \left(\begin{array}{c} 1 \\ 3 \\3 \\ 2 \end{array}\right) \\
&=\Phi_{1;a_1,a_2} ([-e_1,e_1,\dots,e_4]) \left(\begin{array}{c} 1 \\ 3 \\ 2 \\3 \end{array}\right)
+ \Phi_{1;b_1,b_2} (-[-e_1,e_1,\dots,e_4]) \left(\begin{array}{c} 1 \\ 3 \\2 \\ 3 \end{array}\right) \\
&= 3 a_2 + 2 (a_2-a_1) - (a_2-a_1) + b_2,
\end{align*}
and
\begin{align*}
&\Phi_{1;a_1,a_2} ([-e_1,e_1,\dots,e_4]) \left(\begin{array}{c} 2 \\ 6 \\ 5 \\5 \end{array}\right)
+ \Phi_{1;b_1,b_2} (-[-e_1,e_1,\dots,e_4]) \left(\begin{array}{c} 2 \\ 6 \\5 \\ 5 \end{array}\right) \\
&= 6 a_2 + 5 (a_2-a_1) - 2(a_2-a_1) + 2b_2.
\end{align*}
Also since $\Phi_{1;a_1,a_2} ([-e_1,e_1,\dots,e_4])+ \Phi_{1;b_1,b_2} (-[-e_1,e_1,\dots,e_4])$ is sublinear, we have
$$5(a_2-a_1) \leq 4(a_2-a_1).$$
Hence $a_2 \leq a_1$.

The proof for the restrictions on $b_1,b_2$ is similar.

Finally, for $p=1$, $n=3$, since $h_{M_p^+ T^2} = h_{M_p^- T^2} =0$, $\Phi_{p;a_1,a_2} T^2 + \Phi_{p;b_1,b_2} (-T^2)$ is sublinear.
Also, for $i=1,2$, Proposition \ref{pro5.1} shows that
\begin{align*}
\Phi_{p;a_1,a_2} T^2 (e_i) &+ \Phi_{p;b_1,b_2} (-T^2)(e_i) = a_2, \\
\Phi_{p;a_1,a_2} T^2 (-e_i) &+ \Phi_{p;b_1,b_2} (-T^2)(-e_i) = b_2, \\
\Phi_{p;a_1,a_2} T^2 (e_1+e_2) &+ \Phi_{p;b_1,b_2} (-T^2)(e_1+e_2) = a_1, \\
\Phi_{p;a_1,a_2} T^2 (-e_1-e_2) &+ \Phi_{p;b_1,b_2} (-T^2)(-e_1-e_2) = b_1.
\end{align*}
Hence
\begin{align*}
a_2 &= \Phi_{p;a_1,a_2} T^2 (e_1) + \Phi_{p;b_1,b_2} (-T^2)(e_1) \\
 &\leq \Phi_{p;a_1,a_2} T^2 (e_1+e_2) + \Phi_{p;b_1,b_2} (-T^2)(e_1+e_2) + \Phi_{p;a_1,a_2} T^2 (-e_2) + \Phi_{p;b_1,b_2} (-T^2)(-e_2) \\
 &= a_1 + b_2,
\end{align*}
and
\begin{align*}
b_2 &= \Phi_{p;a_1,a_2} T^2 (-e_1) + \Phi_{p;b_1,b_2} (-T^2)(-e_1) \\
 &\leq \Phi_{p;a_1,a_2} T^2 (-e_1-e_2) + \Phi_{p;b_1,b_2} (-T^2)(-e_1-e_2)  + \Phi_{p;a_1,a_2} T^2 (e_2) + \Phi_{p;b_1,b_2} (-T^2)(e_2) \\
 &= b_1 + a_2.
\end{align*}
The proof is complete.
\end{proof}

Since $a_1= a_2$ and $b_1 = b_2$, we get
\begin{align*}
\Phi_{p;a_1,a_2} P = a_1 h_P^p, ~~ \Phi_{p;b_1,b_2} (-P) = b_1 h_{-P}^p
\end{align*}
for every $P \in \mathcal {P}_o ^n$.
Hence the proof is complete and the restrictions for $a_1,b_1,c_1,c_2$ are given by Lemma \ref{lem5.1}.
\end{proof}

\begin{proof}[\textbf{Proof of Theorem \ref{thm1.7}}]
First we show that $\Phi_{1;a_1,a_2}P + \Phi_{1;b_1,b_2}(-P)$ for $\dim P \leq 3$ is a \text{support} function (under the restrictions on $a_1,a_2,b_1,b_2$). We will use following two lemmas.

\begin{lem}\text{ \rm \cite[Lemma 3.2.9]{Sch14}} \label{sch1}
Let $K, L \in \mathcal {K}^n$. If $L|V$ is a summand of $K|V$, for all $2$-dimensional linear subspaces $V$ in some dense subset of $Gr(n, 2)$, then $L$ is a summand of $K$.
\end{lem}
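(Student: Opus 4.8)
The plan is to reduce the statement to a one-line fact about sublinear functions and then recover the global information from the two-dimensional hypothesis by a continuity argument. Recall that $L$ is a summand of $K$ exactly when $h_K-h_L$ is a support function, and that a continuous, positively homogeneous of degree one function $f$ on $\mathbb{R}^n$ is a support function exactly when it is sublinear, i.e.\ $f(x+y)\le f(x)+f(y)$ for all $x,y$. So I would set $f:=h_K-h_L$ (which is automatically continuous and positively homogeneous of degree one) and aim to prove that $f$ is subadditive; once this is established there is a convex body $M$ with $h_M=f$, hence $h_K=h_L+h_M=h_{L+M}$ and $K=L+M$, as desired.

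Next I would translate the hypothesis into a statement about $f$ itself. For any $V\in Gr(n,2)$ and any $x\in V$ one has $h_{K|V}(x)=h_K(x)$, since $x\cdot(z|V)=x\cdot z$ when $x\in V$; the same holds for $L$. Hence $h_{K|V}-h_{L|V}$ is precisely the restriction $f|_V$, and the assumption that $L|V$ is a summand of $K|V$ says exactly that $f|_V$ is a support function on $V$, i.e.\ $f$ is subadditive on $V$. By hypothesis this holds for all $V$ in a dense subset $\mathcal{D}\subset Gr(n,2)$.

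To finish, I would fix arbitrary $x,y\in\mathbb{R}^n$, choose any two-dimensional subspace $V_0$ containing both of them, and pick $V_k\in\mathcal{D}$ with $V_k\to V_0$ in $Gr(n,2)$. Writing $\pi_k$ for the orthogonal projection onto $V_k$, the continuous dependence of the orthogonal projection on its image subspace gives $\pi_k\to\pi_{V_0}$, so the points $x_k:=\pi_k x$ and $y_k:=\pi_k y$ lie in $V_k$ and satisfy $x_k\to\pi_{V_0}x=x$, $y_k\to y$, and $x_k+y_k\to x+y$. Subadditivity of $f|_{V_k}$ gives $f(x_k+y_k)\le f(x_k)+f(y_k)$, and letting $k\to\infty$ while using the continuity of $f$ on $\mathbb{R}^n$ yields $f(x+y)\le f(x)+f(y)$. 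Since $x,y$ were arbitrary, $f$ is sublinear, completing the argument.

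I do not expect a genuine obstacle; the only points needing a careful word are entirely routine, namely the identity $h_{K|V}(x)=h_K(x)$ for $x\in V$ and the continuous dependence of $\pi_V$ on $V\in Gr(n,2)$ (equivalently, that along $V_k\to V_0$ one can track points $x_k\in V_k$ with $x_k\to x$ for each $x\in V_0$). It is worth noting that density of $\mathcal{D}$ enters only through this last limiting step, which is precisely why the summand hypothesis may be weakened from all planes to a dense set of planes.
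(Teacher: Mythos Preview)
Your argument is correct. Note, however, that the paper does not give its own proof of this lemma: it is quoted verbatim from Schneider's book \cite[Lemma~3.2.9]{Sch14} and used as a black box in the proof of Theorem~\ref{thm1.7}. There is therefore nothing in the paper to compare against beyond the citation.

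For what it is worth, your approach is essentially the standard one: reduce ``$L$ is a summand of $K$'' to sublinearity of $f=h_K-h_L$, observe that $h_{K|V}=h_K|_V$ for $V\in Gr(n,2)$, and pass from subadditivity on a dense family of planes to subadditivity on all of $\mathbb{R}^n$ by continuity of $f$ and of the orthogonal projection $V\mapsto\pi_V$. The only step worth a remark is the case where $x,y$ are linearly dependent; then any $2$-plane through the line $\operatorname{lin}\{x,y\}$ serves as $V_0$, and the rest of the argument is unchanged.
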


\begin{lem}\text{ \rm \cite[Theorem 3.2.11]{Sch14}}\label{sch2}
Let $P, K \in \mathcal {K}^n$, where $P$ is a polytope. Then $P$ is a summand of $K$ if and only if $F(K, u)$ contains a translate of $F(P, u)$ whenever $F(P, u)$ is an edge of $P$ ($u \in S^{n-1}$).
\end{lem}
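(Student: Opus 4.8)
\emph{Proof plan.} One direction is immediate. If $K=P+M$ for some $M\in\mathcal K^n$, then $F(K,u)=F(P,u)+F(M,u)$ for every $u\in S^{n-1}$, so picking any point $m\in F(M,u)$ gives $m+F(P,u)\subseteq F(K,u)$; in particular $F(K,u)$ contains a translate of $F(P,u)$ whenever $F(P,u)$ is an edge of $P$.

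For the converse the plan is to reduce to the case $n=2$ and then invoke Lemma~\ref{sch1}. First I would show that there is an open dense set $\mathcal U\subseteq Gr(n,2)$ with the property that for every $V\in\mathcal U$ each edge of the polygon $P|V$ is the orthogonal projection onto $V$ of an edge of $P$. Indeed, if $u\in V\cap S^{n-1}$ is the outer normal of an edge $e'$ of $P|V$, then $F(P,u)|V=F(P|V,u)=e'$ is one-dimensional, so the face $F(P,u)$ of $P$ is at least one-dimensional; but the faces $G$ of $P$ with $\dim G\ge 2$ have normal cone of dimension at most $n-2$, and a $2$-plane in general position meets such a cone only at the origin, so for $V$ outside a finite union of lower-dimensional subsets of $Gr(n,2)$ the face $F(P,u)$ must be an edge of $P$. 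Fixing such a $V$, for $u\in V$ one has $F(K|V,u)=F(K,u)|V$ and $F(P|V,u)=F(P,u)|V$; hence, given an edge $e'$ of $P|V$ with normal $u$ and writing $G:=F(P,u)$ (an edge of $P$), the hypothesis provides $\tau$ with $\tau+G\subseteq F(K,u)$, and projecting gives $(\tau|V)+F(P|V,u)\subseteq F(K|V,u)$. Thus $P|V$ and $K|V$ satisfy the edge condition in the plane $V$. Granting the planar case below, $P|V$ is a summand of $K|V$ for all $V$ in a dense subset of $Gr(n,2)$, so Lemma~\ref{sch1} yields that $P$ is a summand of $K$.

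It then remains to treat $n=2$. Let $P$ be a convex polygon with edge normals $u_1,\dots,u_k$ and corresponding edge lengths $\ell_1,\dots,\ell_k$, and let $K\in\mathcal K^2$ be such that $F(K,u_i)$ contains a translate of the $i$-th edge of $P$ for each $i$. Then $F(K,u_i)$ contains a segment of length $\ell_i$ parallel to that edge, so $\operatorname{length}F(K,u_i)\ge\ell_i$, i.e.\ the surface area measure satisfies $S_K(\{u_i\})\ge\ell_i$. Since $S_P=\sum_i\ell_i\,\delta_{u_i}$ is purely atomic with atoms exactly at the $u_i$, it follows that $S_K-S_P\ge 0$ as a measure on $S^1$. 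By the classical description of planar support functions (a continuous $1$-homogeneous function $h$ on $\mathbb R^2$ is a support function if and only if $h''+h\ge 0$ distributionally on the circle, with $h_K''+h_K=S_K$ and $h_P''+h_P=S_P$), we obtain
\[
(h_K-h_P)''+(h_K-h_P)=S_K-S_P\ge 0,
\]
so $h_K-h_P$ is the support function of some $M\in\mathcal K^2$, and $h_{P+M}=h_K$ forces $K=P+M$. The degenerate cases $\dim P\le 1$ are either trivial (a point is a summand of every body) or covered by the same reduction (if $P$ is a segment, $P|V$ is again a segment for generic $V$ and the edge condition descends).

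The step I expect to be the main obstacle is the transversality claim in the reduction to the plane: verifying that for $V$ in an open dense subset of $Gr(n,2)$ the edges of $P|V$ are precisely the projections of those edges of $P$ whose normal cones meet $V$, with the corresponding normal directions lying in the relative interiors of the right normal cones. This is a general-position statement about how a generic $2$-plane cuts the normal fan of $P$; it is routine but must be set up with some care. Once it is in place, the planar case is short, and an alternative to invoking the curvature characterization there is to argue directly in $\mathbb R^2$ by comparing the one-sided derivatives of $h_K-h_P$ along the unit circle.
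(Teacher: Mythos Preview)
The paper does not prove this lemma; it is quoted verbatim from Schneider's book \cite[Theorem~3.2.11]{Sch14} and used as a black box in the proof of Theorem~\ref{thm1.7}. So there is no proof in the paper to compare against.

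That said, your plan is correct and is in fact the route Schneider himself takes: reduce to the planar case via Lemma~\ref{sch1} by showing that for a generic $2$-plane $V$ every edge of $P|V$ is the projection of an edge of $P$ (your normal-cone dimension count is the right argument), verify that the edge condition descends under projection, and then settle $n=2$ by observing that the hypothesis forces $S_K(\{u_i\})\ge S_P(\{u_i\})$ at each edge normal of $P$, whence $S_K-S_P\ge 0$ and $h_K-h_P$ is sublinear. The transversality step you single out as the main obstacle is indeed the only place where some bookkeeping is needed, but your outline of it is accurate.
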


Now let $P \in \mathcal {P}_o^3$. If $o \in \text{relint}~{P}$, then there is nothing to prove. Assume $o \in \text{relbd}~P$.

First let $\dim P =3$.
Notice that
\begin{align*}
\Phi_{1;a_1,a_2}P + \Phi_{1;b_1,b_2}(-P) &= a_1 h_P + (a_2-a_1) \sum_{F \in \mathcal {F}_o (P)}h_{F} - (a_2-a_1) \sum_{E \in \mathcal {E}_o (P)}h_{E} \\
&\qquad + b_1 h_{-P} + (b_2-b_1) \sum_{F \in \mathcal {F}_o (P)}h_{-F} - (b_2-b_1) \sum_{E \in \mathcal {E}_o (P)}h_{-E}
\end{align*}
is a support function if and only if $(a_2-a_1) \sum_{E \in \mathcal {E}_o (P)} E + (b_2-b_1) \sum_{E \in \mathcal {E}_o (P)} (-E) =:P_1$ is a summand of $a_1 P + (a_2-a_1) \sum_{F \in \mathcal {F}_o (P)} F + b_1 (-P) + (b_2-b_1) \sum_{F \in \mathcal {F}_o (P)} (-F)=:P_2$.
According to Lemma \ref{sch1} and \ref{sch2}, it is sufficient to show that $F(P_2|V,u)$ contains a translate of $F(P_1|V,u)$ for all $V$ in a dense set of $Gr(n,2)$, whenever $F(P_1|V,u)$ is an edge of $P_1 |V$. Here and in the following $u \in S^{n-1} \cap V$. Also we can assume that for different edges $E_1,E_2 \in \mathcal {E}_o (P)$, $E_1|V$ and $E_2|V$ does not lie on the same line.

Let $m$ be the cardinality of the set $\mathcal {F}_o (P)$. Since the pointwise limit of a support function is a support function, it does not change the desired result. Thus we can assume that every face in $\mathcal {F}_o (P)$ has two edges containing the origin.
Also every edge in $\mathcal {E}_o (P)$ belongs to two faces in $\mathcal {F}_o (P)$. Hence $P$ also has $m$ edges through the origin.
Now we can write $\mathcal {F}_o (P)=\{F_i\}_{i=1}^m$ and $\mathcal {E}_o (P)=\{E_i\}_{i=1}^m$ such that $E_i \subset F_i \cap F_{i+1}$ for any $1 \leq i \leq n$. Here we set $F_{m+1} = F_1$.

Since
\begin{align*}
&P_1|V = (a_2-a_1) \sum_{i=1}^m E_i|V + (b_2-b_1) \sum_{i=1}^m (-E_i|V), \\
&P_2|V = a_1 P|V + (a_2-a_1) \sum_{i=1}^m F_i|V + b_1 (-P|V) + (b_2-b_1) \sum_{i=1}^m (-F_i|V),
\end{align*}
and $F(K+L,u)=F(K,u) + F(L,u)$ for $K,L \in \mathcal {K}^n$, we only need to show that if $F(E_i|V,u)$ is a non-degenerate interval (hence $F(E_i|V,u) = E_i|V$), then $F(P_2|V,u)$ contains a translate of $(a_2-a_1) E_i|V + (b_2-b_1) (-E_i|V)$. We need to deal with two cases: \\
(i) $E_i|V$ is contained in the boundary of $P|V$, \\
(ii) the relative interior of $E_i|V$ is contained in the relative interior of $P|V$.

In case (i), $u$ is an outer normal vector of $P|V$ or an inner normal vector of $P|V$. If $u$ is an outer normal vector of $P|V$, then $E_i|V$ is contained in $F(F_i|V,u)$, $F(F_{i+1}|V,u)$ and $F(P|V,u)$.
Hence $(a_2-a_1)F(F_i|V,u) + (a_2-a_1) F(F_{i+1}|V,u) + a_1 F(P|V,u)$ contains a translate of $(a_2-a_1) E_i|V + (b_2-b_1) (-E_i|V)$ since $b_2-b_1 \leq a_2$. Also since $F(P_2|V,u)$ contains a translate of $(a_2-a_1)F(F_i|V,u) + (a_2-a_1) F(F_{i+1}|V,u) + a_1 F(P|V,u)$, we have that $F(P_2|V,u)$ contains a translate of $(a_2-a_1) E_i|V + (b_2-b_1) (-E_i|V)$. \\
If $u$ is an inner normal vector of $P|V$, then $E_i|V$ is contained in
$- F(-F_i|V,u)$, $-F(-F_{i+1}|V,u)$ and $-F(-P|V,u)$.
Similarly $F(P_2|V,u)$ contains a translate of $(b_2-b_1)F(-F_i|V,u) + (b_2-b_1) F(-F_{i+1}|V,u) + b_1 F(-P|V,u)$ which contains a translate of $(a_2-a_1) E_i|V + (b_2-b_1) (-E_i|V)$ since $a_2-a_1 \leq b_2$.

In case (ii), $E_i|V$ is contained in $F(F_i|V,u) \cap F(F_{i+1}|V,-u)$ or $F(F_i|V,-u) \cap F(F_{i+1}|V,u)$. Hence $F(P_2|V,u)$ contains a translate of
$(a_2-a_1) E_i|V + (b_2-b_1) (-E_i|V)$.

The proof for $\dim P =2$ is similar (and easier). For $\dim P=1$, there is nothing to prove.

Now we turn to the necessity. Since $P \mapsto h_{ZP}$ satisfies the conditions of Lemma \ref{lem5.4}, there exist constants $a_1,a_2,b_1,b_2,c_1,c_2 \in \mathbb{R}$ such that
\begin{align*}
h_{ZP}= c_1 h_{M^+ P}  + c_2h_{M^- P} + \Phi_{1;a_1,a_2}P + \Phi_{1;b_1,b_2}(-P)
\end{align*}
for every $P \in \mathcal {P}_o ^n$. The restrictions for $a_1,a_2,b_1,b_2,c_1,c_2$ are given by Lemma \ref{lem5.1}.
\end{proof}

If we just consider valuations defined on $\mathcal {T}_o^n$, then $D_{a_1,a_2,b_1,b_2}$ is a valuation even for $n \geq 4$.

\begin{thm}\label{thm1.5}
Let $n \geq 3$. The map $Z : \mathcal{T}_o ^n \to \mathcal {K}_o ^n$ is an $\sln$ covariant Minkowski valuation if and only if
there exist constants $a_1,a_2,b_1,b_2,c_1,c_2 \geq 0$ satisfying $a_1 \leq a_2$, $b_1 \leq b_2$, $a_2-a_1 \leq b_2$ and $b_2-b_1 \leq a_2$ such that
$$ZT = c_1 M^+ T + c_2 M^- T + D_{a_1,a_2,b_1,b_2} T$$
for every $T \in \mathcal{T}_o ^n$,
where
$$D_{a_1,a_2,b_1,b_2} T = [a_2 v_i - b_2 v_j, a_2v_i -(a_2 -a_1)v_j, (b_2-b_1)v_i - b_2 v_j : 1 \leq i,j \leq d]$$
for $T = [o,v_1,\dots,v_d]$, $2 \leq d \leq n$, and
$$D_{a_1,a_2,b_1,b_2} T = [-b_1v_1,a_1v_1]$$
for $T = [o,v_1]$. Here $o,v_1,\dots,v_d \in \mathbb{R}^n$ are affinely independent.
\end{thm}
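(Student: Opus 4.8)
The plan is to pass to the function-valued picture of Section~\ref{Slp}: an $\sln$ covariant Minkowski valuation $Z$ on $\mathcal{T}_o^n$ corresponds to the $\sln$ covariant function-valued valuation $\Phi := h_{Z(\cdot)} : \mathcal{T}_o^n \to C_1(\mathbb{R}^n)$, since $h_{\phi K}(x) = h_K(\phi^t x)$ and $h_{K+_1L}=h_K+h_L$. The first thing I would establish is a support-function identity for $D_{a_1,a_2,b_1,b_2}$: for constants $a_1,a_2,b_1,b_2 \geq 0$ with $a_1 \leq a_2$, $b_1 \leq b_2$, $a_2-a_1 \leq b_2$ and $b_2-b_1 \leq a_2$, the convex hull $D_{a_1,a_2,b_1,b_2}T$ in the statement is a convex body containing the origin (the latter because $a_1,b_1\geq 0$) and satisfies $h_{D_{a_1,a_2,b_1,b_2}T}=\Phi_{1;a_1,a_2}T+\Phi_{1;b_1,b_2}(-T)$ on $\lin T$. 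Both sides transform the same way under $\gln$ (the hull by $D_{a_1,a_2,b_1,b_2}(\phi T)=\phi D_{a_1,a_2,b_1,b_2}T$, and $\Phi_{1;\cdot}$ by Lemma~\ref{lem5.3}), so it suffices to treat $T=T^d$. There, for $x=(x_1,\dots,x_d)^t$, using $a_2,b_2,a_2-a_1,b_2-b_1\geq 0$ one computes
\begin{align*}
h_{D_{a_1,a_2,b_1,b_2}T^d}(x)=\max\{a_2\beta_1-b_2\beta_2,\ a_2\beta_1-(a_2-a_1)\beta_2,\ (b_2-b_1)\beta_1-b_2\beta_2\}
\end{align*}
with $\beta_1=\max_i x_i$, $\beta_2=\min_i x_i$, and compares this with the value of $\Phi_{1;a_1,a_2}T^d+\Phi_{1;b_1,b_2}(-T^d)$ given by Proposition~\ref{pro5.1}. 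A short case analysis on the signs of $\beta_1,\beta_2$ (namely $\beta_1\geq 0\geq\beta_2$, $\beta_1\geq\beta_2\geq 0$, and $0\geq\beta_1\geq\beta_2$) shows the two agree, each of the four inequalities on the constants being used precisely to see that the relevant term of the maximum dominates the other two; the case $\dim T=1$ is immediate from $\Phi_{1;a_1,a_2}([o,v_1])=a_1h_{[o,v_1]}$ and $\Phi_{1;b_1,b_2}(-[o,v_1])=b_1h_{[o,-v_1]}$.

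Granting this, sufficiency is short. The map $D_{a_1,a_2,b_1,b_2}$ is a Minkowski valuation on $\mathcal{T}_o^n$: whenever $T_1,T_2,T_1\cup T_2,T_1\cap T_2\in\mathcal{T}_o^n$, applying Lemma~\ref{lem5.3} to $\Phi_{1;a_1,a_2}$ and to $P\mapsto\Phi_{1;b_1,b_2}(-P)$ yields $h_{D_{a_1,a_2,b_1,b_2}(T_1\cup T_2)}+h_{D_{a_1,a_2,b_1,b_2}(T_1\cap T_2)}=h_{D_{a_1,a_2,b_1,b_2}T_1}+h_{D_{a_1,a_2,b_1,b_2}T_2}$, hence the $+_1$-additivity; and $\sln$ covariance is clear from the convex-hull formula. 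Since $M^+,M^-$ are $\sln$ covariant Minkowski valuations and $c_1,c_2\geq 0$, the positive combination $c_1M^++c_2M^-+D_{a_1,a_2,b_1,b_2}$ is an $\sln$ covariant Minkowski valuation on $\mathcal{T}_o^n$.

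For necessity, let $Z$ be an $\sln$ covariant Minkowski valuation on $\mathcal{T}_o^n$ and $\Phi=h_{Z(\cdot)}$. By the inclusion-exclusion principle, $\Phi$ extends to an $\sln$ covariant valuation $\widetilde\Phi:\mathcal{P}_o^n\to C_1(\mathbb{R}^n)$, since every $P\in\mathcal{P}_o^n$ lies in some $\mathcal{P}_i$ and is thus a finite union of simplices from $\mathcal{T}_o^n$. Because $Z(sT^n)\in\mathcal{K}_o^n$ contains the origin, $s\mapsto\widetilde\Phi(sT^n)(y)=h_{Z(sT^n)}(y)\geq 0$, so $\widetilde\Phi$ satisfies the hypotheses of Lemma~\ref{lem5.4} and there are constants $a_1,a_2,b_1,b_2,c_1,c_2\in\mathbb{R}$ with
\begin{align*}
h_{ZT}=c_1h_{M^+T}+c_2h_{M^-T}+\Phi_{1;a_1,a_2}T+\Phi_{1;b_1,b_2}(-T),\qquad T\in\mathcal{T}_o^n.
\end{align*}
To pin down the constants I would rerun the relevant parts of the proof of Lemma~\ref{lem5.1}, keeping track of which bodies they use: evaluating on $T^1$ (where the $M^\pm$-terms vanish) gives $a_1=h_{ZT^1}(e_1)\geq 0$ and $b_1=h_{ZT^1}(-e_1)\geq 0$; the dilation argument on $\alpha T^n$ gives $c_1,c_2\geq 0$; and the $p=1$, $n=3$ part of that proof uses only $T^2$ and $T^3$ together with their dilates, all of which lie in $\mathcal{T}_o^n$ for every $n\geq 3$, and the sublinearity of $h_{ZT^2}$ and of $\lim_{\alpha\to 0^+}\alpha^{-1}h_{Z(\alpha T^3)}$ (a limit of support functions, the $M^\pm$-contributions scaling like $\alpha^{n+1}$ and dropping out). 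This produces $a_1\leq a_2$, $b_1\leq b_2$, $a_2-a_1\leq b_2$, $b_2-b_1\leq a_2$. Finally the identity above rewrites, via the support-function identity for $D_{a_1,a_2,b_1,b_2}$, as $h_{ZT}=h_{c_1M^+T+c_2M^-T+D_{a_1,a_2,b_1,b_2}T}$, whence $ZT=c_1M^+T+c_2M^-T+D_{a_1,a_2,b_1,b_2}T$.

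The technical core of the argument is the first step: verifying that the four inequalities on $a_1,a_2,b_1,b_2$ are exactly what make $\Phi_{1;a_1,a_2}T+\Phi_{1;b_1,b_2}(-T)$ coincide with the support function of the prescribed convex hull on every simplex. The conceptual point in the necessity step is that, in contrast with the case $n\geq 4$ of Lemma~\ref{lem5.1}, the pentatope $[-e_1,e_1,e_2,e_3,e_4]$ which forced $a_1=a_2$ there is not a member of $\mathcal{T}_o^n$ (the origin lies in the relative interior of one of its edges), so on $\mathcal{T}_o^n$ only the weaker $n=3$ constraints survive; this is precisely why $D_{a_1,a_2,b_1,b_2}$ persists as a valuation on $\mathcal{T}_o^n$ for all $n\geq 3$.
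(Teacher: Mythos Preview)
Your argument is correct and matches the paper's proof in structure: both verify the support-function identity $h_{D_{a_1,a_2,b_1,b_2}T^d}=\Phi_{1;a_1,a_2}T^d+\Phi_{1;b_1,b_2}(-T^d)$ via the same case split on the signs of $\max_i x_i$ and $\min_i x_i$, deduce sufficiency from Lemma~\ref{lem5.3}, and obtain necessity from Lemma~\ref{lem5.4} together with the constraint computations of Lemma~\ref{lem5.1} (which, as you correctly note, use only $T^2$, $T^3$ and dilates of $T^n$, all in $\mathcal{T}_o^n$). The one point of divergence is that you first extend $\Phi$ to $\mathcal{P}_o^n$ by inclusion--exclusion and then invoke Lemma~\ref{lem5.4} as a black box, whereas the paper simply remarks that the \emph{proof} of Lemma~\ref{lem5.4} uses only simplices $T^d$, $\hat T^{d-1}$ and their $\sln$ images and therefore goes through verbatim on $\mathcal{T}_o^n$; the paper's route is a bit more direct and sidesteps the need to check that your extension is well-defined and again a valuation (which is true, via \cite{LR2006elementary}, but is not quite the content of the inclusion--exclusion principle as stated in the paper).
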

\begin{proof}
First, we show that the support function of $D_{a_1,a_2,b_1,b_2} T$ defined in this theorem is $\Phi_{1;a_1,a_2} T + \Phi_{1;b_1,b_2} (-T)$
if $a_1,a_2,b_1,b_2$ satisfy all the conditions. Since $D_{a_1,a_2,b_1,b_2}$ and $\Phi_{1;a_1,a_2}$ are both $\gln$ covariant,
we only need to show that
\begin{align*}
h_{D_{a_1,a_2,b_1,b_2} T^d} (y) = \Phi_{1;a_1,a_2} T^d (y) + \Phi_{1;b_1,b_2} (-T^d) (y)
\end{align*}
for $y \in \mathbb{R}^n$. But from the definition of $D_{a_1,a_2,b_1,b_2}$ and from Lemma \ref{lem4.1} and Lemma \ref{lem5.3}, we have
\begin{align*}
h_{D_{a_1,a_2,b_1,b_2} T^d} (y | \mathbb{R}^d) &= h_{D_{a_1,a_2,b_1,b_2} T^d} (y), \\
\Phi_{1;a_1,a_2} T^d(y) + \Phi_{1;b_1,b_2} (-T^d) (y) &= \Phi_{1;a_1,a_2} T^d(y | \mathbb{R}^d) + \Phi_{1;b_1,b_2} (-T^d) (y | \mathbb{R}^d).
\end{align*}
Combined with the $\gln$ covariance of $D_{a_1,a_2,b_1,b_2}, \Phi_{1;a_1,a_2}$ again, we only need to show that
\begin{align}\label{94}
h_{D_{a_1,a_2,b_1,b_2} T^d} (x) = \Phi_{1;a_1,a_2} T^d(x) + \Phi_{1;b_1,b_2} (-T^d) (x)
\end{align}
for $x = (x_1,\dots,x_d)^t \in \mathbb{R}^d$ with $x_1 \geq \dots \geq x_d$. A simple calculation shows that
\begin{align}\label{92}
h_{D_{a_1,a_2,b_1,b_2} T^d} (x) &= \max_{1\leq i,j \leq d} \{a_2 x_i - b_2 x_j, a_2x_i -(a_2 -a_1)x_j, (b_2-b_1)x_i - b_2 x_j \} \nonumber \\
& = \max \{a_2 x_1 - b_2 x_d, a_2x_1 -(a_2 -a_1)x_d, (b_2-b_1)x_1 - b_2 x_d \}.
\end{align}
Also Proposition \ref{pro5.1} shows that
\begin{align}\label{93}
&\Phi_{1;a_1,a_2} (T^d) (x) + \Phi_{1;b_1,b_2} (-T^d)(x) \nonumber \\
=&a_2 \max \{x_1,0\} - (a_2-a_1) \max \{x_d,0\} + b_2 \max \{-x_d,0\}- (b_2-b_1) \max \{-x_1,0\}.
\end{align}
For all the three cases $0 \geq x_1 \geq x_d$, $x_1 \geq 0 \geq x_d$ and $x_1 \geq x_d \geq 0$, the right side of (\ref{92}) and (\ref{93}) is equal. Hence, (\ref{94}) holds true.

Since $T \mapsto c_1 h_{M^+ T} + c_2h_{M^- T} + \Phi_{1;a_1,a_2} T  + \Phi_{1;b_1,b_2} (-T)$ is a valuation so is $T \mapsto c_1 M^+ T + c_2 M^- T + D_{a_1,a_2,b_1,b_2} T$. The proof of the sufficient part is complete.

Next we turn to the necessity. Since $T \mapsto h_{ZT}$ satisfies the conditions of Lemma \ref{lem5.4}, there exist constants $a_1,a_2,b_1,b_2,c_1,c_2 \in \mathbb{R}$ such that
\begin{align*}
h_{ZT} = c_1 h_{M^+ T}  + c_2h_{M^- T} + \Phi_{1;a_1,a_2}T + \Phi_{1;b_1,b_2}(-T)
\end{align*}
for every $T \in \mathcal {T}_o ^n$ (Although the domain of the valuation is just $\mathcal{T}_o ^n$ not $\mathcal{P}_o ^n$, we still can get this result from the proof of Lemma \ref{lem5.4}). The restrictions on $a_1,a_2,b_1,b_2,c_1,c_2$ are given by Lemma \ref{lem5.1}.
\end{proof}

\section*{Acknowledgement}
The work of the authors was supported, in part, by the \text{National} \text{Natural} Science \text{Foundation} of China (11271244) and Shanghai Leading Academic Discipline Project (S30104). The first author was also supported by China Scholarship Council (CSC 201406890044).



\begin{thebibliography}{10}
\providecommand{\url}[1]{\texttt{#1}}
\providecommand{\urlprefix}{URL }
\expandafter\ifx\csname urlstyle\endcsname\relax
  \providecommand{\doi}[1]{doi:\discretionary{}{}{}#1}\else
  \providecommand{\doi}{doi:\discretionary{}{}{}\begingroup
  \urlstyle{rm}\Url}\fi

\bibitem{abardia2013m}
\textit{J.~Abardia}, Minkowski valuations in a 2-dimensional complex vector
  space, Int. Math. Res. Not. \textbf{2015} (2015), 1247--1262.

\bibitem{abardia2011p}
\textit{J.~Abardia} and \textit{A.~Bernig}, Projection bodies in complex vector
  spaces, Adv. Math. \textbf{227} (2011), no.~2, 830--846.

\bibitem{Hab12b}
\textit{C.~Haberl}, Minkowski valuations intertwining with the special linear
  group, J. Eur. Math. Soc. \textbf{14} (2012), no.~5, 1565--1597.

\bibitem{HS09b}
\textit{C.~Haberl} and \textit{F.~Schuster}, {Asymmetric affine~$L_p$ Sobolev
  inequalities}, J. Funct. Anal. \textbf{257} (2009), no.~3, 641--658.

\bibitem{HS09a}
\textit{C.~Haberl} and \textit{F.~Schuster}, {General~$L_p$ affine
  isoperimetric inequalities}, J. Differential Geom. \textbf{83} (2009), no.~1,
  1--26.

\bibitem{Lud02b}
\textit{M.~Ludwig}, Projection bodies and valuations, Adv. Math. \textbf{172}
  (2002), no.~2, 158--168.

\bibitem{Lud03}
\textit{M.~Ludwig}, Ellipsoids and matrix-valued valuations, Duke Math. J.
  \textbf{119} (2003), no.~1, 159--188.

\bibitem{Lud05}
\textit{M.~Ludwig}, Minkowski valuations, Trans. Amer. Math. Soc. \textbf{357}
  (2005), no.~10, 4191--4213.

\bibitem{Lud10b}
\textit{M.~Ludwig}, Minkowski areas and valuations, J. Differential Geom.
  \textbf{86} (2010), no.~1, 133--161.

\bibitem{Lud12}
\textit{M.~Ludwig}, {Valuations on Sobolev spaces}, Amer. J. Math. \textbf{134}
  (2012), 824--842.

\bibitem{LR2006elementary}
\textit{M.~Ludwig} and \textit{M.~Reitzner}, Elementary moves on
  triangulations, Discrete Comput. Geom. \textbf{35} (2006), no.~4, 527--536.

\bibitem{Lut93}
\textit{E.~Lutwak}, {The Brunn-Minkowski-Firey theory I: Mixed volumes and the
  Minkowski problem}, J. Differential Geom. \textbf{38} (1993), no.~1,
  131--150.

\bibitem{Lut96}
\textit{E.~Lutwak}, {The Brunn-Minkowski-Firey theory II: Affine and geominimal
  surface area}, Adv. Math. \textbf{118} (1996), no.~2, 244--294.

\bibitem{LYZ00a}
\textit{E.~Lutwak}, \textit{D.~Yang} and \textit{G.~Zhang}, {$L_p$ affine
  isoperimetric inequalities}, J. Differential Geom. \textbf{56} (2000), no.~1,
  111--132.

\bibitem{LYZ00b}
\textit{E.~Lutwak}, \textit{D.~Yang} and \textit{G.~Zhang}, A new ellipsoid
  associated with convex bodies, Duke. Math. J. \textbf{104} (2000), no.~3,
  375--390.

\bibitem{LYZ02a}
\textit{E.~Lutwak}, \textit{D.~Yang} and \textit{G.~Zhang}, {Sharp affine $L_p$
  Sobolev inequalities}, J. Differential Geom. \textbf{62} (2002), no.~1,
  17--38.

\bibitem{LYZ04a}
\textit{E.~Lutwak}, \textit{D.~Yang} and \textit{G.~Zhang}, {On the
  $L_p$-Minkowski problem}, Tran. Amer. Math. Soc. \textbf{356} (2004), no.~11,
  4359--4370.

\bibitem{LYZ05}
\textit{E.~Lutwak}, \textit{D.~Yang} and \textit{G.~Zhang}, {$L_p$ John
  ellipsoids}, Proc. London Math. Soc. \textbf{90} (2005), no.~2, 497--520.

\bibitem{Ober2014}
\textit{M.~Ober}, {$L_p$-Minkowski valuations on $L^q$-spaces}, J. Math. Anal.
  Appl. \textbf{414} (2014), no.~1, 68--87.

\bibitem{Par14a}
\textit{L.~Parapatits}, {SL(n)-contravariant $L_p$-Minkowski valuations},
  Trans. Amer. Math. Soc. \textbf{366} (2014), no.~3, 1195--1211.

\bibitem{Par14b}
\textit{L.~Parapatits}, {SL(n)-covariant $L_p$-Minkowski valuations}, J. London
  Math. Soc. \textbf{89} (2014), no.~2, 397--414.

\bibitem{PS12}
\textit{L.~Parapatits} and \textit{F.~Schuster}, {The Steiner formula for
  Minkowski valuations}, Adv. Math. \textbf{230} (2012), no.~3, 978--994.

\bibitem{Sch14}
\textit{R.~Schneider}, {Convex Bodies: The Brunn-Minkowski Theory}, Cambridge
  University Press, Cambridge, 2014, 2nd edition.

\bibitem{SS06}
\textit{R.~Schneider} and \textit{F.~Schuster}, {Rotation equivariant Minkowski
  valuations}, Int. Math. Res. Not. \textbf{Art. ID 72894} (2006), 1--20.

\bibitem{Sch08}
\textit{F.~Schuster}, {Valuations and Busemann-Petty type problems}, Adv. Math.
  \textbf{219} (2008), no.~1, 344--368.

\bibitem{Sch2010}
\textit{F.~Schuster}, {Crofton measures and Minkowski valuations}, Duke Math.
  J. \textbf{154} (2010), no.~1, 1--30.

\bibitem{SW12}
\textit{F.~Schuster} and \textit{T.~Wannerer}, {$GL(n)$ contravariant Minkowski
  valuations}, Trans. Amer. Math. Soc. \textbf{364} (2012), no.~2, 815--826.

\bibitem{Tsa12}
\textit{A.~Tsang}, {Minkowski valuations on $L^p$-spaces}, Trans. Amer. Math.
  Soc. \textbf{364} (2012), no.~12, 6159--6186.

\bibitem{Wan11}
\textit{T.~Wannerer}, {GL(n) equivariant Minkowski valuations}, Indiana Univ.
  Math. J. \textbf{60} (2011), 1655--1672.

\end{thebibliography}
\end{document}